\newcounter{taggedeq}
\pretocmd{\equation}{\stepcounter{taggedeq}}{}{}
\newtheorem{theo}{Theorem}
\newtheorem{prop}[theo]{Proposition}
\newtheorem{proposition}[theo]{Proposition}
\newtheorem{lem}[theo]{Lemma}
\newtheorem*{conj}{Conjecture}
\theoremstyle{remark}
\newtheorem{rem}[theo]{Remark}
\makeatletter\@namedef{subjclassname@2020}{\textup{2020}
  Mathematics Subject Classification}\makeatother
\newcommand\N{{\mathbb N}}
\newcommand\R{{\mathbb R}}
\renewcommand\P{{\mathbb P}}
\newcommand{\dd}{\,\mathrm d}
\newcommand{\antiSigma}{A}
\newcommand{\wgt}{\lfloor\nabla\phi\rceil}
\newcommand{\bn}{{\bf n}}
\newcommand{\bu}{{\bf u}}
\renewcommand{\em}{\sl}
\def\MM{{\mathcal M}}
\def\RR{{\mathcal R}}
\def\nablaD{D}
\def\nablasym{D^s}
\def\nablaskew{D^a}
\def\set#1{\left\{#1\right\}}
\def\D{\partial}
\def\CRD{C_{\mathrm{\scriptscriptstyle RD}}}
\def\CSPK{C_{\mathrm{\scriptscriptstyle SPK}}}
\def\CRV{C_{\mathrm{\scriptscriptstyle RV}}}
\def\CRVL{C_{\mathrm{\scriptscriptstyle RVL}}}
\def\CRD{C_{\mathrm{\scriptscriptstyle RD}}}
\def\CRVZ{C_{\mathrm{\scriptscriptstyle RV0}}}
\def\CK{C_{\mathrm{\scriptscriptstyle K}}}
\def\CKZ{C_{\mathrm{\scriptscriptstyle K0}}}
\def\CPK{C_{\mathrm{\scriptscriptstyle PK}}}
\def\CPKZ{C_{\mathrm{\scriptscriptstyle PK0}}}
\def\CP{C_{\mathrm{\scriptscriptstyle P}}}
\def\CSP{C_{\mathrm{\scriptscriptstyle SP}}}
\def\CPL{C_{\mathrm{\scriptscriptstyle PL}}}
\def\CLPL{C_{\mathrm{\scriptscriptstyle LPL}}}
\def\CRPL{C_{\mathrm{\scriptscriptstyle RPL}}}
\def\CPKL{C_{\mathrm{\scriptscriptstyle PKL}}}
\def\CB{C_{\mathrm{\scriptscriptstyle B}}}
\def\Cphi{C_\phi}
\newcommand{\norm}[1]{\left\|#1\right\|}
\def\sep#1{\left(#1\right)}
\def\seq#1{\left<#1\right>}
\newcommand{\be}{\begin{equation}}
\newcommand{\ee}{\end{equation}}
\newcommand{\ba}{\begin{aligned}}
\newcommand{\ea}{\end{aligned}}
\newcommand{\beqn}{\begin{equation}}
\newcommand{\eeqn}{\end{equation}}
\newcommand{\bear}{\begin{eqnarray}}
\newcommand{\eear}{\end{eqnarray}}
\newcommand{\bean}{\begin{eqnarray*}}
\newcommand{\eean}{\end{eqnarray*}}
\def\eps{{\varepsilon}}
\def\ie{{\em i.e.}}
\def\eg{{\em e.g.}}
\newcommand{\VA}{\mathfrak{V}_\antiSigma}
\renewcommand{\(}{\left(}
\renewcommand{\)}{\right)}
\renewcommand\P{{\mathbb P}}
\newcommand{\Id}{\mathrm{Id}}
\def\Spec{\hbox{\rm Sp}}
\DeclareMathOperator{\Divphi}{div_{\phi}}
\def\ccc{{\mathcal C}}
\def\ddd{{\mathcal D}}
\def\fM{\mathfrak M}
\def\fP{\mathfrak{P}}
\newcommand{\mmm}{\mathcal M}
\def\abs#1{\left\vert#1\right\vert}
\newcommand{\nablaphi}{\nabla_{\!\phi}}
\newcommand*\circled[1]{\tikz[baseline=(char.base)]{\node[shape=circle,draw,inner sep=0.8pt] (char) {#1};}}
\definecolor{darkgreen}{rgb}{0,0.4,0}
\title[Weighted Korn and Poincar\'e-Korn inequalities and
associated operators]{Weighted Korn and Poincar\'e-Korn
  inequalities in the Euclidean space and associated operators}
\author[K.~Carrapatoso]{Kleber Carrapatoso}
\address[K.~Carrapatoso]{CMLS (CNRS UMR n$^\circ$ 7640), \'Ecole
  Polytechnique, Institut Polytechnique de Paris, 91128 Palaiseau
  Cedex, France}
\email{kleber.carrapatoso@polytechnique.edu}
\author[J.~Dolbeault]{Jean Dolbeault}
\address[J.~Dolbeault]{CEREMADE (CNRS UMR n$^\circ$ 7534), PSL
  university, Universit\'e Paris-Dauphine, Place de Lattre de
  Tassigny, 75775 Paris 16, France}
\email{dolbeaul@ceremade.dauphine.fr}
\author[F. H\'erau]{Fr\'ed\'eric H\'erau}
\address[F. H\'erau]{LMJL (CNRS UMR n$^\circ$ 6629), 2, rue de la
  Houssini\`ere, Universit\'e de Nantes BP 92208, F-44322 Nantes
  Cedex 3, France}
\email{frederic.herau@univ-nantes.fr}
\author[S.~Mischler]{St\'ephane Mischler}
\address[S.~Mischler]{CEREMADE (CNRS UMR n$^\circ$ 7534), PSL
  university, Universit\'e Paris-Dauphine, Place de Lattre de
  Tassigny, 75775 Paris 16, France}
\email{mischler@ceremade.dauphine.fr}
\author[C.~Mouhot]{Cl\'ement Mouhot}
\address[C.~Mouhot]{DPMMS,
  Center for Mathematical Sciences, University of Cambridge,
  Wilberforce Road, Cambridge CB3 0WA, UK}
\email{C.Mouhot@dpmms.cam.ac.uk}
\date{\today}
\subjclass[2020]{Primary:
  \href{https://mathscinet.ams.org/mathscinet/search/mscbrowse.html?sk=default&sk=49J40&submit=Chercher}{49J40}.
  Secondary:
  \href{https://mathscinet.ams.org/mathscinet/search/mscbrowse.html?sk=default&sk=46E35&submit=Chercher}{46E35},
  \href{https://mathscinet.ams.org/mathscinet/search/mscbrowse.html?sk=default&sk=49Q20&submit=Chercher}{49Q20}}
\keywords{Korn inequality; weighted Poincar\'e inequality;
  Poincar\'e-Korn inequality; Lions' lemma; Witten-Laplace
  operator; Grad's number}
\begin{document}

\begin{abstract}
  We prove functional inequalities on vector fields
  $u : \R^d \to \R^d$ when $\R^d$ is equipped with a bounded
  measure $e^{-\phi} \dd x$ that satisfies a Poincar\'e
  inequality, and study associated self-adjoint operators. The
  {\em weighted Korn inequality} compares the differential matrix
  $D u$, once projected orthogonally to certain
  finite-dimensional spaces, with its symmetric part $D^s u$ and,
  in an improved form of the inequality, an additional term
  $\nabla\phi\cdot u$. We also consider {\em Poincar\'e-Korn
    inequalities} for estimating a projection of $u$ by $D^s u$
  and zeroth-order versions of these inequalities obtained using
  the Witten-Laplace operator. The constants depend on geometric
  properties of the potential $\phi$ and the estimates are
  quantitative and constructive. These inequalities are motivated
  by kinetic theory and related with the {\em Korn inequality}
  (1906) in mechanics, which compares $D u$ and $D^s u$ on a
  bounded domain.
\end{abstract}

\maketitle
\thispagestyle{empty}
\vspace*{-0.5cm}

\section{Introduction and main results}
\label{Sec:intro}

\subsection{The problem at hand}

Korn's inequality~\cite{Kor06,Kor08,Kor09} is a classical tool in
continuum mechanics which asserts the control of the $L^2$ norm
of the gradient of a vector field defined on a smooth bounded
domain~$\Omega$ of $\R^d$ by the $L^2$ norm of its symmetric
part:
\begin{equation}\label{eq:KornOmega}
  \|\nablaD u\|_{L^2(\Omega)}^2 \leq 2\,\|\nablasym
  u\|_{L^2(\Omega)}^2,\quad \forall\,u \in C^2(\overline\Omega;
  \R^d) \ \text{ such that } \ u=0 \text{ on } \partial\Omega\,.
\end{equation}
If $\mathfrak M$ (resp.~$\mathfrak M^s$) is the set of
$d \times d$ real (resp.~symmetric) matrices, $D u$
(resp.~$D^s u$) is the differential of $u$ (resp.~its symmetric
part) and takes values in $\mathfrak M$ (resp.~$\mathfrak
M^s$). Written with cartesian coordinates, this means
\begin{equation*}
  (D^su)_{ij}=\frac12\( \partial_j u_i + \partial_i u_j\)
  \quad\mbox{and}\quad
  (D^au)_{ij}=\frac12\( \partial_j u_i - \partial_i u_j\).
\end{equation*}
We denote by $A^a \in \mathfrak M^a$ the skew-symmetric part of
$A \in \mathfrak M$ and by $A^s=A-A^a$ its symmetric part. The
original proof of~(\ref{eq:KornOmega}) in~\cite{Kor06} is
instructive: it is enough to integrate over $\Omega$ the
pointwise identities
\begin{equation}\label{eq:Pointwise1}
  |\nablaskew u|^2-|\nablasym u|^2+(\nabla\cdot u)^2-\nabla \cdot
  \Big[ u\,(\nabla\cdot u)-(u\cdot \nabla )\,u \Big]=0 \quad
  \text{ and }\quad |\nablaD u|^2=|\nablasym u|^2+|\nablaskew u|^2
\end{equation}
and use the boundary condition to get
$2\,\|\nablasym u\|_{L^2(\Omega)}^2=\|\nablaD
u\|_{L^2(\Omega)}^2+\|\nabla\cdot u\|_{L^2(\Omega)}^2$, where
$\nabla\cdot u$ denotes the divergence of $u$ and $L^2(\Omega)$
is the $L^2$ norm for matrix-valued, vector-valued or real-valued
functions.

The boundary condition $u=0$ is a severe restriction. In view of
applications in kinetic theory, Desvillettes and Villani
in~\cite{DV02} enlarge the set of possible vector fields to those
satisfying only $u\cdot \bn=0$ on the boundary, where $\bn$
denotes the outward normal unit vector to $\D\Omega$. The set of
{\em infinitesimal rotations}
$$
\RR:=\set{ R : x \in \R^d \mapsto \antiSigma\,x \in \R^d \ \text{with} \ \antiSigma\in\mathfrak M^a}
$$
is a family of vector fields which plays a key role. If $\Omega$
has some rotational invariance, then some infinitesimal rotations
satisfy the boundary condition while their symmetric differential
is zero. Being invariant under the action of a group of rotations
$t \to e^{tA}$ for a given $A \in \mathfrak M^a$ means that
\begin{equation*}
\forall\,t \in \R\,,\quad e^{tA}\,\Omega=\Omega
\end{equation*}
(here we suppose that $\Omega$ is invariant under rotations
centred at $0$ without loss of generality). Taking the derivative
with respect to $t$ shows that the set of {\em infinitesimal
  rotations preserving $\Omega$} is
\begin{equation*}
  \RR_\Omega :=\big\{ R\in\RR\,:\,\forall\,x\in\D \Omega\,,\;
  \bn(x)\cdot R(x)=0 \big\},
\end{equation*}
where we implicitly use the fact that skew-symmetric matrices
generate the tangent space of the orthogonal
group. In~\cite[Inequality~(38)]{DV02}, Desvillettes and Villani
state the following {\em Korn inequality}
\begin{equation}\label{eq:KornOmega2}
\inf_{R\in\RR_\Omega}\|\nablaD (u-R)\|^2_{L^2(\Omega)} \le C_\Omega\,\|D^s u\|^2_{L^2(\Omega)}\,,\quad \forall\,u \in C^2(\overline\Omega; \R^d) \ \text{ such that } \ u\cdot \bn=0 \text{ on } \partial\Omega\,,
\end{equation}
which takes into account invariances by rotation. They obtain
quantitative estimates on the constant
$C_\Omega$. Inequality~\eqref{eq:KornOmega2} is an important
ingredient in~\cite{Desvillettes-Villani-2005} to prove
hypocoercivity for the Boltzmann equation in $\Omega$.

\medskip In this article, our aim is to establish similar {\em
  Korn and related inequalities}, with constructive constants, in
the whole Euclidean space~$\R^d$ in presence of a {\em confining
  potential} $\phi : \R^d \to \R$, \ie, in the $L^2$ space with
reference measure $e^{-\phi(x)} \dd x$. Our motivation comes from
the hypocoercivity theory of kinetic operators with more than one
microscopic invariant studied in~\cite{CDHMMShypo}, but the
inequalities are of independent interest.

\subsection{Assumptions and notations}\label{subsec:notation}

We consider a potential \hbox{$\phi : \R^d \to \R$}, $d \ge 2$
satisfying the conditions:
\\[4pt]
\circled1 the measure $e^{-\phi(x)} \dd x$ is a centred
probability measure
\begin{equation}
  \label{hyp:intnorm} \tag{H1}
  \int_{\R^d}e^{-\phi(x)} \dd x=1\quad \mbox{and}\quad \int_{\R^d}
  x\,e^{-\phi(x)} \dd x=0\,,
\end{equation}
\\[4pt]
\circled2 the potential $\phi$ is of class $C^2(\R^d;\R)$ and,
for all $\eps>0$, there exist a constant $C_\eps$ such that
\begin{equation}
  \label{hyp:regularity} \tag{H2}
  \begin{array}{c}
    \forall\,x\in \R^d,\quad |D^2\phi(x)| \leq
    \eps\,|\nabla\phi(x)|^2+C_\eps\,,
  \end{array}
\end{equation}
\\[4pt]
\circled3 the measure $e^{-\phi} \dd x$ satisfies the Poincar\'e
inequality with constant $C_P$: for all scalar functions~$f$ in
the space $\ccc_c^\infty(\R^d;\R)$ of smooth functions with
compact support, we have
\begin{equation}
  \label{hyp:poincarebasique} \tag{H3}
  \int_{\R^d}\left|f(x)-\seq f\right|^2\,e^{-\phi(x)} \dd x\leq \CP
  \int_{\R^d} |\nabla f(x)|^2\,e^{-\phi(x)} \dd
  x\,,\quad\mbox{where}\quad\seq f:=\int_{\R^d}
  f(x)\,e^{-\phi(x)}\dd x\,.
\end{equation}
Assumption~\eqref{hyp:intnorm} is a classical integrability
condition on $\phi$. The fact that the center of mass
$\int_{\R^d} x\,e^{-\phi(x)} \dd x$ is finite is a consequence
of~\eqref{hyp:poincarebasique} applied with $f(x)=x_i$,
$i=1,\ldots,d$. There is no loss of generality in choosing
$\int_{\R^d} x\,e^{-\phi(x)} \dd
x=0$. Assumption~\eqref{hyp:regularity} is a regularity
assumption at infinity which, in the language of operator theory
and in a suitable functional framework, says that the
multiplication operator by $|D^2 \phi(x)|$ is {\em
  infinitesimally bounded} by the multiplication operator by
$|\nabla\phi(x)|^2$ (see~\cite[Chapter~X]{RS75}). Here no growth
assumption is made on
$|\nabla\phi|$. Assumption~\eqref{hyp:regularity} is satisfied
for instance if
\begin{equation*}
  \sup_{x \in \R^d} \frac{ D^2
    \phi(x)}{\sqrt{1+|\nabla\phi(x)|^2}} < \infty\quad
  \textrm{or}\quad \lim_{|x| \rightarrow \infty} \frac{ D^2
    \phi(x)}{1+|\nabla\phi(x)|^2}=0\,.
\end{equation*}
Assumption~\eqref{hyp:poincarebasique} can be interpreted as a
{\em measure concentration} property: it implies that
\begin{equation}
  \label{Concentration}
  \int_{\R^d} |\nabla\phi(x)
  |^2\,e^{-\phi(x)} \dd
  x<\infty\quad\mbox{and}\quad\forall\,k\in\N\,,\quad\int_{\R^d}
  |x|^{2k}\,e^{-\phi(x)} \dd x<\infty
\end{equation}
by~\eqref{hyp:regularity} for the first estimate and by an easy
induction (see Remark~\ref{rem:concentration} in
Appendix~\ref{Sec:Remarks}) for the
second~one. Assumptions~\eqref{hyp:intnorm},~\eqref{hyp:regularity}
and~\eqref{hyp:poincarebasique} are satisfied, for instance, if
$\phi\in C^2(\R^d;\R)$ and either
$\phi(x)=\alpha\,|x|^\gamma+\beta$ with $\gamma \ge 1$ or
$\phi(x)=\alpha\,e^{|x|^2}+\beta$, for large values of $|x|$,
where $\alpha>0$ and $\beta$ are two parameters. The three
assumptions are also satisfied by the {\em normalized Gaussian}
defined~by \be\label{Gaussian} \forall\,x \in \R^d,\quad
\phi(x)=\frac12\,|x|^2+\frac d2\,\ln(2\pi)\,.  \ee Associated
with $\phi$ and thanks to~\eqref{hyp:regularity}, there exists
two constants $\Cphi>8$ and $\Cphi'>\Cphi$ such that
\begin{equation}
  \label{eq:cphi}
  \forall\,x \in \R^d,\quad 4\,\sqrt d\,|D^2 \phi(x)|
  \leq |\nabla\phi(x)|^2+\Cphi -1
  \quad \mbox{and}\quad
  4\,|D^2 \phi(x)| \leq
  \Cphi^{-1/2}\,\big(|\nabla\phi(x)|^2+\Cphi'\big)\,.
\end{equation}

\medskip As in the case of a bounded domain, the set $\RR$ of
{\em infinitesimal rotations} plays a key role in the study of
Korn inequalities in the whole space. The symmetric differential
applied to an infinitesimal rotation is zero. In our setting, the
invariance under the action of a group of rotations
$t \to e^{tA}$ for a given $A \in \mathfrak M^a$ means
\begin{equation*}
  \label{eq:defrotinv}
  \forall\,t \in \R\,,\quad \forall\,x \in \R^d,\quad
  \phi\(e^{tA}x\)=\phi(x)\,,
\end{equation*}
where, again, we implicitly use the assumption that the measure
is centred. Differentiating the above identity with respect to
$t$ yields that the set of {\em infinitesimal rotations
  preserving $\phi$} is
\begin{equation*}
  \RR_\phi :=\big\{ R\in\RR\,:\,\forall\,x\in\R^d,\;\nabla\phi(x)
  \cdot R(x)=0 \big\}\,.
\end{equation*}
This set is a central geometric objet in our analysis. In the
inequalities, the addition of a term involving
$\nabla\phi\cdot R$ allows us to control the infinitesimal
rotations for which $\phi$ is {\em not} invariant, as we shall se
later.

\medskip In this article, we adopt the following conventions. We
denote by $|\cdot |$ the Euclidean norm in $\R$, $\R^d$
and~$\mathfrak M$, by $a\cdot b$ the scalar product of two
vectors in $\R^d$ and by $A : B$ the scalar product of two
matrices $A$ and~$B$ seen as vectors in $\R^{d^2}$. We denote by
$\|\cdot\|$ the $L^2$ norm corresponding to~$|\cdot |$ and weight
$e^{-\phi} \dd x$, and by $(\cdot,\cdot)$ the corresponding
scalar product, that is,
\begin{equation*}
  (f,f)=\|f\|^2=\int_{\R^d}|f(x)|^2\,e^{-\phi} \dd x
\end{equation*}
and we will refer to $L^2$ indifferently for functions with
values in $\R$, $\R^d$ and~$\mathfrak M$. We shall use
$\langle\cdot \rangle$ for the average (component by component)
according to the measure $e^{-\phi} \dd x$ of functions with
values in $\R$, $\R^d$ and~$\mathfrak M$. We use the notation
$\nabla$ for the gradient of scalar functions (with values in
$\R^d$) and $D$ for the gradient of vector fields (with values in
$\mathfrak M$). We denote by $H^1$ the space of functions $f$ or
(when there is no ambiguity) vector fields $u$ such that
respectively $f$ and $\nabla f$ or $u$ and $Du$ are in $L^2$. The
space $H^{-1}$ is the dual of $H^1$ with respect to the $L^2$
scalar product. The weight function $\wgt$ is defined by
$$
\wgt:=\sqrt{1+|\nabla\phi|^2}\,.
$$

Let $\P$ be the orthogonal projection of vector-valued functions
in $L^2$ onto $\RR$, and $\P_\phi$ the orthogonal projection onto
$\RR_\phi$. We denote by $\RR_\phi^\perp$ the orthogonal vector
space to $\RR_\phi$ in $L^2$ and
$\RR_\phi^c=\RR \cap \RR_\phi^\bot$ the restriction of the
orthogonal space to $\RR_\phi$ in $\RR$ or, in other words,
$\RR_\phi^c=\P\,\RR_\phi^\bot$. For instance if $\phi$ has {\em
  no} invariance by any rotation $e^{tA}$ then $\RR_\phi=\{ 0\}$,
and if $\phi$ is radially symmetric then $\RR_\phi=\RR$. Let
$\fP$ be the orthogonal projection of matrix-valued functions in
$L^2$ onto the set of constant antisymmetric matrices
$\fM^a=D\RR$. For all vector field $u \in H^1$, we have
\begin{equation}
  \label{ExplProj}
  \fP(Du)=\seq{D^a u}.
\end{equation}
We also denote by $\fP_\phi$ the $L^2$ orthogonal projector onto
$\fM_\phi:=D \RR_\phi$ and by $\fM_\phi^c$ the orthogonal of
$\fM_\phi$ in~$\fM^a$, {\em i.e.},
$\fM_\phi^c=\fP\fM_\phi^\bot$. The projections are summarised in
Figure~\ref{fig:diagram}. Note that $D \RR_\phi^c$ and
$\fM_\phi^c$ generically differ since the inner products
underlying the two orthogonal decomposition are different.

\begin{figure}[h]
  \includegraphics[width=7cm]{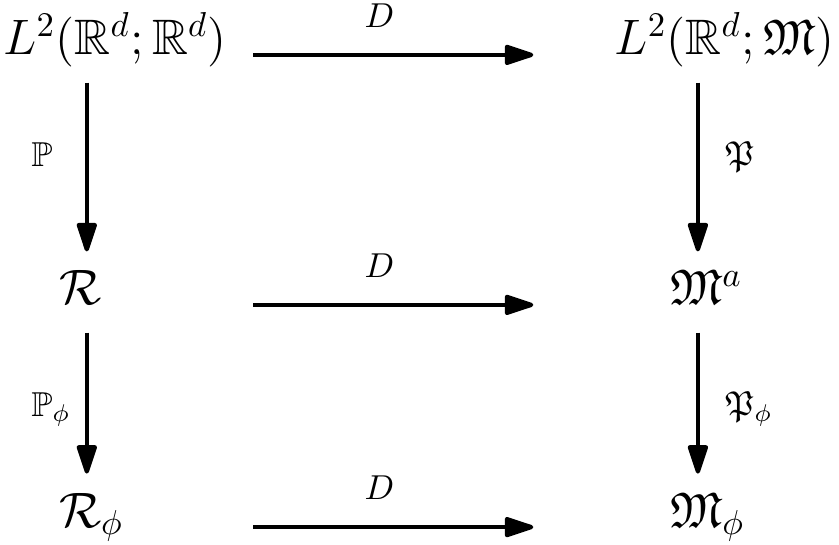}
  \label{fig:diagram}
  \caption{Representation of the orthogonal
    decompositions.}
\end{figure}

One additional notation will be used throughout this paper: if
$x$ and $y$ are two vectors in $\R^d$, we denote by $x \otimes y$
the matrix $(x_i\,y_j)_{1\le i,j\le d}$. Further details on
$D \RR_\phi^c$ and $\fM_\phi^c$ are collected in
Appendix~\ref{Appendix:B2}.

\subsection{Main results}\label{Sec:Main}

All inequalities in this paper are quantitative with explicit
estimates on the constants. The first result is the counterpart
of~\eqref{eq:KornOmega2} in the whole Euclidean space, with some
additional consequences based on Poincar\'e inequalities. The
statement involves the whole set of {\em infinitesimal rotations}
$\RR$.
\begin{theo}[Korn, Poincar\'e-Korn and strong Poincar\'e-Korn
  inequalities]
  \label{theo:KPK}
  Suppose~\eqref{hyp:intnorm}--\eqref{hyp:regularity}--\eqref{hyp:poincarebasique}. Then
  there are a Korn constant $\CK$, a Poincar\'e-Korn constant
  $\CPK $ and a strong Poincar\'e-Korn constant $\CSPK$ with
  explicit bounds involving only $\CP$, $\Cphi$ and $\Cphi'$ such
  that, for all $u\in H^1$,
  \begin{align}
    \label{eq:WKfull}
    & \inf_{R\in\RR}\|\nablaD (u-R)\|^2=\|\nablaD u-\fP (Du)\|^2
      \leq \CK\,\|D^s u\|^2,\\
    \label{eq:WPKfull}
    & \inf_{R\in\RR}\|u-\seq{u}-R\|^2=\|u-\seq{u}-\P (u)\|^2
      \leq \CPK\,\|D^s u\|^2,\\
    \label{eq:WPKstrong}
    & \big\|\wgt\,\big(u-\seq{u}-\P (u)\big)\big\|^2 \leq
      \CSPK\,\|D^s u\|^2.
  \end{align}
  Moreover in the Gaussian case~\eqref{Gaussian}, optimal
  constants are $\CK=4$, and $\CPK=2$.
\end{theo}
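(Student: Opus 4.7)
The plan is to base everything on a single integration-by-parts identity, a weighted analogue of Korn's classical identity~\eqref{eq:Pointwise1}. Starting from the pointwise algebraic equality $|D^s u|^2-|D^a u|^2=\sum_{i,j}(\partial_i u_j)(\partial_j u_i)$, I would integrate against $e^{-\phi}\,\dd x$ and perform two integrations by parts on the right-hand side, grouping the first-order contributions from $\nabla\phi$ into the square $(\nabla\cdot u-\nabla\phi\cdot u)^2$ and the second-order ones into the quadratic form $u\cdot D^2\phi\cdot u$. Combined with $|Du|^2=|D^s u|^2+|D^a u|^2$, this yields the core identity
\begin{equation*}
  2\,\|D^s u\|^2 = \|Du\|^2 + \|\nabla\cdot u-\nabla\phi\cdot u\|^2 - \int_{\R^d} u\cdot D^2\phi\cdot u\,e^{-\phi}\dd x\,,
\end{equation*}
hence the pivotal bound $\|Du\|^2 \le 2\,\|D^s u\|^2 + (u,D^2\phi\,u)$. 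All three inequalities will then reduce to dominating the error term $(u,D^2\phi\,u)$ by a multiple of $\|D^s u\|^2$.

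Next I would reduce to the normalization $\seq{u}=0$ and $\P u=0$: the left-hand sides of~\eqref{eq:WKfull}--\eqref{eq:WPKfull}--\eqref{eq:WPKstrong} are all invariant under $u\leftarrow u-\seq{u}-\P u$, since $D^s$ annihilates constants and $R\in\RR$, while $\fP(Du)=\seq{D^a u}$ by~\eqref{ExplProj} absorbs the constant antisymmetric matrix $D(\P u)$. Under this normalization, Poincar\'e's inequality~\eqref{hyp:poincarebasique} applied componentwise yields $\|u\|^2 \le \CP\,\|Du\|^2$. In parallel I would derive the scalar weighted Poincar\'e-type bound
\begin{equation*}
\int_{\R^d} |\nabla\phi|^2 f^2\,e^{-\phi}\dd x \le 8\,\|\nabla f\|^2 + (\Cphi-1)\,\|f\|^2
\end{equation*}
by writing $|\nabla\phi|^2 e^{-\phi}=-\nabla\phi\cdot\nabla e^{-\phi}$, integrating by parts to produce $\int f^2\Delta\phi\,e^{-\phi}$ and $\int f\,\nabla f\cdot\nabla\phi\,e^{-\phi}$, controlling $|\Delta\phi|\le\sqrt d\,|D^2\phi|$ by~\eqref{eq:cphi}, and absorbing the $|\nabla\phi|^2 f^2$ contribution with Young's inequality. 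Summed componentwise this gives $\|\wgt u\|^2 \le 8\,\|Du\|^2 + \Cphi\,\|u\|^2$.

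Combining these ingredients with~\eqref{hyp:regularity} for a small parameter $\eps$,
\begin{equation*}
  \bigl|(u,D^2\phi\,u)\bigr| \le \eps\int_{\R^d}|\nabla\phi|^2|u|^2 e^{-\phi}\dd x + C_\eps\|u\|^2 \le \bigl[8\eps+\bigl(\eps(\Cphi-1)+C_\eps\bigr)\CP\bigr]\|Du\|^2\,.
\end{equation*}
Choosing the parameters so that this bracket is strictly less than $1$, the pivotal bound rearranges to $\|Du\|^2 \le \CK\,\|D^s u\|^2$ with an explicit $\CK$ depending only on $\CP,\Cphi,\Cphi'$; since $\|Du-\fP(Du)\|^2 \le \|Du\|^2$, this yields~\eqref{eq:WKfull}. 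Inequality~\eqref{eq:WPKfull} follows with $\CPK=\CP\,\CK$ by Poincar\'e, and~\eqref{eq:WPKstrong} with $\CSPK=(8+\Cphi\CP)\CK$ from the weighted scalar bound.

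The main obstacle is the constants bookkeeping at the absorption step: one must choose $\eps$ in~\eqref{hyp:regularity} small enough that $8\eps+\CP\eps(\Cphi-1)$ is well below $1$, while simultaneously controlling the resulting (larger) $C_\eps$ so that $\CP\,C_\eps$ also stays below the remaining budget. It is precisely the infinitesimal-boundedness of $|D^2\phi|$ by $|\nabla\phi|^2$ assumed in~\eqref{hyp:regularity} that makes this trade-off possible and keeps the final constants quantitative in $\CP,\Cphi,\Cphi'$. For the optimality in the Gaussian case~\eqref{Gaussian}, the identity specialises to $2\|D^s u\|^2=\|Du\|^2+\|\nabla\cdot u-x\cdot u\|^2-\|u\|^2$, and combined with the sharp Gaussian Poincar\'e inequality (constant~$1$) under the normalization $\seq{u}=\P u=0$, an inspection of equality cases via Hermite polynomial decomposition identifies $\CK=4$ and $\CPK=2$.
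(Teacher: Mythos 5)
Your reduction to the weighted identity \eqref{eq:weakKorn2} breaks down at the absorption step, and the failure is structural, not a matter of bookkeeping. You need a bound of the form $(u,D^2\phi\,u)\le\theta\,\|Du\|^2$ with $\theta<1$ on the subspace $\{\seq u=0,\ \P(u)=0\}$, and you propose to get it from \eqref{hyp:regularity} plus the componentwise Poincar\'e inequality, i.e.\ $\theta=8\,\eps+\big(\eps\,(\Cphi-1)+C_\eps\big)\CP$. But $C_\eps$ does not tend to $0$ as $\eps\to0$ (it is bounded below by $|D^2\phi|$ at points where $\nabla\phi$ vanishes), so the product $C_\eps\,\CP$ carries no smallness whatsoever. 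Worse, no choice of constants can save the scheme: in the Gaussian case \eqref{Gaussian} one has $(u,D^2\phi\,u)=\|u\|^2$ exactly, and the field $u(x)=(x_1,0,\dots,0)$ satisfies $\seq u=0$, $\P(u)=0$ and saturates the Poincar\'e inequality with constant $\CP=1$, so the best possible $\theta$ on your subspace is $1$ and the pivotal inequality $\|Du\|^2\le2\,\|D^s u\|^2+\theta\,\|Du\|^2$ yields nothing. This is precisely why the paper's Gaussian proof subtracts the \emph{full} linear part $B\,x$ with $B=\seq{Du}$ (not just the rotational projection $\P(u)$), so as to gain orthogonality to the first nonconstant eigenspace and invoke the improved Poincar\'e inequality \eqref{eq:improvedpoincare} with constant $2$, which makes the absorption close; the price is the extra computation reinserting $B\,x$, which your normalization skips. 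For a general potential satisfying only \eqref{hyp:intnorm}--\eqref{hyp:regularity}--\eqref{hyp:poincarebasique} there is no analogue of that improved spectral-gap inequality, and the paper accordingly abandons the identity route altogether: the general case is proved by combining the Poincar\'e--Lions inequality \eqref{eq:poincarelions} applied to $D^a u$ with the Schwarz identity \eqref{Schwarz}, which expresses $\nabla(D^a u)$ through derivatives of $D^s u$ and never requires absorbing a zeroth-order error term.

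Two secondary points. First, even granting the Korn bound, your derivation of \eqref{eq:WPKstrong} needs care: $D(\P u)$ is a constant antisymmetric matrix which in general differs from $\fP(Du)=\seq{D^a u}$ (they coincide only in the Gaussian case), so $\|D(u-\seq u-\P u)\|$ is not directly controlled by $\|Du-\fP(Du)\|$; the paper closes this with the estimate $\norm{\fP(Du)}^2\le\norm{\nabla\phi}^2\norm{u}^2$ obtained by integration by parts. Second, for the optimality claims $\CK=4$, $\CPK=2$ an ``inspection of equality cases'' is not enough as stated; one must exhibit an extremizer, and the paper does so explicitly with $u(x)=(1-x_2^2,\,x_1x_2,\,0,\dots,0)$.
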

The terminology {\em ``Korn constant''} refers to Korn's original
results~\cite{Kor06,Kor08,Kor09} whereas {\em
  ``Poincar\'e-Korn''} and {\em ``strong Poincar\'e-Korn''}
respectively refer to usual Poincar\'e inequalities and to strong
Poincar\'e inequalities (see
Proposition~\ref{prop:poincares}). For brevity, we shall speak
generically of {\em ``Korn-type inequalities''} or simply {\em
  ``Korn inequalities''}. Explicit bounds for $\CK$, $\CPK$ and
$\CSPK$ will also be given later. The constant $\CP$ is the
optimal constant in~\eqref{hyp:poincarebasique} while $\Cphi$ and
$\Cphi'$ refer to~\eqref{eq:cphi}. The minimum in the left-hand
side of~\eqref{eq:WKfull} is explicit: according
to~\eqref{ExplProj}, we have
$$
\|\nablaD u-\fP (Du)\|^2=\|\nablaD u-\seq{D^a u}\|^2 \leq \CK\,\|D^s u\|^2.
$$
In~\eqref{eq:WPKfull} and~\eqref{eq:WPKstrong}, there is no
simple expression for $\P(u)$ as for $\fP(Du)$. Note that the
strong Poincar\'e inequality (see
Proposition~\ref{prop:poincares} below) implies $\RR\subset H^1$,
so that the statement of Theorem~\ref{theo:KPK} makes sense.

\medskip The defaults of axisymmetry of the boundary of the
domain were taken into account in~\cite{DV02}, in the bounded
domain case {\em without potential} (this case will be called the
{\em ``flat case''} from now on). Here the eventual
non-axisymmetry arises from the potential. Measuring the default
of axisymmetry motivates our introduction of the finite
dimensional space $\RR_\phi^c$ and of the {\em rigidity of vector
  fields} constant~$\CRV$ defined by
\begin{equation}
  \label{eq:rigidityvect}
  \CRV^{-1}
  :=\min_{ A\,x+b \: \in \: (\RR_\phi^c \oplus \R^d)\setminus \set{0}}
  \frac{\norm{\nabla\phi(x)\cdot (A\,x+b)}^2}{\norm{A\,x+b\,}^2}
\end{equation}
if $\RR_\phi^c\neq\set{0}$, and of the {\em rigidity of
  differential} constant $\CRD$ defined by
\begin{equation}
  \label{eq:rigiditydiff}
  \CRD^{-1} =\min_{ (A,b) \: \in \: (\fM_\phi^c\otimes\R^d)
    \setminus
    \set{(0,0)}}
  \frac{\norm{\nabla\phi(x)\cdot (A\,x+b) }^2}{|A|^2+|b|^2}
\end{equation}
if $\fM_\phi^c\neq\set{0}$. We adopt the convention that $\CRV=0$
(respectively $\CRD=0$) if $\RR_\phi^c=\set{0}$ (respectively
$\fM_\phi^c=\set{0}$). Let us show that these constants are
well-defined in $\R_+$. Since $\RR$ and $\fM_\phi$ have a finite
dimension the minima in~\eqref{eq:rigidityvect}
and~\eqref{eq:rigiditydiff} exist. The first (respectively
second) minimum is positive when $\RR_\phi^c \neq\set{0}$
(respectively $\fM_\phi^c \neq\set{0}$). Indeed the linear maps
$\RR_\phi^c\oplus \R^d : A\,x+b \mapsto \nabla\phi(x)\cdot
(A\,x+b) \in L^2$ and
$\fM_\phi^c\oplus \R^d : (A,b) \mapsto \nabla\phi(x)\cdot
(A\,x+b) \in L^2$ are injective: if
$\nabla\phi(x)\cdot (A\,x+b)=0$ for all $x \in \R^d$, then by
integration by parts
\begin{equation*}
  0=\int_{\R^d} \nabla\phi(x)\cdot (A\,x+b)\ b\cdot x\,
  e^{-\phi(x)} \dd x=|b|^2
\end{equation*}
because $\seq{x}=0$, so that $b=0$, and as a consequence $A=0$
since $\RR_\phi \cap \RR_\phi^c=\set{0}$ and
$\fM_\phi \cap \fM^c_\phi = \{0\}$. We can now state {\it
  precised} Korn and Poincar\'e-Korn inequalities in which $u$ is
also controlled on the space of infinitesimal rotation that do
not leave $\phi$ invariant.
\begin{theo}[Precised Poincar\'e-Korn and Korn inequalities]\label{theo:PKPK} Suppose~\eqref{hyp:intnorm}--\eqref{hyp:regularity}--\eqref{hyp:poincarebasique}. Then there are a precised Korn constant $\CK'$ and a precised Poincar\'e-Korn constant $\CPK'$ such that, for all $u\in H^1$,
\begin{align}
\label{eq:WPK} & \inf_{R\in\RR_\phi}\|u-R\|^2=\|u-\P_\phi (u)\|^2 \leq \CPK'\,\|D^s u\|^2+2\,\CRV\,\|\nabla\phi\cdot u\|^2,\\
\label{eq:WK} & \inf_{R\in\RR_\phi}\|\nablaD (u-R)\|^2=\|\nablaD u-\fP_\phi (Du)\|^2 \le \CK'\,\|D^s u\|^2+2\,\CRD\,\|\nabla\phi\cdot u\|^2.
\end{align}
Moreover the constants $\CPK'$ and $\CK'$ have explicit bounds depending only on the structural constants $\CP$, $\Cphi$, $\Cphi'$, $\CRD$ and $\CRV$. \end{theo}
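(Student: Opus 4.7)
My plan is to derive both inequalities of Theorem~\ref{theo:PKPK} from Theorem~\ref{theo:KPK} by combining an orthogonal decomposition with the rigidity constants $\CRV$ and $\CRD$. Throughout, I will use the following structural facts: the constants $\R^d$ are $L^2$-orthogonal to $\RR$ (since the measure is centred); the orthogonal splittings $\RR = \RR_\phi \oplus \RR_\phi^c$ in the vector-valued $L^2$ and $\fM^a = \fM_\phi \oplus \fM_\phi^c$ in the matrix-valued $L^2$; the defining property $\nabla\phi \cdot R \equiv 0$ for every $R \in \RR_\phi$; and the pointwise estimate $|\nabla\phi \cdot w| \le \wgt\,|w|$, which lets me pass from the strong Poincar\'e--Korn inequality \eqref{eq:WPKstrong} to a control on $\|\nabla\phi \cdot w\|$.

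I would begin with \eqref{eq:WPK}. By the structural facts above,
\[
  u - \P_\phi(u) = \bigl(\seq{u} + \P_{\RR_\phi^c}(u)\bigr) + \bigl(u - \seq{u} - \P(u)\bigr)
\]
is an $L^2$-orthogonal decomposition, so Pythagoras together with \eqref{eq:WPKfull} gives $\|u - \P_\phi(u)\|^2 \le \|\seq{u} + \P_{\RR_\phi^c}(u)\|^2 + \CPK\,\|D^s u\|^2$. Since the first summand belongs to $\R^d \oplus \RR_\phi^c$, the rigidity \eqref{eq:rigidityvect} applies:
\[
  \|\seq{u} + \P_{\RR_\phi^c}(u)\|^2 \le \CRV\,\|\nabla\phi \cdot (\seq{u} + \P_{\RR_\phi^c}(u))\|^2.
\]
Using $\nabla\phi \cdot \P_\phi(u) = 0$, the right-hand side equals $\|\nabla\phi \cdot u - \nabla\phi \cdot (u - \seq{u} - \P(u))\|^2$, which the triangle inequality, the pointwise bound, and \eqref{eq:WPKstrong} bound by $2\|\nabla\phi \cdot u\|^2 + 2\CSPK\,\|D^s u\|^2$. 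Tallying gives \eqref{eq:WPK} with $\CPK' = \CPK + 2\CRV\CSPK$.

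For \eqref{eq:WK} I would run the same template at the matrix level. The $L^2$-orthogonal splitting
\[
  Du - \fP_\phi(Du) = \bigl(Du - \fP(Du)\bigr) + A, \qquad A := \fP_{\fM_\phi^c}(Du) \in \fM_\phi^c,
\]
combined with Pythagoras and \eqref{eq:WKfull} yields $\|Du - \fP_\phi(Du)\|^2 \le \CK\,\|D^s u\|^2 + |A|^2$. Applying the rigidity \eqref{eq:rigiditydiff} with $b = \seq{u}$ gives $|A|^2 \le \CRD\,\|\nabla\phi \cdot (Ax + \seq{u})\|^2$, and the decomposition $\nabla\phi \cdot (Ax + \seq{u}) = \nabla\phi \cdot u - \nabla\phi \cdot (u - Ax - \seq u)$, together with $(a+b)^2 \le 2a^2 + 2b^2$, reduces the problem to controlling $\|\nabla\phi \cdot (u - Ax - \seq u)\|^2$ by $\|D^s u\|^2$.

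The main obstacle is this last step and is the essential new point compared with \eqref{eq:WPK}. Since $A$ arises from a Frobenius projection onto $\fM_\phi^c$, which uses a different inner product from the vector-level projection $\P$ appearing in \eqref{eq:WPKstrong}, the strong Poincar\'e--Korn inequality does not apply directly to $u - Ax - \seq u$. My plan is to insert $\pm \P(u)$ and write $u - Ax - \seq u = (u - \seq u - \P(u)) + (A_* - A)x$ where $\P(u) = A_* x$; the first summand is controlled via \eqref{eq:WPKstrong}, and the $\nabla\phi$-product of the second reduces modulo $\fM_\phi$ (where $\nabla\phi$ vanishes) to an element of $\fM_\phi^c\,x$, on which $\|\nabla\phi \cdot (\cdot)\|$ and $|\cdot|$ are equivalent with constant tied to $\CRD$ and to finite moments of $e^{-\phi}\dd x$ bounded through \eqref{eq:cphi}. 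A short bootstrap absorbing the resulting $|A|^2$ contribution yields \eqref{eq:WK} with $\CK'$ depending only on $\CP$, $\Cphi$, $\Cphi'$, $\CRV$ and $\CRD$.
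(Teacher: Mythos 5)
Your treatment of \eqref{eq:WPK} is correct and coincides with the paper's own proof: orthogonal splitting off $\seq u+\P(u)-\P_\phi(u)$, the rigidity constant $\CRV$, and the strong Poincar\'e--Korn inequality \eqref{eq:WPKstrong}, yielding the same constant $\CPK'\le\CPK+2\,\CRV\,\CSPK$.

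For \eqref{eq:WK} the skeleton (splitting $Du-\fP_\phi(Du)=(Du-\fP(Du))+A$, Pythagoras with \eqref{eq:WKfull}, rigidity \eqref{eq:rigiditydiff} with $b=\seq u$, and the $2a^2+2b^2$ split) again matches the paper, but the step you yourself single out as the main obstacle is left unproven, and the sketch you give for it does not work as stated. You reduce to bounding $\|\nabla\phi\cdot(M_c\,x)\|\le K\,|M_c|$ with $M_c\in\fM_\phi^c$, where $K$ is a moment constant, and then invoke ``a short bootstrap absorbing the resulting $|A|^2$ contribution''. Two problems: (i) the only bound suggested by your decomposition is $|M_c|\le|A_*|+|A|$, and feeding this back through $|A|^2\le2\,\CRD\,\|\nabla\phi\cdot u\|^2+2\,\CRD\,T^2$ produces a term of order $\CRD\,K^2\,|A|^2$ on the right; absorption requires $\CRD\,K^2$ to be small, which is not guaranteed by the structural constants (and even when it holds, dividing through inflates the coefficient of $\|\nabla\phi\cdot u\|^2$ beyond the stated $2\,\CRD$); (ii) $|A_*|$ alone is not controlled by $\|D^s u\|$. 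The gap can be repaired, e.g.\ by writing $M_c$ as the Frobenius projection onto $\fM_\phi^c$ of $A_*-\seq{D^a u}$, bounding $\|(A_*-\seq{D^au})\,x\|$ by $(\sqrt{\CPK}+\sqrt{\CP\,\CK}\,)\,\|D^su\|$ via \eqref{eq:WPKfull}, \eqref{hyp:poincarebasique} and \eqref{eq:WKfull}, and converting to $|A_*-\seq{D^au}|$ through a lower bound $\lambda_{\min}(\seq{x\otimes x})\ge1/(3\,\Cphi)$ (obtained from $1=\seq{(e\cdot x)(e\cdot\nabla\phi)}$ and $\|\nabla\phi\|^2\le3\,\Cphi$); but none of this is in your proposal, and it is precisely the content of the missing step.

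The paper's proof avoids the mismatch-of-projections issue entirely, and you should compare: since the remainder is $u-\fP(Du)\,x-\seq u$ with a \emph{constant} matrix $\fP(Du)$, one applies the scalar strong Poincar\'e inequality \eqref{eq:strongpoincare} componentwise to $v=u-\fP(Du)\,x$ (whose mean is $\seq u$), getting $\|\nabla\phi\cdot(u-\fP(Du)\,x-\seq u)\|^2\le\|\wgt\,(v-\seq v)\|^2\le\CSP\,\|Dv\|^2=\CSP\,\|Du-\fP(Du)\|^2\le\CSP\,\CK\,\|D^s u\|^2$ by \eqref{eq:WKfull}. No use of \eqref{eq:WPKstrong}, no bootstrap, and \eqref{eq:WK} follows directly with $\CK'\le\CK\,(1+2\,\CRD\,\CSP)$ and the coefficient $2\,\CRD$ exactly as stated.
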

Explicit bounds for $\CK'$ and $\CPK'$ will be given in the proofs. For any $u\in H^1$, the strong Poincar\'e inequality implies $\nabla\phi\cdot u \in L^2$ thanks to~\eqref{Concentration}, so that the statement makes sense (see Proposition~\ref{prop:poincares} below and Remarks~\ref{rem:concentration} and~\ref{rem:integrability} in Appendix~\ref{Sec:Remarks}). The main difference with the flat case is the $\norm{\nabla\phi\cdot u}^2$ term in the right-hand side of the inequality whereas there is no additional term in~\eqref{eq:KornOmega2}. This cannot be avoided as shown by the following example. Let us consider an infinitesimal rotation $u=R$, with $R\neq 0$, in the case without invariance by any rotation $e^{tA}$, that is, $\RR_\phi=\set{0}$. Then $u \in H^1$ and inequality~\eqref{eq:WK} reduces to $0 \neq \norm{R}^2 \leq 2\,\CRV\,\norm{\nabla\phi\cdot R}^2$ since $\nablasym R=0$. This also shows that, compared to~\eqref{eq:WKfull}, an additional term is needed.

\medskip As often, the functional inequalities of Theorems~\ref{theo:KPK} and~\ref{theo:PKPK} are linked with spectral properties of nonnegative differential operators. By a simple integration by parts, the formal adjoint of $\nabla$ in $L^2$ equipped with the weight $e^{-\phi(x)} \dd x$ is $\nabla^* u=-\,\nablaphi\cdot u$ for any smooth vector field $u$, where $\nablaphi\cdot u :=\nabla\cdot u- \nabla\phi\cdot u$. The first operator is the so-called {\em Witten-Laplace operator on functions} $-\Delta_\phi$ (sometimes also called the {\em Ornstein-Uhlenbeck operator}) which replaces the usual Laplacian in the flat case. It is associated with the quadratic form $f\mapsto\|\nabla f\|^2=\int_{\R^d} |\nabla f|^2\,e^{-\phi} \dd x$ and defined by
\be\label{def:Deltaphi}
-\Delta_\phi f :=-\,\nablaphi\cdot \nabla f=-\,\Delta f+\nabla\phi\cdot \nabla f\,.
\ee
The operator $-\Delta_\phi$ is nonnegative and symmetric.

For convenience, we shall also denote by $-\Delta_\phi$ the
operator acting coordinate by coordinate on vector fields, that
is for any smooth vector field $u$,
$(\Delta_\phi u)_i=\Delta_\phi u_i$, and similarly extend it to
matrices. In the same spirit, we introduce various differential
operators. The formal adjoint of $D^s$ is defined by
$(D^s)^*\mathfrak F=-\,D^s_\phi\cdot\mathfrak F$ for any
matrix-valued function $\mathfrak F$, so that
$D^s_\phi:=D^s-D^s\phi$ acts on matrix-valued functions and takes
value in a space of vector fields. Here
$D^s\phi\cdot\mathfrak F:=\nabla\phi\cdot\mathfrak F^s$. Let us
consider
\begin{equation*}\label{eq:SWS}
  -\Delta_S\,u:=-\,D^s_\phi\cdot D^su\quad\mbox{and}\quad
  -\Delta_{S\phi}\,u:=-\,D^s_\phi\cdot D^s u+(\nabla\phi
  \otimes \nabla\phi)\,u\,,
\end{equation*}
acting on a smooth vector field $u$. The differential operators
$-\Delta_S$ and $-\Delta_{S\phi}$ are associated respectively
with $\|D^s u\|^2$ and $\|D^s u\|^2+\|\nabla\phi\cdot u\|^2$,
which appear in the various Korn and Poincar\'e-Korn
inequalities. Additional details have been collected in
Appendix~\ref{Appendix:B4}.
\begin{theo}[Associated operators acting on vector fields]
  \label{theo:ao}
  Suppose~\eqref{hyp:intnorm}--\eqref{hyp:regularity}--\eqref{hyp:poincarebasique}. Then
  the operators $-\Delta_\phi$, $-\Delta_S$, and
  $-\Delta_{S\phi}$ are essentially self-adjoint on $L^2$. They
  have a common domain $\ddd$, finite dimensional kernels
  \begin{equation*}\label{kernel}
    \ker(-\Delta_\phi)=\R^d,\quad
    \ker(-\Delta_S)=\R^d\oplus\RR\,,
    \quad\ker(-\Delta_{S\phi})=\RR_\phi\,,
  \end{equation*}
  and positive spectral gaps. The spectral gap of $-\Delta_\phi$
  is the Poincar\'e constant $\CP$ while the spectral gaps of
  $-\Delta_S$ and $-\Delta_{S\phi}$ are estimated respectively in
  Theorems~\ref{theo:KPK}~and~\ref{theo:PKPK}.
\end{theo}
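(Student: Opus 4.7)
The plan is to handle the three operators in parallel, taking $\ccc_c^\infty(\R^d;\R^d)$ as the common core and using the vector-valued Witten-Laplacian $-\Delta_\phi$ (defined coordinate by coordinate) as a reference operator. The assumption~\eqref{hyp:regularity}, in its quantitative form~\eqref{eq:cphi}, is the key technical input.

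First I would check that on the core each operator is symmetric, nonnegative, and has the expected quadratic form: a direct integration by parts against $e^{-\phi}\dd x$ gives
\[
(u,-\Delta_\phi u)=\|Du\|^2,\qquad (u,-\Delta_S u)=\|D^s u\|^2,\qquad (u,-\Delta_{S\phi}u)=\|D^s u\|^2+\|\nabla\phi\cdot u\|^2.
\]
From these identities the kernels can be read off directly. For $-\Delta_\phi$, $Du=0$ forces $u$ to be constant coordinate-wise, so $\ker(-\Delta_\phi)=\R^d$. For $-\Delta_S$, $D^s u=0$ characterises Killing fields of the Euclidean metric, $u(x)=A\,x+b$ with $A\in\mathfrak M^a$ and $b\in\R^d$, hence $\ker(-\Delta_S)=\R^d\oplus\RR$. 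For $-\Delta_{S\phi}$ one further imposes $\nabla\phi\cdot u=0$; pairing the identity $\nabla\phi(x)\cdot(Ax+b)=0$ with $b\cdot x$ and using $\seq{x}=0$ exactly as in the injectivity argument following~\eqref{eq:rigiditydiff} forces $b=0$, so $\ker(-\Delta_{S\phi})=\RR_\phi$.

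The spectral gaps then follow at once from the variational inequalities already proved, via the min--max principle. Applied coordinate by coordinate to $u\perp\R^d$, inequality~\eqref{hyp:poincarebasique} becomes $\|u\|^2\leq\CP\,(u,-\Delta_\phi u)$. For $u\perp\R^d\oplus\RR$, inequality~\eqref{eq:WPKfull} reads $\|u\|^2\leq\CPK\,(u,-\Delta_S u)$, and for $u\perp\RR_\phi$, inequality~\eqref{eq:WPK} yields $\|u\|^2\leq\max(\CPK',\,2\,\CRV)\,(u,-\Delta_{S\phi}u)$, which are the announced gaps.

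The substantive work lies in the essential self-adjointness and in the identification of a common domain~$\ddd$. I would first treat $-\Delta_\phi$ by the standard Kato-type argument for Witten operators: commutator estimates built on~\eqref{eq:cphi} show that on $\ccc_c^\infty$ each of the three quantities $\|D^2 u\|^2$, $\|\,|\nabla\phi|\,|Du|\,\|^2$ and $\|\,|\nabla\phi|^2\,u\,\|^2$ is dominated by $\|(-\Delta_\phi+1)\,u\|^2$, yielding essential self-adjointness and a concrete characterisation of $\ddd$. Using the algebraic identities recorded in Appendix~\ref{Appendix:B4}, I would then rewrite $-\Delta_S$ as $-\tfrac12\Delta_\phi$ plus correction terms involving $\nabla(\nabla\cdot u)$ and $(Du)^T\nabla\phi$, and $-\Delta_{S\phi}$ as $-\Delta_S+(\nabla\phi\otimes\nabla\phi)\,u$. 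The same bounds show that these corrections are infinitesimally $-\Delta_\phi$-bounded on $\ddd$, and a Kato--Rellich type stability argument then transfers essential self-adjointness and the domain $\ddd$ to $-\Delta_S$ and $-\Delta_{S\phi}$. This last step---in particular the control of the quadratic zeroth-order tensor $\nabla\phi\otimes\nabla\phi$ by the Witten-Laplacian---is the main technical obstacle, and is precisely where the two-sided comparison in~\eqref{eq:cphi} is needed.
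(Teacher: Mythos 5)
Your setup (quadratic forms by integration by parts, identification of the kernels, and the spectral gaps via the variational inequalities~\eqref{hyp:poincarebasique},~\eqref{eq:WPKfull} and~\eqref{eq:WPK}) matches the paper, and the treatment of $-\Delta_\phi$ itself is the standard Kato/Witten argument also used there. The gap is in the step you yourself flag as the main obstacle: the Kato--Rellich transfer to $-\Delta_S$ and $-\Delta_{S\phi}$ does not work as described. Writing $-\Delta_S$ as a multiple of $-\Delta_\phi$ plus corrections, the correction contains the \emph{second-order} term $\nabla(\nabla\cdot u)$, which has the same order as the reference operator; its relative bound with respect to the corresponding multiple of $-\Delta_\phi$ is not infinitesimal and is in fact equal to $1$ in the worst direction (take gradient fields $u=\nabla f$, for which $\nabla(\nabla\cdot u)=\Delta u$), so Kato--Rellich (which needs bound $<1$) is inapplicable, and even a borderline result such as W\"ust's theorem would only give essential self-adjointness without the claimed common domain. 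Similarly, the zeroth-order tensor $\nabla\phi\otimes\nabla\phi$ is \emph{not} infinitesimally $\Delta_\phi$-bounded: the estimate available from~\eqref{eq:cphi} and Proposition~\ref{prop:domain} is $\|\wgt^2 u\|\le\sqrt{\CB}\,\|\Lambda u\|$ with a large, fixed constant $\sqrt{\CB}$, and already in the Gaussian case the multiplication by $|x|^2$ is comparable to, not negligibly small against, the Ornstein--Uhlenbeck operator. So the essential self-adjointness of $-\Delta_S$ and $-\Delta_{S\phi}$ is not established by your argument.

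The paper's route avoids perturbation theory entirely and instead uses the Korn inequalities as the operator-theoretic input: by~\eqref{eq:WKfull} (together with the Poincar\'e--Korn estimates) the form norm $u\mapsto\|u\|^2+\|D^su\|^2$ is equivalent on $\ccc_c^\infty(\R^d;\R^d)$ to the $H^1$ norm, so the form domain $H^1_S$ coincides with $H^1$; consequently the Friedrichs extension of $\Id-\Delta_S$ (domain inside $H^1_S$) and the maximal Lax--Milgram extension (domain inside $H^1$) coincide, and uniqueness of the self-adjoint extension with domain contained in the form domain (\cite[Theorem~X.23]{RS75}) yields essential self-adjointness; the same argument applies to $-\Delta_{S\phi}$ using~\eqref{eq:WKfull}--\eqref{eq:WPKstrong}, with Proposition~\ref{prop:domain} guaranteeing $(\nabla\phi\otimes\nabla\phi)\,u\in L^2$ on the domain. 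To repair your proof you should replace the Kato--Rellich step by this form-domain comparison (or by some other argument that exploits the inequalities of Theorems~\ref{theo:KPK} and~\ref{theo:PKPK} rather than smallness of the corrections); the kernel and spectral gap parts of your proposal can then stand as written.
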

A positive spectral gap means that the infimum of the restriction
of the spectrum to $(0,+\infty)$ is positive. Our last main
result is devoted to a Korn-type inequality valid for vector
fields $u\in L^2$ while Theorems~\ref{theo:KPK}
and~\ref{theo:PKPK} are limited to $u\in H^1$. We shall
compose by inverse powers of the following positive operator
\begin{equation}
  \label{Lambda}
  \Lambda:=-\,\Delta_\phi+\Id
\end{equation}
acting on functions, vector fields or matrices, {\em coordinate
  by coordinate}. By Theorem~\ref{theo:ao}, $\Lambda$ is
essentially self-adjoint (we keep the same name for the unique
self-adjoint extension), $\Lambda\ge\Id$, and $\Lambda^{-1/2}$ is
one-to-one from $H^{-1}$ into~$L^2$ (see
Propositions~\ref{prop:H1} and~\ref{prop:domain} for more
details). In order to measure the possible non-axisymmetry of the
potential $\phi$ in an $L^2$ setting, we introduce the {\em
  rigidity of vector fields} constant 
\begin{equation}\label{eq:rigidityzero}
\CRVZ^{-1}:=\min_{ A\,x+b \: \in \: (\RR_\phi^c \oplus
  \R^d)\setminus \set{0}} \frac{\norm{\Lambda^{-1/2}\,
    \nabla\phi(x)\cdot (A\,x+b)}^2}{\norm{A\,x+b}^2}\,.
\end{equation}
when $\RR_\phi^c\neq\set{0}$ and, by convention, $\CRVZ:=0$ if
$\RR_\phi^c=\set{0}$. The proof that this constant $\CRVZ$ is
well-defined in $\R_+$ is exactly similar to that for $\CRV$.
\begin{theo}[Zeroth order Korn and Poincar\'e-Korn
  inequalities]\label{theo:KPK0}
  Suppose~\eqref{hyp:intnorm}--\eqref{hyp:regularity}--\eqref{hyp:poincarebasique}. Then
  there are a zeroth order Korn constant $\CKZ$ and a zeroth
  order Poincar\'e-Korn constant $\CPKZ$ with explicitly
  computable bounds depending only on $\phi$ such that, for all
  $u\in L^2$,
  \begin{align}
    \label{eq:WKZfull}
    & \inf_{R\in\RR}\|\Lambda^{-1/2}\,\nablaD (u-R)\,\|^2=
      \big\|\Lambda^{-1/2}\,\big(\nablaD u-\fP (Du)\big)\big\|^2
      \leq
      \CKZ\,\|\Lambda^{-1/2}\,D^s u\|^2,\\
    \label{eq:WPKZfull}
    & \inf_{R\in\RR}\|u-\seq{u}-R\,\|^2=
      \norm{u-\seq{u}- \P(u)}^2 \le
      \CPKZ\,\|\Lambda^{-1/2}\,D^s u\,\|^2.
  \end{align}
  As a consequence, there is a zeroth order precised
  Poincar\'e-Korn constant $\CPKZ'$ such that, for all
  $u \in L^2$,
  \begin{align}
    \label{eq:WPKzero}
    & \inf_{R\in\RR_\phi}\|u-R\,\|^2=\|u-\P_\phi (u)\|^2
      \le \CPKZ'\,\|\,\Lambda^{-1/2}\,D^s u\,\|^2
      +2\,\CRVZ\,\|\Lambda^{-1/2}\,\nabla\phi\cdot u\,\|^2.
  \end{align}
\end{theo}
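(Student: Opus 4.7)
The three inequalities are proved in sequence, with \eqref{eq:WPKZfull} being the core and \eqref{eq:WKZfull} and \eqref{eq:WPKzero} deduced from it by soft arguments.

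\emph{Step 1 (proof of \eqref{eq:WPKZfull}).} After substituting $u-\seq u-\P u$ for $u$, the claim reduces to $\|u\|^2\le\CPKZ\,\|\Lambda^{-1/2}D^s u\|^2$ for $u\in L^2$ orthogonal to $\ker(-\Delta_S)=\R^d\oplus\RR$. Since $\|\Lambda^{-1/2}\,\cdot\,\|$ is the $H^{-1}$-norm dual to the $H^1$-norm $\|\Lambda^{1/2}\,\cdot\,\|$, this is an $H^{-1}$ counterpart of the first-order Poincar\'e-Korn inequality \eqref{eq:WPKfull}. I would proceed by duality. For $\varphi\in L^2$ with $\varphi\perp\R^d\oplus\RR$, Theorem~\ref{theo:ao} provides a unique $w$ in the operator domain of $-\Delta_S$ satisfying $w\perp\R^d\oplus\RR$ and $-\Delta_S w=\varphi$, and \eqref{eq:WPKfull} yields $\|D^s w\|\le\sqrt{\CPK}\,\|\varphi\|$. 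The key step is then to upgrade this to an $H^2$-estimate $\|D^2 w\|\le C\,\|\varphi\|$. Once this is in hand,
\[
(u,\varphi)_{L^2}=(u,(D^s)^*D^s w)_{L^2}=\langle D^s u,D^s w\rangle_{H^{-1},H^1}\le\|\Lambda^{-1/2}D^s u\|\,\|\Lambda^{1/2}D^s w\|,
\]
with $\|\Lambda^{1/2}D^s w\|^2=\|D^s w\|^2+\|\nabla D^s w\|^2\le(\CPK+C^2)\,\|\varphi\|^2$. Choosing $\varphi=u$ concludes with $\CPKZ=\CPK+C^2$.

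\emph{Step 2 (proof of \eqref{eq:WKZfull}).} Since $\Lambda^{-1/2}$ acts componentwise on matrices and preserves the symmetric/antisymmetric decomposition, the splitting $Du-\fP(Du)=D^s u+(D^a u-\seq{D^a u})$ remains $L^2$-orthogonal after applying $\Lambda^{-1/2}$, yielding
\[
\|\Lambda^{-1/2}(Du-\fP(Du))\|^2=\|\Lambda^{-1/2}D^s u\|^2+\|\Lambda^{-1/2}(D^a u-\seq{D^a u})\|^2.
\]
It remains to bound the antisymmetric mean-zero piece in terms of $\|\Lambda^{-1/2}D^s u\|^2$. I would apply the same duality scheme as in Step 1, starting from the first-order Korn inequality \eqref{eq:WKfull} (which provides the needed spectral gap on antisymmetric mean-zero matrices) together with a companion $H^2$-regularity bound.

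\emph{Step 3 (proof of \eqref{eq:WPKzero}).} Decompose $u-\P_\phi u=(u-\seq u-\P u)+(\seq u+\P u-\P_\phi u)$ and use the triangle inequality. The first piece is controlled by $\sqrt{2\CPKZ}\,\|\Lambda^{-1/2}D^s u\|$ via \eqref{eq:WPKZfull}. The second piece lies in $\R^d\oplus\RR_\phi^c$ (because $\P u-\P_\phi u$ is the component of $\P u$ in $\RR\cap\RR_\phi^\perp=\RR_\phi^c$), so by definition \eqref{eq:rigidityzero} of $\CRVZ$,
\[
\|\seq u+\P u-\P_\phi u\|^2\le\CRVZ\,\|\Lambda^{-1/2}\nabla\phi\cdot(\seq u+\P u-\P_\phi u)\|^2.
\]
Using $\nabla\phi\cdot\P_\phi u\equiv 0$ (definition of $\RR_\phi$) and $\seq u+\P u=u-(u-\seq u-\P u)$, a further triangle inequality produces the main term $\|\Lambda^{-1/2}\nabla\phi\cdot u\|^2$ and a remainder $\|\Lambda^{-1/2}\nabla\phi\cdot(u-\seq u-\P u)\|^2$. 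The remainder is bounded by $C\,\|u-\seq u-\P u\|^2$ through the $L^2$-boundedness of $\Lambda^{-1/2}(\nabla\phi\cdot\,)$ on vectors orthogonal to $\R^d\oplus\RR$ (itself obtained by integration by parts together with \eqref{hyp:poincarebasique}).

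\emph{Main obstacle.} The principal difficulty is the $H^2$-regularity for $-\Delta_S$ in Step 1 (and its analogue in Step 2). In the weighted setting the Bochner-type expansion of $\|{-\Delta_S w}\|^2$ generates a $D^2\phi$-dependent cross term; this must be absorbed using \eqref{hyp:regularity}, which bootstraps $|D^2\phi|\le\eps|\nabla\phi|^2+C_\eps$ against a coercive $\||\nabla\phi|\,|D^s w|\|^2$ term produced by the expansion of $(D^s)^*D^s w=-D^s_\phi\cdot D^s w$. A secondary technical point is the $L^2$-boundedness of $\Lambda^{-1/2}(\nabla\phi\cdot\,)$ on the orthogonal of $\R^d\oplus\RR$, required in Step 3.
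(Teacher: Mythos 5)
Your overall architecture (prove \eqref{eq:WPKZfull} on the orthogonal complement of $\R^d\oplus\RR$, then get \eqref{eq:WKZfull}, then conclude \eqref{eq:WPKzero} by splitting off the $\RR_\phi^c\oplus\R^d$ component and invoking $\CRVZ$) is reasonable, and your Step 3 is essentially the paper's own argument (except that the bound $\|\Lambda^{-1/2}(\nabla\phi\cdot v)\|^2\le \Cphi\,\|v\|^2$ is not an integration-by-parts/(H3) fact for $v\in L^2$ — you cannot integrate by parts onto a merely $L^2$ field — it comes from \eqref{hyp:regularity} through the toolbox estimate \eqref{eq:toolboxH1adj}). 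But there is a genuine gap at the heart of Steps 1 and 2: the entire weight of your proof rests on an $H^2$-regularity estimate $\|D^2 w\|\le C\,\|\varphi\|$ for solutions of $-\Delta_S w=\varphi$ (and a ``companion'' bound in Step 2), which you do not prove and yourself flag as the main obstacle. This is not a routine absorption argument: $-\Delta_S$ is a weighted Lam\'e-type system, $-\big(\Delta w+\nabla(\nabla\cdot w)-(\nabla\phi\cdot\nabla)\,w-2\,D^s w\,\nabla\phi\big)$, and expanding $\|\Delta_S w\|^2$ produces commutator terms in $D^2\phi$ contracted with first derivatives and with $\nabla\phi\otimes Dw$; controlling these under only \eqref{hyp:regularity} is exactly the kind of work the paper carries out for the \emph{scalar} operator $\Lambda$ in Propositions~\ref{prop:H1} and~\ref{prop:domain}, and redoing it for the system (with explicit constants, as the theorem requires) is a substantial missing piece, not a footnote. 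Step 2 is additionally underspecified: the quantity to bound there is the $H^{-1}$-type norm $\|\Lambda^{-1/2}(D^a u-\seq{D^a u})\|$, not an $L^2$ norm of a vector field, so the Step 1 duality scheme (solve an elliptic problem whose datum is the quantity being estimated) does not transfer verbatim, and you do not say what the adjoint problem or its regularity estimate is.

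For comparison, the paper's proof is designed precisely to avoid any elliptic regularity for the system. It first proves a Poincar\'e--Lions inequality of order $-1$ (Lemma~\ref{LemPL-1}),
\begin{equation*}
\CLPL^{-1}\,\|\Lambda^{-1/2}(f-\seq f)\|^2\le\|\Lambda^{-1}\nabla f\|^2\le\CRPL\,\|\Lambda^{-1/2}(f-\seq f)\|^2,
\end{equation*}
whose proof only needs boundedness of the scalar commutator operators $\Lambda\nabla\Lambda^{-3/2}$ and $\Lambda^{-1}\nabla\Lambda^{1/2}$, obtained from \eqref{eq:toolboxH1}--\eqref{eq:toolboxadj} and \eqref{eq:toolbox}. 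Then \eqref{eq:WKZfull} is proved directly: apply the left inequality to each entry of $D^a u$, use the Schwarz identity \eqref{Schwarz} to replace $\nabla(D^a u)$ by $\nabla(D^s u)$, and use the right inequality to come back, giving $\CKZ=1+4\,\CLPL\,\CRPL$; \eqref{eq:WPKZfull} is then deduced from \eqref{eq:WKZfull} by applying the first-order Poincar\'e--Lions inequality \eqref{eq:poincarelions} to $u-\fP(Du)\,x$ (note the logical order is the reverse of yours). If you want to salvage your route, either supply the weighted $H^2$ estimate for $-\Delta_S$ in full (with constants depending only on $\Cphi$, $\Cphi'$, $\CP$), or observe that once \eqref{eq:WPKZfull} is known, \eqref{eq:WKZfull} can be obtained by a soft duality: pair $D^a u-\seq{D^a u}$ against antisymmetric-valued test fields $h$ with $\|\Lambda^{1/2}h\|\le1$, integrate by parts onto $u$ (taken orthogonal to $\R^d\oplus\RR$ without loss of generality), and control $\|\nabla\phi\,(h-\seq h)\|$ by the strong Poincar\'e inequality \eqref{eq:strongpoincare}; as written, however, the proposal does not contain a proof of either \eqref{eq:WPKZfull} or \eqref{eq:WKZfull}.
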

Inequality~\eqref{eq:WPKzero} is a straightforward consequence of
the Poincar\'e-Korn inequality~\eqref{eq:WPKZfull} and the
existence of the rigidity constant $\CRVZ$.

\subsection{Main tools and considerations on the optimal cases
  and optimal constants}

The paper relies on three main tools.

\smallskip\noindent\circled1 {\em Poincar\'e-Wirtinger and
  Poincar\'e-Lions inequalities.} The proof of
Theorems~\ref{theo:KPK} and~\ref{theo:PKPK} for vector fields
relies on Poincar\'e-Wirtinger inequalities for {\em scalar
  functions}, which go as follows.
\begin{prop}
  \label{prop:poincares}
  Assume that~\eqref{hyp:intnorm},~\eqref{hyp:regularity}
  and~\eqref{hyp:poincarebasique} hold, for some Poincar\'e
  constant $\CP$. Then there exists a {\em strong Poincar\'e}
  constant $\CSP>0$ such that
  \begin{equation}
    \label{eq:strongpoincare}
    \forall\,f\in H^1,\quad\|\wgt(f-\seq{f})\|^2 \le
    \CSP\,\|\nabla f\|^2
  \end{equation}
  with $\CSP \leq \Cphi\,(1+\CP)$. With $\Lambda$ as
  in~\eqref{Lambda}, there exists also a {\em Poincar\'e-Lions}
  constant $\CPL>0$ such that
  \begin{equation}
    \label{eq:poincarelions}
    \forall\,f\in L^2,\quad\norm{f-\seq{f}}^2 \leq \CPL\,
    \|\Lambda^{-1/2}\,\nabla f\|^2 \leq \CPL\,\norm{f-\seq{f}}^2.
  \end{equation}
\end{prop}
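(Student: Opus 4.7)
The proposition contains two separate inequalities, proved by different techniques, which I sketch in turn.

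For the strong Poincar\'e inequality~\eqref{eq:strongpoincare}, my plan is to reduce it to the standard Poincar\'e inequality by controlling $\|\,|\nabla\phi|(f-\seq f)\,\|^2$ via integration by parts. Setting $g = f-\seq f$, the identity $|\nabla\phi|^2\,e^{-\phi} = -\nabla\phi\cdot\nabla e^{-\phi}$ followed by one integration by parts yields
\begin{equation*}
  \int_{\R^d}|\nabla\phi|^2\,g^2\,e^{-\phi}\dd x
  = \int_{\R^d}\Delta\phi\,g^2\,e^{-\phi}\dd x
  + 2\int_{\R^d}g\,\nabla\phi\cdot\nabla g\,e^{-\phi}\dd x.
\end{equation*}
The $\Delta\phi$ term is tamed using the first half of~\eqref{eq:cphi}, namely $|\Delta\phi|\le\sqrt d\,|D^2\phi|\le\tfrac14\bigl(|\nabla\phi|^2+\Cphi-1\bigr)$, and the cross term by Young's inequality $2|g\nabla\phi\cdot\nabla g|\le\tfrac12|\nabla\phi|^2g^2+2|\nabla g|^2$. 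Since the total coefficient of $|\nabla\phi|^2g^2$ on the right is $\tfrac34<1$, one absorbs it on the left to obtain $\|\,|\nabla\phi|g\,\|^2\le(\Cphi-1)\|g\|^2+8\|\nabla g\|^2$, valid in fact for any $g\in H^1$ (not only mean-zero). Combining with $\|g\|^2\le\CP\|\nabla g\|^2$ and $\Cphi>8$ yields $\|\wgt g\|^2\le\Cphi(1+\CP)\|\nabla g\|^2$, the announced bound $\CSP\le\Cphi(1+\CP)$.

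For the upper bound in~\eqref{eq:poincarelions}, I would argue by duality. Given a vector field $v\in L^2$, set $w:=\Lambda^{-1/2}v$ and use $\nabla^*w=-\nabla\cdot w+\nabla\phi\cdot w$ together with the coordinate-wise version of the weighted bound $\|\,|\nabla\phi|\psi\,\|^2\lesssim\|\psi\|^2+\|\nabla\psi\|^2$ established above to get $\|\nabla^*w\|^2\lesssim\|w\|^2+\|Dw\|^2$ with constants depending only on $d$ and $\Cphi$. Since $\Lambda\ge\Id$ coordinate-wise, $\|w\|\le\|v\|$, and the scalar spectral identity $\|\nabla\Lambda^{-1/2}\varphi\|^2=\|\varphi\|^2-\|\Lambda^{-1/2}\varphi\|^2\le\|\varphi\|^2$ gives $\|Dw\|\le\|v\|$. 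Hence $\|\nabla^*\Lambda^{-1/2}v\|\lesssim\|v\|$, and applying this with $v=\Lambda^{-1/2}\nabla f$ together with the duality identity $\|\Lambda^{-1/2}\nabla f\|^2=(g,\nabla^*\Lambda^{-1/2}v)$ closes the estimate via Cauchy-Schwarz.

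For the lower bound in~\eqref{eq:poincarelions}, I would use a Lions-type construction. The Poincar\'e inequality~\eqref{hyp:poincarebasique} implies that $-\Delta_\phi$ has a spectral gap on mean-zero scalars, so we may uniquely solve $-\Delta_\phi h=g$ with $\seq h=0$. Setting $u:=\nabla h$ yields $\nabla^*u=g$ and
\begin{equation*}
  \|g\|^2 = (\nabla^*u,g) = (u,\nabla f)
  = (\Lambda^{1/2}u,\Lambda^{-1/2}\nabla f)
  \le \|\Lambda^{1/2}u\|\,\|\Lambda^{-1/2}\nabla f\|,
\end{equation*}
with $\|\Lambda^{1/2}u\|^2=\|\nabla h\|^2+\|D^2h\|^2$. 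The first summand equals $(h,g)\le\CP\|g\|^2$ by Poincar\'e. For the second, the commutator identity $\Delta_\phi\partial_ih-\partial_i\Delta_\phi h=(D^2\phi\,\nabla h)_i$ rearranges into $\|D^2h\|^2=\|g\|^2-(\nabla h,D^2\phi\,\nabla h)$. Here the second half of~\eqref{eq:cphi}, with its small prefactor $\Cphi^{-1/2}$, combined with the strong Poincar\'e inequality applied coordinate-wise to $\nabla h$, produces an estimate of the form $|(\nabla h,D^2\phi\,\nabla h)|\le\delta\|D^2h\|^2+C\|\nabla h\|^2$ with $\delta<1$, after which absorption and Poincar\'e close the bound. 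The main obstacle throughout the Poincar\'e-Lions analysis is precisely this non-commutation $[\nabla,\Lambda]=D^2\phi\,\nabla$: every manipulation pays a $D^2\phi$-cost, tamed only by the quantitatively sharp~\eqref{eq:cphi}, which is why the strong Poincar\'e inequality must be proved first and the constant $\Cphi$ chosen large enough in~\eqref{eq:cphi} to make the absorption work.
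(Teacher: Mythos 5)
Your two-part argument is correct in substance. For the strong Poincar\'e inequality~\eqref{eq:strongpoincare} you follow essentially the paper's own route: the operator inequality $\wgt^2\le\Cphi\,\Lambda$ that the paper derives from~\eqref{eq:cphi} and the nonnegativity of $-\Delta$ after the ground-state transform is exactly your integration by parts plus Young absorption written in operator form, and both give $\CSP\le\Cphi\,(1+\CP)$. For the Poincar\'e--Lions inequality~\eqref{eq:poincarelions}, your proof of the left inequality is genuinely different: the paper starts from $(1+\CP)^{-1}\norm f^2\le\big((-\Delta_\phi)\,\Lambda^{-1}f,f\big)$ and then proves boundedness of $\Lambda^{1/2}\nabla\Lambda^{-1}$ via the commutator $[\Lambda,\nabla]=-\,D^2\phi\,\nabla$ and the $D(\Lambda)$-toolbox of Proposition~\ref{prop:domain} (the constant $\CB$), whereas you solve the dual Poisson problem $-\Delta_\phi h=f-\seq f$, test against $\nabla h$, and control $\|\Lambda^{1/2}\nabla h\|$ through the Bochner-type identity $\|\Delta_\phi h\|^2=\|D^2h\|^2+(\nabla h,D^2\phi\,\nabla h)$, absorbing the Hessian term thanks to the small prefactor in~\eqref{eq:cphi} and your weighted bound. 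Your route is more self-contained for this proposition (it bypasses the full elliptic estimate behind $\CB$) and yields a $\CPL$ depending only on $\CP$, $\Cphi$, $\Cphi'$; the paper's heavier route pays off elsewhere, since Proposition~\ref{prop:domain} is reused for the domain identification in Theorem~\ref{theo:ao} and in Lemma~\ref{LemPL-1}.

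Two caveats would make the sketch airtight. First, the absorption steps (the $\tfrac34$-absorption in the weighted bound and the $\delta\,\|D^2h\|^2$ absorption) presuppose that the absorbed quantities are finite a priori; as in the paper, this is handled by running the computation on the core $\ccc_c^\infty(\R^d;\R)$, dense in the graph norm of $\Lambda$ by essential self-adjointness (Kato), and passing to the limit. Second, your duality argument for the right-hand inequality only gives $\|\Lambda^{-1/2}\,\nabla f\|\le C(d,\Cphi)\,\norm{f-\seq f}$, while the displayed chain with a single constant $\CPL$ requires this bound with constant one; the paper obtains that from the adjoint estimate~\eqref{eq:toolboxH1adj}, so you should either invoke that or restate the conclusion with two constants, which is all that is used downstream.
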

Under the sole
assumptions~\eqref{hyp:intnorm}--\eqref{hyp:regularity}--\eqref{hyp:poincarebasique},
inequalities~\eqref{eq:strongpoincare}
and~\eqref{eq:poincarelions} are not completely standard. These
inequalities are linked to the spectral properties of
$-\Delta_\phi$, studied in Section~\ref{sec:wittenpoincare},
where elements of proofs of~\eqref{eq:strongpoincare}
and~\eqref{eq:poincarelions} are also collected. An estimate of
$\CPL$ is given in~\eqref{CPL}.

\smallskip\noindent\circled2 The {\em Schwarz Theorem} allows us
to write all components of the second-order differential of a
vector field~$u$ thanks to its symmetric components using the
identity
\begin{equation}\label{Schwarz}
  \forall\,i,j,k \in \set{1,\cdots, d}, \quad \partial_k
  \left( D^a u \right)_{ij}=\partial_j \left( D^s u \right)_{ik}
  -\partial_i \left( D^s u \right)_{jk}.
\end{equation}
This algebraic property is at the core of all Korn-type
inequalities, it means that derivatives of $D^au$ are in the span
of the derivatives of $D^s u$. Note that the Schwarz Theorem also
implies $D^a\,\nabla=0$ which is central in the construction of
the De Rham complex.

\smallskip\noindent\circled3 The {\em rigidity constants}, as
defined in~\eqref{eq:rigidityvect},~\eqref{eq:rigiditydiff}
and~\eqref{eq:rigidityzero}, measure the defects of axisymmetry
of the potential $\phi$. See Appendix~\ref{subsec:rigidity} for a
discussion.

\medskip Our method of proof can be summarised as follows: (i) we
take care of the finite-dimensional parts $\RR_\phi^c$ and
$\fM_\phi^c$ thanks to the rigidity constants in \circled3, (ii)
we apply twice the Poincar\'e inequality in \circled1, first in the
form~\eqref{eq:strongpoincare} and second in the
form~\eqref{eq:poincarelions}, so that we access second-order
derivatives but remain at first order thanks to $\Lambda^{-1/2}$,
(iii) we use the algebraic property in \circled2 to get rid of
the derivatives of $D^a u$.

\medskip The infima
in~\eqref{eq:WKfull},~\eqref{eq:WPKfull},~\eqref{eq:WPK},~\eqref{eq:WK},~\eqref{eq:WPKZfull}
and~\eqref{eq:WPKzero} are achieved respectively at
$\fP(Du)=\seq{D^a u}$ (see Section~\ref{Sec:Prf1}),
$\seq u+\P(u)$, $\P_\phi(u)$, $\fP_\phi(Du)$, $\seq u+\P(u)$ and
$\P_\phi(u)$ as a consequence of the definitions of the various
orthogonal projections. In the Gaussian case~\eqref{Gaussian}, we
have $D \P(u)=\fP (Du)$, but this relation is not true otherwise.
The constants are estimated explicitely and a summary is provided
in Appendix~\ref{Sec:Constants}.

\subsection{A brief review of the literature and a conjecture}
\label{sec:review}

We refer to~\cite[Eq.~(13)]{MR0022750},~\cite[Chapter~3,
Section~3.3]{MR0521262},~\cite[page~291]{MR936420},
and~\cite{MR1368384} for statements of the original Korn
inequality which goes back to~\cite{Kor06,Kor08,Kor09} in a
bounded domain with Dirichlet conditions, and
to~\cite{MR3498171,MR3582590,MR3570353} for considerations on the
best constant. There is a huge literature on applications to the
Navier-Stokes equations and elasticity models, which is out of
the scope of the present paper: see~\cite{MR3294348} for an
introduction to Korn's inequality applied to these topics.

The case of Korn inequalities in bounded domains with {\em
  Neumann boundary conditions} was carried out in~\cite{DV02},
driven by applications in kinetic theory
in~\cite{Desvillettes-Villani-2005}. The proof relates the Korn
constant to the so-called {\em Grad number}, which is further
studied in~\cite{MR2542573} and related to other geometric
bounds. The notion of Grad's number goes back to~\cite{Grad65} in
a bounded domain and was used in~\cite{DV02}. We refer to
Appendix~\ref{subsec:rigidity} for a more detailed discussion and
how it relates to our rigidity constants.

In bounded domains, inequalities of type~\eqref{eq:WPKfull} are
usually called {\em Poincar\'e-Korn} estimates (see for
instance~\cite[Section~1.3.1]{MR3294348}), and
inequality~\eqref{eq:WK} is reminiscent of what is sometimes
called the {\em second Korn inequality:}
see~\cite[Inequality~(7)]{MR631678}
and~\cite[Theorem~2]{MR995908}.

To our knowledge, the only result in the whole space with a
confinement potential is~\cite[Section~5]{Duan_2011} where the
Korn inequality~\eqref{eq:WPKfull} is proved by compactness,
under an additional growth condition on $\nabla\phi$. The
original contributions of this paper are\\ \hspace*{12pt} (i) a
proof of weighted Poincar\'e-Korn and Korn inequalities, under
rather general conditions,\\ \hspace*{12pt} (ii) a constructive
method which provides us with quantitative estimates on the
constants,\\ \hspace*{12pt}
(iii) some optimal constants in the Gaussian case.\\
Our method is likely adaptable to bounded domains and also to
fractional inequalities in the spirit
of~\cite{MR2746437}. Inspired by the properties of the Gaussian
Poincar\'e inequality, {\em e.g.}, in~\cite{Courtade_2020}, we
finally make the following conjecture.
\begin{conj}[Optimal constants]
  For a given $\phi$
  satisfying~\eqref{hyp:intnorm}--\eqref{hyp:regularity}--\eqref{hyp:poincarebasique}
  with $\int_{\R^d} x_i\,x_j\,e^{-\phi(x)} \dd x=\delta_{ij}$ for
  all $i,j \in \set{1,\cdots, d}$ and
  $D^2 \phi \ge \text{{\normalfont Id}}$, one has $\CPK \ge 2$ and
  $\CK \ge 4$, with equality in the normalized centred Gaussian
  case~\eqref{Gaussian} and only in that case.
\end{conj}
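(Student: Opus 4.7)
My first check is whether the hypotheses are not already degenerate. The Bakry--\'Emery criterion with $D^2\phi \ge \mathrm{Id}$ yields $\CP \le 1$, while testing~\eqref{hyp:poincarebasique} on $f(x) = x_i$ gives $1 = \int x_i^2\,e^{-\phi}\,dx \le \CP\,\|\nabla x_i\|^2 = \CP$, whence $\CP = 1$. Each coordinate function $x_i$ then saturates Poincar\'e and is an eigenfunction of $-\Delta_\phi$ for the eigenvalue~$1$; since $-\Delta_\phi x_i = \partial_i\phi$, this forces $\partial_i\phi(x) = x_i$ for every $i$, and hence $\phi(x) = \tfrac12 |x|^2 + \tfrac{d}{2}\log(2\pi)$ after normalization. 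Read literally, the only admissible potential is therefore the normalized centered Gaussian, the conjecture follows directly from the optimality clause of Theorem~\ref{theo:KPK}, and the ``only in that case'' clause is vacuously true.

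If instead the conjecture is to be understood in a genuinely less restrictive form (for instance keeping only $\langle x_i x_j\rangle = \delta_{ij}$ and replacing $D^2\phi\ge\mathrm{Id}$ by mere convexity), a test-vector-field approach is the natural route. In the Gaussian (take $d = 2$ for clarity), the eigenfunction of $-\Delta_S$ for the smallest positive eigenvalue $\tfrac12$ is $u_0 = -x_1 x_2\,e_1 + (x_1^2 - 1)\,e_2$: it is $\mathrm{div}_\phi$-free, orthogonal to $\R^d\oplus\RR$, and realizes $\|u_0\|^2 = 2\|D^s u_0\|^2 = 3$ together with $\|D^a u_0 - \fP(Du_0)\|^2 = 3\|D^s u_0\|^2 = \tfrac92$, saturating $\CPK=2$ and $\CK=4$ simultaneously. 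For general $\phi$ I would take this $u_0$ (or its higher-dimensional analogue), subtract its $L^2(\mu)$-projection onto $\R^d\oplus\RR_\phi$ (which does not affect $D^s u_0$), and then compute the Rayleigh quotient using the integration-by-parts identity
\begin{equation*}
  2\,\|D^s u\|^2 = \|Du\|^2 + \|\mathrm{div}_\phi u\|^2 - \langle D^2\phi\,u,u\rangle.
\end{equation*}
Combined with $D^2\phi \ge \mathrm{Id}$, this gives $2\|D^s u\|^2 \le \|Du\|^2 + \|\mathrm{div}_\phi u\|^2 - \|u\|^2$; and once $\CPK\ge 2$ is secured, the bound $\CK\ge 4$ follows from the orthogonal decomposition $\|Du - \fP Du\|^2 = \|D^s u\|^2 + \|D^a u - \langle D^au\rangle\|^2$ together with $\|D^a u\|^2 = \|Du\|^2 - \|D^s u\|^2 \ge \|u\|^2 + \|D^s u\|^2 \ge 3\|D^s u\|^2$.

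The main obstacle in this non-trivial interpretation is the moment inequality underlying the test-function step: after subtracting the rotation projection, the bound $\|u_0 - \P u_0 - \langle u_0\rangle\|^2 \ge 2\|D^s u_0\|^2$ reduces to the sharp fourth-moment estimate
$\langle x_1^4\rangle + \langle x_1^2 x_2^2\rangle - \tfrac12\bigl(\langle x_1^3\rangle + \langle x_1 x_2^2\rangle\bigr)^2 \ge 4$,
saturated only by the Gaussian moments, which goes beyond what Brascamp--Lieb directly provides and would require a stability/rigidity argument. The equality case in the conjecture would then be handled by chasing the saturation in Bakry--\'Emery or Brascamp--Lieb, applied both to $u_0$ and to the coordinate functions, back to the pointwise identities $\partial_i\phi = x_i$ that characterize the Gaussian.
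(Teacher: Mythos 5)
This statement is posed in the paper as an open conjecture; the paper contains no proof of it, so there is nothing to compare your argument against and I can only assess it on its own merits. Your first observation is, as far as I can check, correct: under~\eqref{hyp:intnorm} the coordinate $x_i$ has zero mean and unit variance, so it attains the value $1$ of the Rayleigh quotient, while $D^2\phi\ge\Id$ gives the Bakry--\'Emery bound $\CP\le1$; hence $\CP=1$, and since $x_i$ lies in the form domain, is orthogonal to constants and attains the bottom of the spectrum of $-\Delta_\phi$ on the orthogonal of constants, the spectral theorem forces $-\Delta_\phi x_i=\partial_i\phi=x_i$, i.e.\ $\phi$ is~\eqref{Gaussian}. (A cross-check: Brascamp--Lieb gives $1=\seq{x_i^2}\le\seq{((D^2\phi)^{-1})_{ii}}\le1$, forcing $D^2\phi\equiv\Id$.) So, read literally, the admissible class is the single potential~\eqref{Gaussian} and the statement collapses to the Gaussian computations of Theorem~\ref{theo:KPK} and Appendix~\ref{Appendix:B5}, with the ``only in that case'' clause vacuous. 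This is a legitimate and useful remark about the formulation --- the hypotheses as written are rigid and the conjecture should be restated over a genuinely larger class --- but it settles the statement only in this degenerate sense and does not address the extremality/rigidity question the authors plainly intend (in the spirit of the stability results they cite just before the conjecture).

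For the non-degenerate reading your sketch is not a proof, and the decisive gap is the one you name: the fourth-moment inequality $\seq{x_1^4}+\seq{x_1^2x_2^2}-\tfrac12\big(\seq{x_1^3}+\seq{x_1x_2^2}\big)^2\ge4$. Your reduction to it is correct (I verified the computation of $\norm{u_0-\P(u_0)-\seq{u_0}}^2$ and $\norm{D^su_0}^2$), but in the relaxed class you propose (isotropic covariance plus mere convexity) the inequality is false: the product measure with $\phi(x)=\alpha\,(x_1^4+x_2^4)+\beta$, centred and normalized to unit covariance, satisfies~\eqref{hyp:intnorm}--\eqref{hyp:regularity}--\eqref{hyp:poincarebasique}, has vanishing odd moments, $\seq{x_1^2x_2^2}=1$ and $\seq{x_1^4}\approx2.19<3$, so the left-hand side is about $3.19<4$ (a smoothed uniform measure on a centred square gives $2.8$). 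Thus the single competitor $u_0$ cannot certify $\CPK\ge2$ beyond the Gaussian, and failure of this test field of course neither proves nor disproves the relaxed conjecture. Moreover the passage ``once $\CPK\ge2$ is secured, $\CK\ge4$ follows'' is not an implication between the constants: your chain $\norm{D^au}^2\ge\norm u^2+\norm{D^su}^2$ needs both $\Divphi u=0$ (true for $u_0$ only in the Gaussian case) and $D^2\phi\ge\Id$ (which, by your own first paragraph, collapses the class), so it would have to be rebuilt for a near-optimal field in the general setting. As it stands, the proposal disposes of the literal, vacuous statement and leaves the intended conjecture open; a proof would seem to require either a richer family of test fields or a quantitative stability argument (Bakry--\'Emery or Brascamp--Lieb with deficit), as you yourself anticipate.
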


\subsection{Outline of the paper}

In Section~\ref{sec:Gaussian} we prove Theorem~\ref{theo:KPK} in
the simple case of Gaussian potentials. This has a pedagogical
interest but also an interest {\em per se} as the method captures
some (conjectured) optimal
constants. Section~\ref{sec:wittenpoincare} is devoted to
classical results on the Witten-Laplace operator on functions and
a sketch of the proof of Poincar\'e inequalities under
assumptions~\eqref{hyp:intnorm}--\eqref{hyp:regularity}--\eqref{hyp:poincarebasique}
with some short quantitative proofs for which we lack of
references. In Section~\ref{sec:general} we prove
Theorems~\ref{theo:KPK} and~\ref{theo:PKPK} in the general
case. Section~\ref{sec:ao} is devoted to the functional analysis
of operators (Theorem~\ref{theo:ao}) associated with various
quadratic forms under consideration. We prove
Theorem~\ref{theo:KPK0} on zeroth order Korn inequalities in
Section~\ref{sec:kornzero}. Appendix~\ref{Appendix:A} is devoted
to generalizations, a discussion of the measure of the defects of
axisymmetry by rigidity constants, and an elementary application
of our main results to a simple kinetic equation with multiple
conservations laws. For the convenience of the reader, some
computational details are collected in Appendix~\ref{Appendix:B}.

\section{Proof of the Korn inequalities of Theorem~\ref{theo:KPK}
  in the Gaussian case}
\label{sec:Gaussian}

Inspired by the proof of~\eqref{eq:KornOmega}, we first prove
inequalities~\eqref{eq:WKfull} and~\eqref{eq:WPKfull} of
Theorem~\ref{theo:KPK} for the normalized Gaussian measure, and
establish the optimality of the constants in that case. We begin
with two useful identities valid for a general function
$\phi\in\mathrm W^{2,\infty}_{\mathrm{loc}}(\R^d)$ and any
$u \in C^1_c(\R^d;\R^d)$,
\begin{eqnarray}
  \label{eq:weakKorn}
  &&\|\nablaskew u\|^2+\|(\nabla-\nabla\phi)\cdot u\,\|^2
     =\|\nablasym u\|^2+\int_{\R^d} D^2 \phi : u \otimes u\,
     e^{-\phi} \dd x\,,\\
  \label{eq:weakKorn2}
  &&\|\nablaD u\|^2 \le 2\,\|\nablasym u\|^2+\int_{\R^d}
     D^2 \phi : u \otimes u\,e^{-\phi} \dd x\,.
\end{eqnarray}
Identity~\eqref{eq:weakKorn} is obtained by a simple integration
by parts, a commutation and the Schwarz Theorem
(or~\eqref{eq:Pointwise1} integrated against $e^{-\phi}$),
while~\eqref{eq:weakKorn2} follows from
$|\nablaD u|^2=|\nablasym u|^2+|\nablaskew u|^2$.

\medskip In the remainder of this section, let us focus on the
{\em Gaussian case}~\eqref{Gaussian} such that
\begin{equation*}
  e^{-\phi(x)}=(2\pi)^{-d/2}\,e^{-\frac12|x|^2}
\end{equation*}
is the standard centred normalized Gaussian. This is the only
Gaussian function satisfying hypotheses~\eqref{hyp:intnorm} with
the additional normalization $\seq{D^2 \phi}=\Id$, and it
satisfies~\eqref{hyp:regularity} with $\varepsilon=0$ and
$C_\varepsilon=C_0=d$ and it
satisfies~\eqref{hyp:poincarebasique} with $\CP=1$. We first
recall the following improved version of the Poincar\'e
inequality.
\begin{lem}[Improved Poincar\'e inequality]
  \label{lem:improvedpoincare}
  Assume~\eqref{Gaussian}. Then for any $u\in H^1$ such that
  $\langle u_i\,x_j \rangle=\langle u_i \rangle=0$ with
  $i,j=1,\ldots,d$, there holds
  \begin{equation}
    \label{eq:improvedpoincare}
    2\,\norm{u}^2 \le\norm{\nablaD u}^2.
  \end{equation}
\end{lem}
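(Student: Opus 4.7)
The plan is to exploit the complete spectral decomposition of the Ornstein--Uhlenbeck operator $-\Delta_\phi=-\Delta+x\cdot\nabla$, which is the Witten--Laplace operator associated with the standard Gaussian weight~\eqref{Gaussian}. The two orthogonality hypotheses translate exactly into the statement that each component $u_i$ lies in the spectral subspace of $-\Delta_\phi$ on which the spectral gap is $2$ instead of $1$.

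First, I would record the identity obtained coordinate by coordinate from integration by parts against $e^{-\phi}\dd x$: for a smooth scalar function $f$ with enough decay,
\begin{equation*}
  \|\nabla f\|^2 = \langle f,-\Delta_\phi f\rangle,
\end{equation*}
which applied to each $u_i$ and summed yields
\begin{equation*}
  \|Du\|^2 = \sum_{i=1}^d \|\nabla u_i\|^2 = \sum_{i=1}^d \langle u_i,-\Delta_\phi u_i\rangle.
\end{equation*}
For a general $u\in H^1$ this identity is made rigorous by density of smooth compactly supported vector fields, or equivalently by invoking the essential self-adjointness of $-\Delta_\phi$ guaranteed later in Theorem~\ref{theo:ao}.

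Second, I would use the classical fact that in the Gaussian setting the multivariate (probabilists') Hermite polynomials $\{H_\alpha\}_{\alpha\in\N^d}$ form a complete orthogonal basis of $L^2(e^{-\phi}\dd x)$ and are eigenfunctions of $-\Delta_\phi$ with $-\Delta_\phi H_\alpha = |\alpha|\,H_\alpha$. In particular the eigenspace for eigenvalue $0$ is spanned by constants and the one for eigenvalue $1$ is $\mathrm{span}\{x_1,\ldots,x_d\}$ (since a direct computation gives $-\Delta_\phi x_j = x_j$). The assumptions $\langle u_i\rangle=0$ and $\langle u_i\,x_j\rangle=0$ for all $i,j$ are therefore exactly the $L^2$-orthogonality of each $u_i$ to these two lowest eigenspaces, so each $u_i$ lies in the spectral subspace of $-\Delta_\phi$ on which $-\Delta_\phi\ge 2\,\mathrm{Id}$. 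Hence
\begin{equation*}
  \langle u_i,-\Delta_\phi u_i\rangle \ge 2\,\|u_i\|^2 \quad \text{for every } i,
\end{equation*}
and summing over $i=1,\ldots,d$ gives $\|Du\|^2 \ge 2\,\|u\|^2$.

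There is no serious obstacle in this argument; the only delicate point is to justify the spectral decomposition and the integration by parts at the level of $H^1$, which is standard for the Ornstein--Uhlenbeck operator and is in any case subsumed by Theorem~\ref{theo:ao}. If one wished to avoid Hermite polynomials altogether, the same conclusion follows from the elementary ladder structure of the creation/annihilation operators $A_j=\partial_j$ and $A_j^*=x_j-\partial_j$, which satisfy $-\Delta_\phi=\sum_j A_j^*A_j$, $[A_j,A_k^*]=\delta_{jk}$ and $[-\Delta_\phi,A_j]=-A_j$; these relations force the spectrum of $-\Delta_\phi$ to consist of nonnegative integers and identify the eigenspaces at levels $0$ and $1$ as claimed, which is all that is needed.
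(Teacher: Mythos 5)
Your proof is correct and follows essentially the same route as the paper: both arguments rest on the spectral decomposition of the Ornstein--Uhlenbeck operator, whose spectrum is $\N$ with eigenspaces $\R$ (eigenvalue $0$) and $\mathrm{span}\{x_1,\ldots,x_d\}$ (eigenvalue $1$), so that the orthogonality hypotheses place each $u_i$ in the spectral subspace where $-\Delta_\phi\ge 2\,\mathrm{Id}$. The only cosmetic difference is that you obtain the spectrum via the Hermite basis (or the ladder-operator relations), while the paper cites the conjugation $e^{-\phi/2}\,(-\Delta_\phi)\,e^{\phi/2}$ to the harmonic oscillator; the content is identical.
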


This result is standard: the operator $-\Delta_\phi$ reduces
after conjugation by $e^{-\phi/2}$ to the harmonic oscillator
$P_\phi=-\,\Delta+|x|^2/4-d/2$ which has a discrete spectrum made
of all nonnegative integers (see Section~\ref{sec:wittenpoincare}
for the definition of the operator). The lowest eigenvalue is $0$
with multiplicity $1$ and the first positive eigenvalue is $1$
with multiplicity $d$ and eigenfunctions $x_j$,
$j=1,2,\ldots,d$. Conditions on $u$ amount to the orthogonality
condition to these two eigenspaces, so that $2$ corresponds to
the next eigenvalue. The result follows from the spectral theorem
(see for instance to~\cite[Lemma 2 of Chapter V and
Chapter~8]{RS75} or~\cite{CFKS87}).

\begin{proof}[Proof of Theorem~\ref{theo:KPK} in the Gaussian
  case]
  Since
  $D^s (\langle D^a u \rangle\,x)=D^s(\langle u
  \rangle)=D(\langle u \rangle)=0$ and
  $\P(\langle D^a u \rangle\,x)=\langle D^a u \rangle\,x$, it is
  enough to prove the inequalities for a vector field $u \in H^1$
  such that $\langle \nablaskew u\rangle=0$ and
  $\langle u \rangle=0$. We have to show that
  \begin{equation}\label{GaussianRelations}
    \norm{u}^2 \leq 2 \norm{D^s u}^2\quad \mbox{and}
    \quad \norm{Du}^2 \leq 4 \norm{D^s u}^2.
  \end{equation}
  Let us define the {\em corrected vector field} $v \in H^1$ by
  \begin{equation*}
    v(x) :=u(x)-B\,x\quad\mbox{where}\quad
    B_{ij} :=\langle u_i\,x_j\,\rangle\,,
  \end{equation*}
  and note the elementary property (using that $e^{-\phi} \dd x$
  is Gaussian)
  \begin{equation*}\label{eq:ippgaussian}
    B_{ij}=\langle u_i\,x_j \rangle=
    \int_{\R^d} u_i\,x_j\,e^{-\phi} \dd x=
    \int_{\R^d} \D_j u_i\,e^{-\phi} \dd x=\seq{D u}_{ij}.
  \end{equation*}
  This implies that the matrix $B$ is symmetric since
  $\langle \nablaskew u\rangle=0$ and that $v$ satisfies
  $\langle v_i\,x_j \rangle=\langle v_i \rangle=0$ for all
  $i,j=1, \ldots, d$. We can then apply the improved Poincar\'e
  inequality~\eqref{eq:improvedpoincare} in
  Lemma~\ref{lem:improvedpoincare} to $v$ and get
  \begin{equation*}
    2\left\|v \right\|^2 \le \left\|\nablaD v \right\|^2.
  \end{equation*}
  Using this together with~\eqref{eq:weakKorn2} and
  $D^2 \phi=\text{Id}$, we obtain
  $2\,\|v\|^2 \le\|\nablaD v\|^2 \le 2\,\|\nablasym
  v\|^2+\|v\|^2$ which implies $\|v\|^2 \le 2\,\|\nablasym v\|^2$
  and $\|D v\|^2 \le 4\,\|\nablasym v\|^2$,
  \ie,~\eqref{GaussianRelations} written for $v$. Next we compute
  \begin{align*}
    &\|D u\|^2=\|D v+B\|^2=\|D^s v+B\|^2+\|D^a v\|^2 =
      \|D v\|^2+|B|^2+2 \int_{\R^d} \left( D^s v : B \right)\,
      e^{-\phi} \dd x\,,\\
    &\|D^s u\|^2=\|D^s v+B\|^2=\|D^s v\|^2+|B|^2 +
      2 \int_{\R^d} \left( D^s v : B \right)\,e^{-\phi} \dd x\,,\\
    &\|u\|^2=\|v+B\,x\|^2=\|v\|^2+|B|^2 +
      2 \int_{\R^d} \left( v\cdot B\,x \right)\,e^{-\phi} \dd x=
      \|v\|^2+|B|^2,
  \end{align*}
  where we used the fact that $(D^s u+B)$ and $D^a u$ are
  orthogonal in $L^2$ and $\langle v_i\,x_j \rangle=0$. By an
  integration by parts, we obtain
  \begin{equation*}\label{eq:cross-nul}
    \int_{\R^d} \left( \nablasym v : B \right)\,e^{-\phi} \dd x=
    \frac12 \sum_{i,j} B_{ij} \int_{\R^d} (\partial_i v_j+\partial_j
    v_i)\,
    e^{-\phi} \dd x=
    \frac12 \sum_{i,j} B_{ij} \int_{\R^d}
    \left(x_i\,v_j+x_j\,v_i\right )\,e^{-\phi} \dd x=0
  \end{equation*}
  using again $\langle v_i\,x_j \rangle=0$. Altogether, we deduce
  that
  \begin{align*}
    \|D u\|^2=\|D v\|^2+|B|^2,\quad\|D^s u\|^2=\|D^s v\|^2+|B|^2,
    \quad\|u\|^2=\|v\|^2+|B|^2.
  \end{align*}
  We deduce~\eqref{GaussianRelations} on $u$
  from~\eqref{GaussianRelations} on $v$ and the last equations,
  which proves~\eqref{eq:WKfull} with $\CK\le4$
  and~\eqref{eq:WPKfull} with $\CPK\le2$.  To
  saturate~\eqref{GaussianRelations} it is enough to search for
  $u=v$ with $B=0$.  With
  $u(x)=(1-x_2^2,x_1\,x_2,0,\ldots,0)^\perp$, an elementary
  computation (see details in Appendix~\ref{Appendix:B5}) shows
  that
  $$
  \seq u=0,\quad\|u\|^2=3\,,\quad\P(u)=0,\quad\fP
  (Du)=0=\seq{D^au}\,,\quad\|D^su\|^2=\frac32\,,\quad\|D^au\|^2=\frac92\,,\quad\|Du\|^2=6\,.
  $$
  This completes the proof of~\eqref{eq:WKfull} with $\CK=4$
  and~\eqref{eq:WPKfull} with $\CPK=2$.

  It remains to establish~\eqref{eq:WPKstrong}. By expanding the
  square $\int_{\R^d}|D(u\,e^{-\phi/2})|^2 \dd x$ as
  in~\cite[ineq.~(4)]{Dolbeault_2012}, we obtain after one
  integration by parts that
  $$
  \int_{\R^d}|x|^2\,|u(x)|^2\,e^{-\phi(x)}\dd x\le4\int_{\R^d}|D
  u|^2\,e^{-\phi}\dd x+2\,d\int_{\R^d}|u|^2\,e^{-\phi}\dd x\,.
  $$
  Combined with~\eqref{eq:WKfull} and~\eqref{eq:WPKfull}, this
  completes the proof with $\CPK\le\CSPK\le2\,(2\,d+9)$.
\end{proof}

\section{The Witten-Laplace operator on scalar functions and Poincar\'e inequalities}\label{sec:wittenpoincare}

Here we consider the Poincar\'e inequalities of
Proposition~\ref{prop:poincares} and some related properties of
the Witten-Laplace operator $\Delta_\phi$, as defined
in~\eqref{def:Deltaphi}, in the case of a general probability
measure $e^{-\phi}\dd x$ with a potential~$\phi$ such that
assumptions~\eqref{hyp:intnorm}--\eqref{hyp:regularity}--\eqref{hyp:poincarebasique}
are fulfilled. Some results of this section are classical and we
claim no originality. For a general theory of self-adjoint
operators, we refer for instance to~\cite{Sim78,RS75} and we
refer to~\cite{Wit82,HS94,Sjo96,Joh00} or~\cite{HN05} for more
details on Witten-Laplace operators. Proofs are given when we are
not aware of any precise reference or when we look for explicit
estimates.

\subsection{Two toolboxes and the proof of the strong Poincar\'e
  inequality
  (Proposition~\texorpdfstring{\ref{prop:poincares}}{Proposition5})}\label{Sec:Toolboxes}

For all functions $f$, $g\in \ccc_c^\infty(\R^d; \R)$, we have by
integration by parts
\begin{equation*}
  \label{eq:ipp}
  (-\Delta_\phi f, g)=-\,\sep{\nablaphi\cdot \nabla f, g}=
  (\nabla f, \nabla g)\,,
\end{equation*}
so that $-\nablaphi=-\,\nabla+\nabla\phi$ is the formal adjoint
of $\nabla$, $-\Delta_\phi$ is nonnegative and symmetric, and
$\Lambda$, as defined by~\eqref{Lambda}, is symmetric. The
Lax-Milgram theorem allows us to solve in $H^1$, equipped with
the norm $f \mapsto (\norm{f}^2+\norm{\nabla f}^2)^{1/2}$, the
problem $\Lambda f=\xi$ for any given $\xi \in H^{-1}$, and to
build a self-adjoint extension of $\Lambda$ associated to the
coercive bilinear form
$(f,g) \mapsto (\nabla f, \nabla g)+(f,g)$. On the other hand, by
the well-known change of function $f\mapsto e^{-\phi/2}f$,
$\Lambda$ is conjugated~to
\begin{equation*}
  \label{eq:conjugaison}
  P_\phi :=e^{-\phi/2}\,\Lambda\,e^{-\phi/2} =
  -\,\Delta+\tfrac14\,|\nabla\phi|^2-\tfrac12\,\Delta \phi+1
\end{equation*}
acting on the usual space $L^2(\dd
x)$. From~\eqref{hyp:regularity}, we get that
$|\nabla\phi|^2/4-\Delta \phi/2$ is bounded from below. From
Kato's result~\cite{Kat72} (also see, \eg,~\cite[Theorem
X-28]{RS75}), this implies that $\Lambda$ has a unique Friedrichs
self-adjoint extension such that $\ccc_c^\infty(\R^d;\R)$ is
dense in its domain w.r.t.~the graph norm, that is, $\Lambda$~is
{\em essentially self-adjoint}. For notational simplicity, we use
the same name for the operator and for its extension. We denote
by $\ddd(\Lambda)$ the domain of $\Lambda$.

Hence $\ccc_c^\infty(\R^d;\R)$ is a core for the self-adjoint
operator $\Lambda \geq \Id$, which has a one-to-one operator
extension from $H^1$ to $H^{-1}$. Tools of functional calculus
and spectral analysis apply. This gives sense to $\Lambda^\sigma$
with domain $\ddd(\Lambda^\sigma)$ for all $\sigma\in \R$. For
instance, $\ddd(\Lambda^{1/2})=H^1$ and $\Lambda^{1/2}$ has a
bounded one-to-one operator extension from $L^2$ to $H^{-1}$ by
duality. Recall that no specific growth, apart from the general
condition~\eqref{hyp:regularity}, is assumed on $|\nabla\phi|$ at
infinity in the computations of this section
(see~\cite{HN04,HN05,MR913672} for other results without growth
condition). Let us show that~\eqref{hyp:regularity} implies that
$\wgt\,f$ is square integrable whenever $f\in H^1$, which allows
to make sense of $\norm{\nabla\phi\cdot u}$ in
inequalities~\eqref{eq:WPK} and~\eqref{eq:WK}. 
\begin{prop}[$H^1$ toolbox]
  \label{prop:H1}
  Assume~\eqref{hyp:regularity}. Then the space $H^1$ is
  \begin{equation*}
    H^1 =
    \set{f\in L^2 \ : \ \nabla f \in L^2\ \mbox{{\normalfont and}}
      \ \wgt\,f \in L^2}
  \end{equation*}
  and for any $f \in L^2$, we have the inequalities
  \begin{equation}\label{eq:toolboxH1}
    \big\|\nabla\Lambda^{-1/2}\,f\big\|^2 \leq\|f\|^2,\quad
    \big\|\wgt\,\Lambda^{-1/2}\,f\big\|^2 \leq \Cphi\,\|f\|^2,
  \end{equation}
  \begin{equation}\label{eq:toolboxH1adj}
    \big\|\Lambda^{-1/2}\,\nabla f\big\|^2 \leq\|f\|^2,\quad
    \big\|\Lambda^{-1/2}\,\wgt\,f\big\|^2 \leq \Cphi\,\|f\|^2.
  \end{equation}
\end{prop}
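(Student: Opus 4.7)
The plan is to prove the two primary bounds in \eqref{eq:toolboxH1} directly and then deduce the dual estimates in \eqref{eq:toolboxH1adj} by taking Hilbert-space adjoints, with the characterization of $H^1$ following by combining everything with the identity $\|\Lambda^{1/2} f\|^2 = \|f\|^2 + \|\nabla f\|^2$ valid on $H^1$.

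The first estimate $\|\nabla \Lambda^{-1/2} f\|^2 \leq \|f\|^2$ is immediate: setting $g := \Lambda^{-1/2} f \in H^1$, one has $\|\nabla g\|^2 = (-\Delta_\phi g, g) = (\Lambda g, g) - \|g\|^2 \leq \|\Lambda^{1/2} g\|^2 = \|f\|^2$. The main work lies in the second estimate $\|\wgt \Lambda^{-1/2} f\|^2 \leq \Cphi \|f\|^2$; with $g = \Lambda^{-1/2} f$ taken in the core $\ccc_c^\infty(\R^d;\R)$, the target is $\|\wgt g\|^2 \leq \Cphi(\|g\|^2 + \|\nabla g\|^2)$. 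An integration by parts based on $\nabla e^{-\phi} = -\nabla\phi\, e^{-\phi}$ yields
\[
\int_{\R^d} |\nabla\phi|^2 g^2 e^{-\phi} \dd x = \int_{\R^d} \Delta\phi\, g^2 e^{-\phi} \dd x + 2 \int_{\R^d} g\,\nabla\phi \cdot \nabla g\, e^{-\phi} \dd x.
\]
The cross term is absorbed by Young's inequality into $\tfrac12 \int_{\R^d} |\nabla\phi|^2 g^2 e^{-\phi} \dd x + 2\|\nabla g\|^2$, while the Laplacian term is handled via $|\Delta\phi| \leq \sqrt d\,|D^2\phi|$ together with \eqref{eq:cphi}, which produces $|\Delta\phi| \leq \tfrac14|\nabla\phi|^2 + \tfrac{\Cphi - 1}{4}$. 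Rearranging gives $\int_{\R^d} |\nabla\phi|^2 g^2 e^{-\phi} \dd x \leq (\Cphi - 1)\|g\|^2 + 8\|\nabla g\|^2$, so adding $\|g\|^2$ and invoking $\Cphi > 8$ closes the estimate.

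The dual inequalities in \eqref{eq:toolboxH1adj} then follow by duality. Since both multiplication by $\wgt$ and the operator $\Lambda^{-1/2}$ are self-adjoint on $L^2(e^{-\phi}\dd x)$, one has $(\wgt \Lambda^{-1/2})^* = \Lambda^{-1/2}\wgt$, which inherits the bound $\sqrt{\Cphi}$. For the gradient estimate, $\|\Lambda^{-1/2}\nabla f\|$ equals the $H^{-1}$-norm of $\nabla f$ via the isometric extension $\Lambda^{-1/2}: H^{-1} \to L^2$, so the inequality amounts to $\|\nabla f\|_{H^{-1}} \leq \|f\|_{L^2}$; this is obtained from the integration-by-parts identity $(\nabla f, \bu) = -(f, \nabla_\phi \cdot \bu)$ combined with the commutation $\nabla \Lambda - \Lambda \nabla = D^2\phi \cdot \nabla$ (where $\Lambda$ acts on scalars on the right and coordinate-wise on vectors on the left), whose remainder is controlled through \eqref{hyp:regularity}. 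The characterization of $H^1$ is then immediate: $f \in H^1$ means $\|\Lambda^{1/2} f\|^2 = \|f\|^2 + \|\nabla f\|^2 < \infty$, and applying the second inequality of \eqref{eq:toolboxH1} to $g = f$ yields $\wgt f \in L^2$; conversely, $f \in L^2$ with $\nabla f, \wgt f \in L^2$ obviously lies in $H^1$.

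The main obstacle is the quantitative integration-by-parts estimate underlying the second bound in \eqref{eq:toolboxH1}: the absorption argument closes only because $\Cphi > 8$, the numerical constraint built into \eqref{eq:cphi}.
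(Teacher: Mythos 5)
Your handling of \eqref{eq:toolboxH1} and of the second bound in \eqref{eq:toolboxH1adj} is correct and is essentially the paper's own argument: the gradient bound is the same quadratic-form inequality, your integration by parts
$\int|\nabla\phi|^2g^2e^{-\phi}\dd x=\int\Delta\phi\,g^2e^{-\phi}\dd x+2\int g\,\nabla\phi\cdot\nabla g\,e^{-\phi}\dd x$
together with \eqref{eq:cphi} and $\Cphi\ge 8$ is exactly the unfolded form of the paper's operator inequality $\wgt^2\le-8\,\Delta_\phi+\Cphi\,\Id\le\Cphi\,\Lambda$, and the bound for $\Lambda^{-1/2}\wgt$ by taking the adjoint of $\wgt\,\Lambda^{-1/2}$ is the paper's duality step. (Do state the intermediate estimate as $\|\wgt\,g\|^2\le\Cphi\,\|\Lambda^{1/2}g\|^2$ for $g$ in a core, extend by density to $H^1$, and only then substitute $g=\Lambda^{-1/2}f$; one cannot literally take $\Lambda^{-1/2}f$ in $\ccc_c^\infty$.)

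The genuine gap is the first inequality of \eqref{eq:toolboxH1adj}. Duality does not produce it: since $\nabla^*=-\,\nablaphi\cdot$ in $L^2(e^{-\phi}\dd x)$, the adjoint of the bounded operator $\nabla\Lambda^{-1/2}$ is $-\,\Lambda^{-1/2}\,\nablaphi\cdot$ acting on \emph{vector fields}, not $\Lambda^{-1/2}\nabla$ acting on scalars. Your fallback, commuting via $[\Lambda,\nabla]=-\,D^2\phi\,\nabla$ and ``controlling the remainder through \eqref{hyp:regularity}'', cannot give the stated constant $1$: any such control of the $D^2\phi$ term costs a factor of order $1+\tfrac14\,\Cphi'\sqrt{\CB}$, which is precisely what the paper gets when it bounds $\Lambda^{1/2}\nabla\Lambda^{-1}$ in \eqref{eq:estimatecpl1new} and $\Lambda^{-1}\nabla\Lambda^{1/2}$ in Lemma~\ref{LemPL-1}. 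In fact, by taking the true adjoint, $\|\Lambda^{-1/2}\nabla f\|\le\|f\|$ for all $f$ is equivalent to $\|\nablaphi\cdot u\|^2\le\|u\|^2+\|Du\|^2$ for all vector fields $u\in H^1$, and an integration by parts gives
\begin{equation*}
  \|\nablaphi\cdot u\|^2=\sum_{i,j=1}^d\big(\partial_ju_i,\partial_iu_j\big)
  +\int_{\R^d}D^2\phi\,u\cdot u\,e^{-\phi}\dd x\,;
\end{equation*}
testing with $u=\nabla g$ reduces the unit-constant requirement to $\int_{\R^d}D^2\phi\,\nabla g\cdot\nabla g\,e^{-\phi}\dd x\le\|\nabla g\|^2$ for every $g$, i.e.\ essentially to $D^2\phi\le\Id$, which is not implied by \eqref{hyp:regularity} (think of $\phi\sim|x|^4$). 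So your sketch proves at best $\|\Lambda^{-1/2}\nabla f\|\le C\,\|f\|$ with a structural constant $C>1$ depending on $\Cphi$, $\Cphi'$, $\CB$ --- which is all that the later sections (e.g.\ the proof of \eqref{eq:poincarelions} and of Theorem~\ref{theo:KPK}) actually need, at the price of modified explicit constants --- but it does not prove the inequality as stated, and the key commutator estimate is left unproved. Be aware that the paper's own one-line justification (``by considering the adjoint operators'') covers the $\wgt$ inequality but is open to the same objection for the gradient one; if you carry out your commutator plan quantitatively along the lines of \eqref{eq:estimatecpl1new}, you obtain a correct substitute with an explicit constant.
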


\begin{proof}

  For all $f \in D(\Lambda)$ we have
  $\norm{\nabla f}^2 \leq (\Lambda
  f,f)=\|\Lambda^{1/2}\,f\|^2$. By density of $D(\Lambda)$ in
  $H^1$ we get $\norm{\nabla f}^2 \leq\|\Lambda^{1/2}\,f\|^2$ for
  all $f \in H^1$ and applying this inequality to
  $\Lambda^{-1/2}\,f \in H^1$ proves the first inequality
  in~\eqref{eq:toolboxH1}.

  Let us note that
  $0\leq|\nabla\phi|^2-4\,\sqrt
  d\,|D^2\phi|+\Cphi-1\leq|\nabla\phi|^2-4\,\Delta\phi+\Cphi-1$
  because $\Delta\phi\le\sqrt d\,|D^2\phi|$ and according
  to~\eqref{eq:cphi}, so that
  \begin{equation*}
    \wgt^2 \leq
    8\(\tfrac14\,|\nabla\phi|^2-\tfrac12\,\Delta \phi\)+\Cphi\,.
  \end{equation*}
  As a consequence, we get the operator inequality
  $\wgt^2 \leq -\,8\,\Delta_\phi+\Cphi\,\Id \leq \Cphi\,\Lambda$
  using the fact that the usual Laplacian $-\Delta$ is
  nonnegative on $L^2(\dd x)$ and $\Cphi\ge8$. This implies that,
  for all $f \in D(\Lambda)$, we have that $\wgt\,f$ is in $L^2$
  and
  $\norm{\wgt\,f}^2 \leq \Cphi\,(\Lambda
  f,f)=\Cphi\,\|\Lambda^{1/2}\,f\|^2$. By density of $D(\Lambda)$
  in $H^1$, we~get
  \begin{equation}\label{eq:phimajparnabla}
    \forall\,f \in H^1,\quad\big\|\wgt\,f\big\|^2 \leq
    \Cphi\,\big\|\Lambda^{1/2}\,f\big\|^2.
  \end{equation}
  For any $f\in L^2$, applying~\eqref{eq:phimajparnabla} to
  $\Lambda^{-1/2}\,f \in H^1$ gives the second inequality
  in~\eqref{eq:toolboxH1}. Inequalities
  in~\eqref{eq:toolboxH1adj} are obtained
  from~\eqref{eq:toolboxH1} by considering the adjoint operators.
\end{proof}

\begin{proof}[Proof of the strong Poincar\'e
  inequality~\eqref{eq:strongpoincare}] So far we did not
  use~\eqref{hyp:poincarebasique} and its spectral
  consequences. Using the density of $\ccc_c^\infty(\R^d,\R)$ in
  $D(\Lambda)$ and~\eqref{hyp:poincarebasique}, we get that $0$
  is an isolated eigenvalue of $-\Delta_\phi=\Lambda-1$ with
  associated eigenspace
  $\R$. Inequality~\eqref{eq:strongpoincare} follows
  from~\eqref{eq:phimajparnabla} applied to $f-\seq{f}$
  and~\eqref{hyp:poincarebasique}, with
  $\CSP \leq \Cphi\,(1+\CP)$.
\end{proof}

The following toolbox is a key step in proof of the {\em
  Poincar\'e-Lions} inequality~\eqref{eq:poincarelions}.
\begin{prop}[$D(\Lambda)$-Toolbox]
  \label{prop:domain}
  Assume~\eqref{hyp:intnorm} and~\eqref{hyp:regularity}. Then
  \begin{equation*}
    \ddd(\Lambda)=\Big\{ f\in L^2\,:\,\big\|\wgt^2\,f\big\| +
    \big\|\wgt\,\nabla f\big\|+\big\|D^2f\big\|<+\infty \Big\}\,
  \end{equation*}
  and there exists a positive constant $\CB$ depending only on
  $\Cphi$, $\Cphi'$ and $d$ such that, for any $f\in L^2$,
  \begin{equation}\label{eq:toolbox}
    \big\|D^2 \Lambda^{-1} f \big\|^2+
    \big\|\wgt\,\nabla\Lambda^{-1} f \big\|^2+
    \big\|\wgt^2\,\Lambda^{-1} f \big\|^2 \leq \CB\,\|f\|^2,
  \end{equation}
  \begin{equation}\label{eq:toolboxadj}
    \big\|\Lambda^{-1} D^2f \big\|^2+
    \big\|\Lambda^{-1} \wgt\,\nabla f \big\|^2+
    \big\|\Lambda^{-1} \wgt^2\,f \big\|^2 \leq \CB\,\|f\|^2.
  \end{equation}
\end{prop}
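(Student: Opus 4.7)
The plan is to prove \eqref{eq:toolbox} on the core $\ccc_c^\infty(\R^d;\R)$ and then extend by density using the essential self-adjointness of $\Lambda$. Fix $f \in \ccc_c^\infty$ and set $u := \Lambda^{-1} f$, which is smooth with exponential decay by elliptic regularity. Proposition~\ref{prop:H1} together with $\Lambda \ge \Id$ already gives $\|u\|$, $\|\nabla u\|$ and $\|\wgt\,u\|$ all controlled by $\|\Lambda u\|=\|f\|$, so only $\|D^2 u\|$, $\|\wgt\,\nabla u\|$ and $\|\wgt^2 u\|$ remain to estimate. A useful first reduction is obtained by applying~\eqref{eq:phimajparnabla} (in the sharper operator form $\wgt^2 \le -8\Delta_\phi + \Cphi\,\Id$ that appears in its proof) coordinatewise to each $\partial_i u \in H^1$ and summing in $i$:
\begin{equation*}
  \|\wgt\,\nabla u\|^2 \le 8\,\|D^2 u\|^2 + (\Cphi-8)\,\|\nabla u\|^2\,,
\end{equation*}
so that a bound on $\|D^2 u\|$ automatically controls $\|\wgt\,\nabla u\|$.

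The main ingredient is the Bochner-type identity
\begin{equation*}
  \|\Delta_\phi u\|^2 = \|D^2 u\|^2 + \int_{\R^d} D^2\phi(\nabla u,\nabla u)\,e^{-\phi}\,\dd x\,,
\end{equation*}
derived from integration by parts together with the commutator $[\partial_j,\partial_i^{\ast}] = \partial_i\partial_j\phi$, and combined with the expansion $\|\Lambda u\|^2 = \|\Delta_\phi u\|^2 + 2\|\nabla u\|^2 + \|u\|^2$. Using the second (sharper) form of~\eqref{eq:cphi}, the error integral is bounded by $\tfrac{1}{4\sqrt{\Cphi}}\|\wgt\,\nabla u\|^2 + \tfrac{\Cphi'}{4\sqrt{\Cphi}}\|\nabla u\|^2$. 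Substituting the previous reduction produces a prefactor $2/\sqrt{\Cphi}$ in front of $\|D^2 u\|^2$ on the right-hand side, and since $\Cphi>8>4$ we have $2/\sqrt{\Cphi}<1$, so the $\|D^2u\|^2$-term can be absorbed and one obtains
\begin{equation*}
  \|D^2 u\|^2 + \|\wgt\,\nabla u\|^2 \le C(\Cphi,\Cphi')\bigl(\|\Lambda u\|^2 + \|u\|^2\bigr)\,.
\end{equation*}
The delicate point, and the main technical obstacle, is that only the combination of the second (sharper) form of~\eqref{eq:cphi} together with the sharp version of Proposition~\ref{prop:H1} allows this absorption to close; using the first form of~\eqref{eq:cphi} or the weaker $\|\wgt g\|^2\le\Cphi\|\Lambda^{1/2}g\|^2$ would produce prefactors $\Cphi/(4\sqrt d)$ or $\sqrt{\Cphi}/4$ which need not be less than one.

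For the last piece, expand $\|\wgt^2 u\|^2 = \|u\|^2 + 2\||\nabla\phi|\,u\|^2 + \||\nabla\phi|^2 u\|^2$: Proposition~\ref{prop:H1} controls the first two contributions, and the third is handled by the integration by parts
\begin{equation*}
  \||\nabla\phi|^2 u\|^2 = \int_{\R^d} |\nabla\phi|^4\,u^2\,e^{-\phi}\,\dd x = \int_{\R^d} \nabla\!\cdot\!\bigl(|\nabla\phi|^2\,u^2\,\nabla\phi\bigr)\,e^{-\phi}\,\dd x\,,
\end{equation*}
whose divergence expansion produces three cross terms involving $D^2\phi\cdot\nabla\phi$, $u\,\nabla u$ and $\Delta\phi$ respectively, each bounded via~\eqref{eq:cphi}, Cauchy--Schwarz and the previously obtained estimates, with a final absorption of the form $(1-\alpha)\||\nabla\phi|^2 u\|^2 \le \cdots$ with $\alpha<1$. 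This completes the proof of \eqref{eq:toolbox} for $f\in\ccc_c^\infty$, and a density argument extends it to all $f\in L^2$ and also gives the announced characterisation of $\ddd(\Lambda)$. The adjoint bound~\eqref{eq:toolboxadj} is obtained by running the same strategy with $\Lambda^{-1}$ placed on the left in place of the right, using the intertwining $\Lambda\,D^2 = D^2\,\Lambda + [\Lambda,D^2]$ and analogous integration-by-parts and absorption estimates.
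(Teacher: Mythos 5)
Your core estimate for \eqref{eq:toolbox} (the Bochner/$\Gamma_2$ identity $\|\Delta_\phi u\|^2=\|D^2u\|^2+\int D^2\phi(\nabla u,\nabla u)\,e^{-\phi}\dd x$ combined with the sharper form of~\eqref{eq:cphi} and an absorption) is a genuinely different route from the paper, which instead runs a chain of weighted integrations by parts directly in terms of $\xi=\Lambda f$ (estimates \eqref{DVDu}, \eqref{DVDubis}, \eqref{Est:4}, then a separate argument for $\|D^2f\|$ through $\|\Delta f\|$). Your route is viable and arguably cleaner. However, as written there is a genuine gap in the setup: you fix $f\in\ccc_c^\infty(\R^d;\R)$ and work with $u=\Lambda^{-1}f$, asserting that $u$ is ``smooth with exponential decay by elliptic regularity''. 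Elliptic regularity gives only local smoothness; under \eqref{hyp:intnorm}--\eqref{hyp:regularity} there is no growth assumption on $|\nabla\phi|$ and no decay of $\Lambda^{-1}f$ is available. More importantly, the finiteness of $\|\wgt^2u\|$, $\|\wgt\,\nabla u\|$ and $\|D^2u\|$ --- which you need both to justify dropping boundary terms in the Bochner identity and in the divergence identity for $\||\nabla\phi|^2u\|^2$, and to perform the absorptions (one cannot subtract a possibly infinite quantity from both sides) --- is exactly what the proposition is meant to establish, so the argument is circular at this point. The fix is precisely the paper's ordering: prove the a priori estimate \eqref{eq:Phi1estimate} for $u\in\ccc_c^\infty$ in terms of $\|\Lambda u\|$ (all your integrations by parts and absorptions are then legitimate, and your constants close since $2/\sqrt{\Cphi}<1$), and only then extend to $\ddd(\Lambda)$ using that $\ccc_c^\infty$ is a core for the essentially self-adjoint operator $\Lambda$, which yields \eqref{eq:toolbox} for all $f\in L^2$ and the stated characterisation of the domain.

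A second, smaller issue concerns \eqref{eq:toolboxadj}: your plan to ``run the same strategy with $\Lambda^{-1}$ on the left'' via the intertwining $\Lambda\,D^2=D^2\,\Lambda+[\Lambda,D^2]$ cannot work under the stated hypotheses, because $[\Lambda,D^2]$ involves third derivatives of $\phi$, which are not controlled by \eqref{hyp:regularity}. No new estimate is needed: \eqref{eq:toolboxadj} follows from \eqref{eq:toolbox} by duality, taking adjoints of the bounded operators $D^2\Lambda^{-1}$, $\wgt\,\nabla\Lambda^{-1}$ and $\wgt^2\,\Lambda^{-1}$ and using \eqref{eq:cphi} (e.g.\ $|\nabla\wgt|\lesssim\wgt^2$) to absorb the lower-order terms produced when transposing; this is exactly how the paper disposes of it in one line.
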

\begin{proof} Inequality~\eqref{eq:toolboxadj} follows
  from~\eqref{eq:toolbox} by duality
  using~\eqref{hyp:regularity}.

  Let us denote by $\mathcal S$ the subspace of $L^2$ such that
  $D^2f$, $\wgt\,\nabla f$ and $\wgt^2\,f$ are square
  integrable. It is elementary to check that
  $\ddd(\Lambda)\subset\mathcal S$. In order to prove that,
  reciprocally, $\mathcal S\subset\ddd(\Lambda)$, let us argue by
  density of $\ccc_c^\infty(\R^d;\R)$. For any
  $f \in \ccc_c^\infty(\R^d;\R)$, let us prove that
  $\xi=\Lambda f=-\,\Delta_\phi f+f$ is such that
  \begin{equation}
    \label{eq:Phi1estimate}
    \big\|\wgt^2f\big\|^2+\big\|\wgt\,\nabla
    f\big\|^2+\big\|D^2f\big\|^2\leq \CB\,\|\xi\|^2
  \end{equation}
  for some explicit constant $\CB$, so that
  $\ddd(\Lambda)=\mathcal S$ and~\eqref{eq:toolbox} directly
  follow.

  It follows from
  $\norm{f}^2 \leq (\Lambda f,f) \leq \norm{\Lambda f}\,\norm{f}$
  that $\|f\|\leq\|\xi\|$. Similarly, using~\eqref{eq:toolboxH1},
  we have
  \begin{equation}
    \label{eq:Dphifxi}
    \|\wgt\,f\|\leq 
    \Cphi^{1/2}\, \| \Lambda^{1/2} f\| = \Cphi^{1/2}\, (\Lambda
    f,f)^{1/2} \le \Cphi^{1/2}\,\|\xi\|\,.
  \end{equation}
  Next, we estimate
  $\big\|\wgt^2\,f\big\|$. Using~\eqref{eq:toolboxH1} and the
  triangular inequality we have
  \begin{equation*}
    \|\wgt^2\,f\|=\|\wgt\,\wgt\,f\|\leq
    \Cphi^{1/2}\,\|\Lambda^{1/2}\,(\wgt\,f)\|
    \leq \Cphi^{1/2}\,\norm{\nabla(\wgt\,f)}
    +\Cphi^{1/2}\,\norm{\wgt\,f}\,.
  \end{equation*}
  With $\nabla(\wgt\,f)=\wgt\,\nabla f+(\nabla\wgt)\,f$, we get
  \begin{equation*}
    \begin{split}
      \|\wgt^2\,f\|
      & \leq \Cphi^{1/2}\,\norm{\wgt\,\nabla f}+\Cphi^{1/2}\,
      \norm{\wgt\,f}+\Cphi^{1/2}\,\norm{(\nabla\wgt)\,f}\\
      & \leq \Cphi^{1/2}\,\norm{\wgt\,\nabla
        f}+\Cphi\,\norm{\xi}+\Cphi^{1/2}\,\norm{(\nabla\wgt)\,f}
    \end{split}
  \end{equation*}
  by~\eqref{eq:Dphifxi}. Estimate~\eqref{eq:cphi} yields
  \begin{equation}
    \label{Pointwise:nablaphiter}
    \big|\nabla\wgt\big|\le\frac{|D^2\phi|\,|\nabla\phi|}{\wgt}\le
    \tfrac14\,\Cphi^{-1/2}\,\wgt^2+\tfrac14\,\Cphi'\,\Cphi^{-1/2}
  \end{equation}
  and, as a consequence,
  \begin{equation*}
    \begin{split}
      & \big\|\wgt^2\,f\big\|\leq \Cphi^{1/2}\,\norm{\wgt\,\nabla
        f}+\Cphi\,\norm{\xi}+\tfrac14\,\norm{\wgt^2\,f}
      +\tfrac14\,\Cphi' \norm{f}\,,
    \end{split}
  \end{equation*}
  so that using $\Cphi' \geq \Cphi$ and
  $\norm{f} \leq \norm{\xi}$, we get
  \begin{equation}\label{DVDu}
    \big\|\wgt^2\,f\big\|\leq \tfrac43\,\Cphi^{1/2}\,
    \norm{\wgt\,\nabla f}+\tfrac53\,\Cphi' \norm{\xi}\,.
  \end{equation}

  Next we estimate $\big\|\wgt\,\nabla f\big\|^2$ by
  \begin{equation*}
    \label{Eqn:intermCite}
    \begin{split}
      \big\|\wgt\,\nabla f\big\|^2
      &=( \wgt^2\,\nabla f, \nabla f)=
      ( \nabla (\wgt^2\,f), \nabla f)
      - ( (\nabla\wgt^2)\,f, \nabla f)\\
      &=( \wgt^2\,f, \xi-f)- ( (\nabla\wgt^2)\,f, \nabla f)\\
      &\leq ( \wgt^2\,f, \xi)
      - 2\,\big( (\nabla\wgt)\,f, \wgt\,\nabla f\big)\\
      & \leq
      \norm{\wgt^2\,f}\,\norm{\xi}
      +2\,\norm{(\nabla\wgt)\,f}\,\norm{\wgt\,\nabla
        f}\,.
    \end{split}
  \end{equation*}
  Using~\eqref{Pointwise:nablaphiter} and
  $\norm{f} \leq \norm{\xi}$, we get
  \begin{equation}
    \label{DVDubis}
    \big\|\wgt\,\nabla f\big\|^2\leq
    \norm{\wgt^2\,f}\,\norm{\xi}+\tfrac12\,\Cphi^{-1/2}\,
    \norm{\wgt^2\,f} \,\norm{\wgt\,\nabla
      f}+\tfrac12\,\Cphi'\,\Cphi^{-1/2}\,\norm{\xi}\,
    \norm{\wgt\,\nabla
      f}\,.
  \end{equation}
  With elementary estimates, we deduce
  from~\eqref{DVDu}-\eqref{DVDubis} that
  \begin{equation}
    \label{Est:4}
    \big\|\wgt\,\nabla f\big\|\le 9\,\Cphi'\,\|\xi\|\quad
    \mbox{and}\quad \big\|\wgt^2f\big\|\leq 14\,\Cphi'\,\|\xi\|\,.
  \end{equation}

  Integrations by parts show that
  \begin{align*}
    \big\|D^2f\big\|^2
    &={\textstyle \sum_{i,j}}(\partial_{ij}f, \partial_{ij}
      f)={\textstyle \sum_{i,j}} \big(\partial_j f,
      -\,\partial_{iij} f+\partial_{ij}f\,\partial_i \phi\big)\\
    &={\textstyle \sum_{i,j}}\big(\partial_j f, \partial_j
      (-\,\partial_{ii} f)\big)+\tfrac12\,{\textstyle
      \sum_{i,j}}\(\partial_i( |\partial_j f|^2), \partial_i
      \phi\) =\big(\nabla f,\nabla(-\Delta
      f)\big)+\tfrac12\(|\nabla
      f|^2,|\nabla\phi|^2-\Delta\phi\).
  \end{align*}
  Using the elementary estimates
  \begin{align*}
    &\big(\nabla f,\nabla(-\Delta f)\big)=(\Delta_\phi f,\Delta
      f)=(f-\xi,\Delta f)\le(f,\Delta f)+\|\xi\|\,\|\Delta f\|\,,\\
    &(f,\Delta f)=(f,\Delta_\phi f)
      +(f,\nabla\phi\cdot\nabla f)=-\,\|\nabla f\|^2
      +\tfrac12\, (\nabla f^2,\nabla\phi)
      \le-\tfrac12\,(f^2,\Delta_\phi\phi)
      =\tfrac12\(|f|^2,|\nabla\phi|^2-\Delta\phi\),
  \end{align*}
  and using~\eqref{eq:cphi} and $\wgt\ge1$ and the fact that
  \begin{equation*}
    |\nabla\phi|^2-\Delta\phi\le|\nabla\phi|^2+\sqrt
    d\,|D^2\phi|\le|\nabla\phi|^2
    +\tfrac14\(|\nabla\phi|^2+C_\phi-1\)
    \le\tfrac14\(5\,\wgt^2+C_\phi-6\)\le\tfrac14\(C_\phi-1\)\wgt\,,
  \end{equation*}
  we obtain, using also~\eqref{Est:4}, the estimate
  \begin{multline*}
    \frac1d\,\|\Delta f\|^2\le\,\big\|D^2f\big\|^2\le\|\xi\|\,
    \|\Delta f\|+\tfrac12\(|f|^2+|\nabla f|^2,|\nabla\phi|^2-
    \Delta\phi\)\\
    \le\|\xi\|\,\|\Delta
    f\|+\tfrac18\(C_\phi-1\)\(\big\|\wgt^2f\big\|^2
    +\big\|\wgt\,\nabla
    f\big\|^2\)\le\|\xi\|\,\|\Delta f\|+C\,\|\xi\|^2
  \end{multline*}
  with $C=\tfrac18\,277\(C_\phi-1\){C_\phi'}^2$ because
  $277=9^2+14^2$. As a straightforward consequence, we obtain
  \begin{equation*}
    \|\Delta
    f\|\le\tfrac12\(d+\sqrt{d^2+4\,C}\)\|\xi\|
    \quad\mbox{and}\quad\big\|D^2f\big\|^2\le\tfrac
    Cd+\tfrac12\(d+\sqrt{d^2+4\,C}\)\|\xi\|^2.
  \end{equation*}
  With~\eqref{Est:4}, this completes the proof
  of~\eqref{eq:Phi1estimate}. A detailed computation of $\CB$ is
  given in Appendix~\ref{Sec:Dlambda}.
\end{proof}

\subsection{The Poincar\'e-Lions inequality (Proposition~\texorpdfstring{\ref{prop:poincares}}{Proposition5})}\label{Sec:PL}

We now focus on~\eqref{eq:poincarelions}. As a preliminary remark, note that this inequality is the counterpart in the whole space of the so-called {\em Lions lemma} in the smooth bounded domain case $\Omega \subset \R^d$, which amounts to the existence of some $c_\Omega >0$ such that
\begin{equation*}
\forall\,f\in L^2(\Omega)\,,\quad c_\Omega\,\|f - \left<f\right>\|^2_{L^2(\Omega)} \leq\|\nabla f\|^2_{H^{-1}(\Omega)} \le d\,\|f-\left<f\right>\|^2_{L^2(\Omega)}
\end{equation*}
(see for instance~\cite{MR0521262} and~\cite[Theorem 6.11.4]{Cia13}). This inequality belongs to the folklore in Hodge theory, see for instance~\cite{HS94} or~\cite{Joh00}, with variants involving the so-called {\em Witten-Laplacian on one-forms}.

\begin{proof}[Proof of the Poincar\'e-Lions inequality~\eqref{eq:poincarelions}]
First note that the right inequality directly follows from~\eqref{eq:toolboxH1adj} applied to $f-\seq{f}$. We focus on the left one. The spectral theorem implies for all $f \in D(\Lambda)$ with $\seq{f}=0$
\begin{equation}\label{eq:spectralthm}
(1+\CP)^{-1}\,\norm{f}^2 \leq \sep{(-\Delta_\phi)\,\Lambda^{-1} f, f}=\sep{\Lambda^{1/2}\,\nabla\Lambda^{-1} f, \Lambda^{-1/2}\,\nabla f}\le\|\Lambda^{1/2}\,\nabla\Lambda^{-1} f\|\,\|\Lambda^{-1/2}\,\nabla f\|
\end{equation}
because $1/(1+\CP)\le s/(s+1)$ for any $s\in[1/\CP,\infty)$. Let us prove that $\Lambda^{1/2}\,\nabla\Lambda^{-1}$ is a bounded operator. Using the commutator $[\Lambda,\nabla]=-\,D^2\phi\,\nabla$, we compute
\begin{multline*}
\Lambda^{1/2}\,\nabla\Lambda^{-1} =\Lambda^{-1/2}\,\Lambda \nabla\Lambda^{-1}= \Lambda^{-1/2}\,\nabla+\Lambda^{-1/2}\,[\Lambda, \nabla]\,\Lambda^{-1}\\
=\Lambda^{-1/2}\,\nabla-\Lambda^{-1/2}\,D^2 \phi\,\nabla\Lambda^{-1}=\Lambda^{-1/2}\,\nabla+\Lambda^{-1/2}\,\wgt\,\big(\wgt^{-1} D^2\phi\,\wgt^{-1}\big)\,\wgt\,\nabla\Lambda^{-1}\,.
\end{multline*}
{}From~\eqref{eq:toolboxH1adj}, we know that $\Lambda^{-1/2}\,\nabla$ and $\Lambda^{-1/2}\,\wgt$ are bounded respectively by $1$ and $\sqrt{\Cphi}$, from~\eqref{eq:toolbox} the operator $\wgt\,\nabla\Lambda^{-1}$ is bounded by $\sqrt{\CB}$, and we have
$$
\wgt^{-1} D^2\phi\,\wgt^{-1}\le\frac{\Cphi'}{4\,\sqrt{\Cphi}}
$$
as a consequence of~\eqref{eq:cphi}. Altogether, $\Lambda^{1/2}\,\nabla\Lambda^{-1}$ is bounded and
\begin{equation}\label{eq:estimatecpl1new}
\|\Lambda^{1/2}\,\nabla\Lambda^{-1}f\|\leq\(1+\tfrac14\,\Cphi'\,\sqrt{\CB}\) \norm f\,,
\end{equation}
which completes the proof with
\be\label{CPL}
\CPL=(1+\CP)^2\,\big(1+\tfrac14\,\Cphi'\,\sqrt{\CB}\big)^2.
\ee
\end{proof}

\section{Proof of the Korn inequalities of
  Theorems~\ref{theo:KPK} and~\ref{theo:PKPK} for general
  potentials}
\label{sec:general}

In this section, we assume that the potential
satisfies~\eqref{hyp:intnorm},~\eqref{hyp:regularity}
and~\eqref{hyp:poincarebasique}.

\subsection{Proof of Theorem~\ref{theo:KPK}}
\label{Sec:Prf1}

As a preliminary remark, we recall that
\begin{equation*}
  \label{eq:pdudau}
  \forall\,u \in H^1,\quad \fP(Du)=\seq{\nablaskew u}.
\end{equation*}
Indeed $Du=(Du- \seq{\nablaskew u})+\seq{\nablaskew u}$ is an
orthogonal decomposition because
\begin{equation*}
  (Du- \seq{\nablaskew u}, \seq{\nablaskew u})=
  (\nablaskew u- \seq{\nablaskew u}, \seq{\nablaskew u})
  +(\nablasym u, \seq{\nablaskew u})=
  \seq{\nablasym u}:\seq{\nablaskew u}=0
\end{equation*}
and the uniqueness of this decomposition shows the result.
\smallskip

\noindent$\rhd$ {\em Proof of~\eqref{eq:WKfull}}. Let us take
$u\in H^1$ such that $\seq{u}=0$ and $\seq{\nablaskew
  u}=0$. Using the Poincar\'e-Lions
inequality~\eqref{eq:poincarelions}, we have
\begin{equation}
  \label{eq:WKfull1}
  \|Du\|^2=\|\nablasym u\|^2+\|\nablaskew u\|^2 =
  \|\nablasym u\|^2+\sum_{i,j=1}^d\|(\nablaskew u)_{ij}\|^2
  \leq\|\nablasym u\|^2+\CPL \,
  \|\Lambda^{-1/2}\, \nabla (\nablaskew u)\|^2
\end{equation}
with
$\|\Lambda^{-1/2}\,\nabla (\nablaskew
u)\|^2=\sum_{i,j=1}^d\|\Lambda^{-1/2}\,\nabla (\nablaskew
u)_{ij}\|^2$. The Schwarz Theorem as stated in~\eqref{Schwarz}
gives
\begin{equation}
  \label{eq:WKfull2}
  \|\Lambda^{-1/2}\,\nabla (\nablaskew
  u)\|^2\leq 2 \sum_{i,j,k=1}^d \sep{\|\Lambda^{-1/2}\,\D_i
    (\nablasym u)_{jk}\|^2+\|\Lambda^{-1/2}\,\D_j (\nablasym
    u)_{ik}\|^2} =4 \sum_{j,k=1}^d\|\Lambda^{-1/2}\,\nabla
  (\nablasym u)_{jk}\|^2.
\end{equation}
The right-hand side of the Poincar\'e-Lions
inequality~\eqref{eq:poincarelions} yields
\begin{equation*}
  \sum_{j,k=1}^d\|\Lambda^{-1/2}\,\nabla (\nablasym u)_{jk}\|^2
  \leq \sum_{j,k=1}^d\|(\nablasym u)_{jk}\|^2=\|\nablasym u\|^2.
\end{equation*}
Together with~\eqref{eq:WKfull1} and~\eqref{eq:WKfull2}, this
gives $\|Du\|^2 \leq (1+4\,\CPL)\,\|\nablasym u\|^2$ so that we
can take $\CK \leq 1+4\,\CPL$. This proves~\eqref{eq:WKfull}
since $\nablasym \RR=\set{0}$.  \smallskip

\noindent
$\rhd$ {\em Proof of~\eqref{eq:WPKfull}}. Let us take $u\in H^1$
such that $\seq{u}=0$ and $\P(u)=0$. By definition of $\P$, we
have $ \norm{u}^2 \leq \norm{u- \fP(Du)\,x} $ since
$x \mapsto \fP(Du)\,x$ is in
$\RR$. Applying~\eqref{hyp:poincarebasique} and~\eqref{eq:WKfull}
gives
\begin{equation*}
  \norm{u}^2 \leq \CP \norm{Du- \fP(Du)}^2
  \leq \CP\,\CK\,\|\nablasym u\|^2
\end{equation*}
This proves~\eqref{eq:WPKfull} with $\CPK \leq \CP\,\CK$.
\smallskip

\noindent $\rhd$ {\em Proof of~\eqref{eq:WPKstrong}}. Let us take
$u\in H^1$ such that $\seq{u}=0$ and $\P(u)=0$. Applying the
strong Poincar\'e inequality~\eqref{eq:strongpoincare} and the
Korn inequality~\eqref{eq:WKfull} gives
\begin{equation}
  \label{eq:WPKstrong1}
  \norm{\wgt\,u}^2 \leq \CSP \norm{Du}^2=
  \CSP \sep{ \norm{Du-\fP(Du)}^2+\norm{\fP(Du)}^2}
  \leq \CSP\,\CK \norm{\nablasym u}^2+\CSP \norm{\fP(Du)}^2.
\end{equation}
An integration by parts, Jensen's inequality and the
Cauchy-Schwarz inequality show that
\begin{equation}\label{eq:fpbounded}
  \norm{\fP(Du)}^2=|\seq{\nablaskew u}|^2 =
  \tfrac14\,\sum_{i,j=1}^d\abs{ \int_{\R^d} \big(\D_j \phi\,u_i-
    \D_i \phi\,u_j\big)\,e^{-\phi} \dd x }^2
  \leq \norm{\nabla\phi}^2 \norm{u}^2.
\end{equation}
An integration by parts, $\Delta\phi\le\sqrt d\,|D^2\phi|$
and~\eqref{eq:cphi} provide us with
\begin{equation*}
  \int_{\R^d} |\nabla\phi|^2\,e^{-\phi} \dd x =
  \int_{\R^d} \Delta \phi\,e^{-\phi} \dd x \leq
  \tfrac14\int_{\R^d} |\nabla\phi|^2\,e^{-\phi} \dd x +
  \tfrac14\(\Cphi-1\)
\end{equation*}
so that $\norm{\nabla\phi}^2 \leq 3\,\Cphi$ and we conclude that
$\norm{\fP(Du)}^2 \leq 3\,\Cphi\,\norm{u}^2 \leq
3\,\Cphi\,\CPK\,\|\nablasym u\|^2$
by~\eqref{eq:WPKfull}. Inserting this estimate
in~\eqref{eq:WPKstrong1} completes the proof
of~\eqref{eq:WPKstrong} with
$\CSPK \leq \CSP(\CK+3\,\Cphi\,\CPK)$.\qed

\subsection{Proof of Theorem~\ref{theo:PKPK}}
\label{Sec:Prf2}~
\smallskip

\noindent$\rhd$ {\em Proof of~\eqref{eq:WPK}}. Since for any
$R\in\RR_\phi$, $D^sR=0$ and $\nabla\phi\cdot R=0$, we can
consider $u\in H^1$ such that $\P_\phi(u)=0$ without loss of
generality, so that $\P(u) \in \RR_\phi^c$. According
to~\eqref{eq:WPKfull} and by definition of the rigidity
constant~$\CRV$ in~\eqref{eq:rigidityvect}, we have
\begin{multline*}
  \norm{u}^2=\norm{u-\P(u)-\seq{u}}^2+\norm{\P(u)+\seq{u}}^2
  \leq \CPK\,\|\nablasym u\|^2 +
  \CRV \norm{\nabla\phi\cdot (\P(u)+\seq{u})}^2\\
  \leq \CPK\,\|\nablasym u\|^2+2\,\CRV \norm{\nabla\phi\cdot
    u}^2+2\,\CRV \norm{\nabla\phi\cdot (u-\P(u)-\seq{u})}^2.
\end{multline*}
Applying then the strong Poincar\'e-Korn
inequality~\eqref{eq:WPKstrong} gives
\begin{equation*}
    \norm{u}^2 \leq \CPK\,\|\nablasym u\|^2+2\,\CRV
    \norm{\nabla\phi\cdot u }^2+2\,\CRV\,\CSPK\,\|\nablasym
    u\|^2.
\end{equation*}
This completes the proof of~\eqref{eq:WPK} with
$\CPK' \leq \CPK+2\,\CRV\,\CSPK$.
\smallskip

\noindent $\rhd$ {\em Proof of~\eqref{eq:WK}}. Since for any
$R\in\RR_\phi$, $D^sR=0$ and $\nabla\phi\cdot R=0$, we can again
consider $u\in H^1$ such that $\fP_\phi(Du)=0$ without loss of
generality, so that $\fP(Du) \in \fM_\phi^c$. According
to~\eqref{eq:WKfull} and by definition of the rigidity
constant~$\CRD$ in~\eqref{eq:rigiditydiff}, we have
\begin{multline*}
  \norm{Du}^2=\norm{Du-\fP(Du)}^2+\norm{\fP(Du)}^2 \leq
  \CK\,\|\nablasym u\|^2+\CRD \norm{\nabla\phi\cdot
    (\fP(Du)\,x+\seq{u})}^2 \\
  \leq \CK\,\|\nablasym u\|^2+2\,\CRD \norm{\nabla\phi\cdot
    u}^2+2\,\CRD \norm{\nabla\phi\cdot (u-\fP(D
    u)\,x-\seq{u})}^2.
\end{multline*}
Applying the strong Poincar\'e
inequality~\eqref{eq:strongpoincare} gives
\begin{equation*}
  \norm{Du}^2 \leq \CK\,\|\nablasym u\|^2+2\,\CRD
  \norm{\nabla\phi\cdot u }^2+2\,\CRD\,\CSP\|Du- \fP(Du)\|^2,
\end{equation*}
and by the Korn inequality~\eqref{eq:WKfull} again,
\begin{equation*}
  \norm{Du}^2 \leq \CK\,\|\nablasym u\|^2+2\,\CRD
  \norm{\nabla\phi\cdot u }^2+2\,\CRD\,\CSP\,\CK\,\|\nablasym
  u\|^2,
\end{equation*}
This gives~\eqref{eq:WPK} with $\CK' \leq \CK( 1+2\,\CRD\,\CSP)$,
with $\CSP \leq \Cphi\,(1+\CP)$ according to
Proposition~\ref{prop:poincares}.\qed

\section{Operators on vector fields: proof of
  Theorem~\texorpdfstring{\ref{theo:ao}}{Theorem3}}
\label{sec:ao}

In this section we develop the functional analysis and the
spectral theory of operators on vector fields, and prove
Theorem~\ref{theo:ao}. All results on the tensorized operator
$-\Delta_\phi$ on vector fields are direct consequences of the
study of the corresponding scalar operator: from
Section~\ref{sec:wittenpoincare}, we learn that $-\Delta_\phi$ is
essentially self-adjoint and admits $\ccc_c^\infty(\R^d;\R^d)$ as
a core, the domain of its unique self-adjoint extension is
\begin{equation*}
  D(-\Delta_\phi)=\Big\{u \in L^2\,:\,\forall\,j \in \set{
    1,\cdots, d}, \;\|\wgt^2\,u_j\|^2+\big\|\wgt\,\nabla
  u_j\big\|^2+\big\|D^2 u_j\big\|^2<\infty \Big\}\,,
\end{equation*}
and its kernel is $\ker(-\Delta_\phi)=\R^d$.

Let us deal with the other operators of
Theorem~\ref{theo:ao}. Recall that the operator $-\Delta_S$ is
defined on $\ccc_c^\infty(\R^d;\R^d)$ vector fields by
$ -\Delta_S=-\,D^s_\phi\cdot D^s$. It is nonnegative and
$\Id -\Delta_S$ has therefore a Friedrichs extension with domain
included in $H^1_S$ defined as the completion of
$\ccc_c^\infty(\R^d;\R^d)$ with respect to the norm given by
$u\mapsto\|u\|^2+\|D^s u\|^2$. On the other hand, a maximal
self-adjoint extension of $\Id -\Delta_S$ can be built according
to the Lax-Milgram Theorem and its domain is included in
$H^1$. The Korn inequality~\eqref{eq:WKfull} implies that
$H^1_S=H^1$ so that the two extensions coincide, since there is a
unique extension for which the domain is contained in $H^1_S$
(\cite[Theorem~X.23]{RS75}), which is the case for the maximal
one. We have proven that $-\Delta_S$ is essentially
self-adjoint. From the Poincar\'e-Korn
inequality~\eqref{eq:WPKfull}, we learn that
$\ker (-\Delta_S)=\RR\oplus\R^d$ and that
$\inf\big(\Spec(-\Delta_S) \cap(0,+\infty)\big) \geq \CPK^{-1}
>0$. This concludes the proof of Theorem~\ref{theo:ao} for
$-\Delta_S$.

The same argument applies to
$-\Delta_{S\phi}=-\Delta_{S}-\nabla\phi \otimes \nabla\phi $
using~\eqref{eq:WKfull}--\eqref{eq:WPKstrong} as we know from
Proposition~\ref{prop:domain} that
$ \nabla\phi \otimes \nabla\phi\,u \in L^2$ for all $u \in
D$. This completes the proof of of Theorem~\ref{theo:ao}.
\begin{rem}
  Note also that the alternative operator defined on smooth
  vector fields by
  $u\mapsto-\,D^s_\phi\cdot D^s u-\nabla(\nablaphi\cdot u)$ has
  exactly the same properties as $-\Delta_S$ because
  $\|D^s u\|^2+\|\nabla\phi\cdot u\|^2 \sim\|D^s
  u\|^2+\|\nablaphi\cdot u\|^2$ where
  $\nablaphi u :=\nabla\cdot u- \nabla\phi\cdot u$ and
  $\nabla\cdot u=\mathrm{Tr}(D^su)$.
\end{rem}

\section{Zeroth order Korn inequalities: proof of
  Theorem~\ref{theo:KPK0}}
\label{sec:kornzero}

In order to prove~\eqref{eq:WKZfull}, we use a new
Poincar\'e-Lions-type inequality of order $-1$ and the Schwarz
Lemma.

\subsection{A Poincar\'e-Lions inequality of order
  \texorpdfstring{$-1$}{-1}}
\label{Sec:6.1}
\begin{lem}
  \label{LemPL-1}
  There exists two positive constants $\CLPL$ and $\CRPL$ such
  that, for all $f \in H^{-1}$, we have
  \begin{equation}
    \label{eq:poincarelions-1}
    \CLPL^{-1}\,\|\Lambda^{-1/2}(f-\seq{f})\|^2
    \leq\|\Lambda^{-1}\,\nabla f\|^2 \leq
    \CRPL\,\|\Lambda^{-1/2}\,(f-\seq{f})\|^2.
  \end{equation}
\end{lem}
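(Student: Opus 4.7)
The plan is to derive the lemma from Proposition~\ref{prop:poincares} (Poincar\'e-Lions) applied to the auxiliary function $F := \Lambda^{-1}(f - \seq f) \in H^1$, combined with commutator estimates between $\nabla$ and $\Lambda^{-1}$ based on the toolboxes of Propositions~\ref{prop:H1} and~\ref{prop:domain}. By density it suffices to treat $f \in L^2$ with $\seq f = 0$, the general case $f \in H^{-1}$ following by closure.

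First I would observe that $\seq F = (\Lambda^{-1}f, 1) = (f, \Lambda^{-1}\!\cdot\!1) = \seq f = 0$ since $\Lambda\cdot 1 = 1$, and exploit the self-adjointness of $\Lambda$ to derive the fundamental identity
\[
\|\Lambda^{-1/2}f\|^2 = (\Lambda^{-1}f, f) = (F, \Lambda F) = \|F\|^2 + \|\nabla F\|^2.
\]
Combined with the Poincar\'e inequality $\|F\|^2 \le \CP\|\nabla F\|^2$ applied to $F$, this yields the two-sided equivalence
\[
(1+\CP)^{-1}\,\|\Lambda^{-1/2}f\|^2 \;\le\; \|\nabla\Lambda^{-1}f\|^2 \;\le\; \|\Lambda^{-1/2}f\|^2,
\]
reducing the proof to a two-sided comparison between $\|\nabla\Lambda^{-1}f\|$ and $\|\Lambda^{-1}\nabla f\|$.

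Second I would use the commutator identity
\[
[\nabla,\Lambda^{-1}] = -\Lambda^{-1}\,[\nabla,\Lambda]\,\Lambda^{-1} = -\Lambda^{-1}\,D^2\phi\,\nabla\Lambda^{-1},
\]
so that $\Lambda^{-1}\nabla f = (\Id + \Lambda^{-1}D^2\phi)\,\nabla\Lambda^{-1}f$. The pointwise bound $|D^2\phi| \le \tfrac{\Cphi'}{4\sqrt{\Cphi}}\,\wgt^2$ from~\eqref{eq:cphi} together with the toolbox estimate~\eqref{eq:toolboxadj} ($\|\Lambda^{-1}\wgt^2 v\| \le \sqrt{\CB}\,\|v\|$) gives the operator bound $\|\Lambda^{-1}D^2\phi\,v\| \le M\|v\|$ with $M := \sqrt{\CB}\,\Cphi'/(4\sqrt{\Cphi})$. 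The right inequality then follows at once from $\|\Lambda^{-1}\nabla f\|^2 \le 2(1+M^2)\|\nabla\Lambda^{-1}f\|^2 \le 2(1+M^2)\|\Lambda^{-1/2}f\|^2$, giving $\CRPL \le 2(1+M^2)$.

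The main obstacle is the left inequality: the naive triangle inequality only provides $(1-M)\|\nabla\Lambda^{-1}f\| \le \|\Lambda^{-1}\nabla f\|$, which requires the smallness condition $M < 1$ that the hypotheses do not guarantee. To bypass this I would mimic the strategy used for the operator $\Lambda^{1/2}\nabla\Lambda^{-1}$ in the proof of Proposition~\ref{prop:poincares}: rather than comparing the two operators directly, combine Proposition~\ref{prop:poincares} applied to $F$ (which gives $\|F\|^2 \le \CPL\,\|\Lambda^{-1/2}\nabla F\|^2$) with the identity $\Lambda^{-1/2}\nabla F = \Lambda^{-3/2}\nabla f - \Lambda^{-3/2}D^2\phi\,\nabla\Lambda^{-1}f$ and the spectral relation $\|\nabla\Lambda^{-1}f\|^2 = \|\Lambda^{-1/2}f\|^2 - \|\Lambda^{-1}f\|^2$, together with $\|\Lambda^{-1}f\|^2 \le \tfrac{\CP}{1+\CP}\|\Lambda^{-1/2}f\|^2$ coming from the spectral gap of $-\Delta_\phi$ on $(\text{constants})^\perp$. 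A careful application of Young's inequality then allows one to absorb the commutator contribution and extract an explicit $\CLPL$ polynomial in $\CP$, $\Cphi$, $\Cphi'$, and $\CB$.
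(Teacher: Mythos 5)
Your reduction and your proof of the right inequality are fine: the identity $\|\Lambda^{-1/2}f\|^2=\|F\|^2+\|\nabla F\|^2$ for $F=\Lambda^{-1}f$ with $\seq F=0$, the comparison $(1+\CP)^{-1}\|\Lambda^{-1/2}f\|^2\le\|\nabla\Lambda^{-1}f\|^2\le\|\Lambda^{-1/2}f\|^2$, the commutator identity $\Lambda^{-1}\nabla f=(\Id+\Lambda^{-1}D^2\phi)\,\nabla\Lambda^{-1}f$ and the operator bound $\|\Lambda^{-1}D^2\phi\,v\|\le M\|v\|$ with $M=\sqrt{\CB}\,\Cphi'/(4\sqrt{\Cphi})$ are all correct, and this half is essentially the paper's own argument for $\CRPL$ (the paper bounds $\Lambda^{-1}\nabla\Lambda^{1/2}$ by the same commutator computation).

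The left inequality, however, has a genuine gap, and it is precisely the smallness issue you yourself flagged for the triangle-inequality route, resurfacing in your bypass. After applying~\eqref{eq:poincarelions} to $F$ and splitting $\Lambda^{-1/2}\nabla F=\Lambda^{-3/2}\nabla f-\Lambda^{-3/2}D^2\phi\,\nabla F$, the only control your toolbox gives on the commutator term is $\|\Lambda^{-3/2}D^2\phi\,\nabla F\|\le M\,\|\nabla F\|$ (any factorization putting the weights elsewhere produces $\|f\|$, which is too strong a norm for an $H^{-1}$ statement). You therefore obtain $\|F\|^2\le 2\,\CPL\,\|\Lambda^{-1}\nabla f\|^2+2\,\CPL\,M^2\,\|\nabla F\|^2$. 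This cannot be closed: $\|\nabla F\|^2$ is equivalent (up to the factor $1+\CP$) to the quantity $\|\Lambda^{-1/2}f\|^2$ you are trying to estimate, while the left-hand side $\|F\|^2$ is the smaller quantity, since $\|F\|^2\le\CP\,\|\nabla F\|^2$; and $M$ is large, not small (the paper's own estimate gives $\sqrt\CB\ge14\,\Cphi'$, so $M\gg1$), so $2\,\CPL\,M^2\ge\CP$ and the displayed inequality is already implied by the Poincar\'e inequality itself --- it carries no information, and no Young-type rearrangement can extract $\|\Lambda^{-1/2}f\|^2\lesssim\|\Lambda^{-1}\nabla f\|^2$ from it. The spectral relations you invoke ($\|\nabla F\|^2=\|\Lambda^{-1/2}f\|^2-\|\Lambda^{-1}f\|^2$ and $\|\Lambda^{-1}f\|^2\le\tfrac{\CP}{1+\CP}\|\Lambda^{-1/2}f\|^2$) only reprove the lower bound $(1+\CP)^{-1}\|\Lambda^{-1/2}f\|^2\le\|\nabla F\|^2$ and do not supply the missing upper control of $\|\nabla F\|$ by $\|\Lambda^{-1}\nabla f\|$.

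The missing ingredient is the one the paper uses: by the spectral theorem, $(1+\CP)^{-1}\|\Lambda^{-1/2}f\|^2\le\big(\Lambda\nabla\Lambda^{-2}f,\Lambda^{-1}\nabla f\big)$, so everything reduces to the boundedness on $L^2$ of $\Lambda\nabla\Lambda^{-3/2}$. Its commutator $D^2\phi\,\nabla\Lambda^{-3/2}$ has the weight $D^2\phi$ on the far left with no negative power of $\Lambda$ to absorb it, so the fixed constants $\Cphi$, $\Cphi'$, $\CB$ do not suffice; the paper invokes~\eqref{hyp:regularity} afresh with the small parameter $\eps=1/(2\sqrt\CB)$, writes $\eps\,\|\wgt^2\nabla\Lambda^{-3/2}g\|=\eps\,\|\wgt^2\Lambda^{-1}(\Lambda\nabla\Lambda^{-3/2}g)\|\le\tfrac12\,\|\Lambda\nabla\Lambda^{-3/2}g\|$, and absorbs this into the left-hand side on the core $\ccc_c^\infty(\R^d;\R)$, at the price of the new constant $\Cphi''=C_{1/(2\sqrt\CB)}$. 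This self-referential absorption, which exploits the infinitesimal smallness in~\eqref{hyp:regularity} rather than a fixed bound, is what replaces the unavailable condition $M<1$, and it is the step your proposal does not reproduce.
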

\begin{proof} We rely on the same strategy as for the proof of
  the Poincar\'e-Lions inequality~\eqref{eq:poincarelions}. For
  any $f \in H^{-1}$, the mean makes sense because
  $\seq{f}=\Lambda^{-1/2}\,\seq{f}=\seq{\Lambda^{-1/2}\,f}$ as
  $\Lambda=\Id$ when restricted on constants. We can therefore
  take $\seq{f}=0$ w.l.o.g.~and apply the spectral theorem as
  in~\eqref{eq:spectralthm}, for any $f \in D(\Lambda)$, to get
  \begin{equation}
    \label{eq:spectralthmbis}
    (1+\CP)^{-1}\,\|\Lambda^{-1/2}\,f\|^2 \leq
    \sep{(-\Delta_\phi)\,\Lambda^{-1}
      \Lambda^{-1/2}\,f,\Lambda^{-1/2}\,f}=
    \sep{\Lambda\nabla\Lambda^{-3/2}
      (\Lambda^{-1/2}\,f), \Lambda^{-1}\nabla f},
  \end{equation}
  where we used that $-\Delta_\phi=-\,\nablaphi\cdot \nabla$ and
  $\Lambda$ commute. In order to prove the left inequality
  in~\eqref{eq:poincarelions-1}, it is sufficient to prove that
  $\Lambda\nabla\Lambda^{-3/2}$ is a bounded operator. We work in
  $\ccc_c^\infty(\R^d;\R)$, which is a core for $\Lambda$, and
  the conclusion follows by density in $L^2$. Let us write
  \begin{equation*}
    \Lambda\nabla\Lambda^{-3/2}=\nabla\Lambda^{-1/2}+[\Lambda,\nabla]
    \, \Lambda^{-3/2}=\nabla\Lambda^{-1/2}-D^2\phi\,\nabla
    \Lambda^{-3/2} = \nabla\Lambda^{-1/2}-D^2\phi\,\Lambda^{-1}\,
    \big(\Lambda \nabla\Lambda^{-3/2}\big)\,.
  \end{equation*}
  By assumption~\eqref{hyp:regularity}, for all $\eps>0$ and for
  all $g \in \ccc^{\infty}(\R^d;\R)$, we know that
  \begin{align*}
    \|\Lambda \nabla\Lambda^{-3/2} g\|
    & \leq\|\nabla\Lambda^{-1/2}\,g\|
      +\eps\,\|\wgt^2\,\nabla\Lambda^{-3/2}g\|
      +C_\eps\,\|\nabla\Lambda^{-3/2}g\|\\
    & \leq\|\nabla\Lambda^{-1/2}\,g\|
      +\eps\,\big\|\wgt^2\,\Lambda^{-1}
      \big(\Lambda \nabla\Lambda^{-3/2}g\big)\big\|
      +C_\eps\,\|\nabla\Lambda^{-1/2}\,(\Lambda^{-1}g)\|\,.
  \end{align*}
  The operators $\nabla\Lambda^{-1/2}$ and $\wgt^2\,\Lambda^{-1}$
  are bounded respectively by $1$ and $\sqrt{\CB}$ according
  to~\eqref{eq:toolboxH1} and~\eqref{eq:toolbox}, and
  $\Lambda^{-1}\le1$, so that
  \begin{equation*}
    \|\Lambda \nabla\Lambda^{-3/2} g\|
    \leq (1+C_\eps)\,\|g\|+\eps\, \sqrt{\CB}\,
    \|\Lambda \nabla\Lambda^{-3/2}g\|\,.
  \end{equation*}
  With the choice $\eps=1/(2\,\sqrt{\CB})$ and
  $\Cphi'':=C_\eps=C_{1/(2\,\sqrt{\CB})}$
  in~\eqref{hyp:regularity}, we obtain
  \begin{equation*}
    \|\Lambda \nabla\Lambda^{-3/2} g\|\leq 2\,(1+\Cphi'')\,\|g\|\,.
  \end{equation*}
  Coming back to~\eqref{eq:spectralthmbis} with
  $g=\Lambda^{-1/2}\,f$, we obtain
  \begin{equation*}
    \|\Lambda^{-1/2}\,f\|\leq 2\,(1+\CP)\,(1+\Cphi'')\,
    \|\Lambda^{-1}\,\nabla f\|\,,
  \end{equation*}
  so that $\CLPL \leq 4\,(1+\CP)^2\,(1+\Cphi'')^2$ and the left
  inequality is proven.

  In order to prove the right inequality
  in~\eqref{eq:poincarelions-1}, we notice that
  $\Lambda^{-1}\,\nabla f
  =\Lambda^{-1}\,\nabla\Lambda^{1/2}\,(\Lambda^{-1/2}\,f)$ and it
  is therefore sufficient to prove that
  $\Lambda^{-1}\nabla\Lambda^{1/2}$ is a bounded operator. This
  is done as in~\eqref{eq:estimatecpl1new} by writing
  \begin{equation*}
    \Lambda^{-1}\nabla\Lambda^{1/2}=
    \Lambda^{-1}\nabla\Lambda\,\Lambda^{-1/2}=
    \nabla\Lambda^{-1/2}+\Lambda^{-1}
    [\nabla,\Lambda]\,\Lambda^{-1/2}=
    \nabla\Lambda^{-1/2}+\Lambda^{-1}\,D^2\phi\,
    \big(\nabla\,\Lambda^{-1/2}\big)\,.
  \end{equation*}
  As in the proof of~\eqref{eq:estimatecpl1new}, we obtain for
  any $f\in\ccc_c^\infty(\R^d;\R)$ and $g=\Lambda^{-1/2}\,f$ the
  estimate
  \begin{equation*}
    \|\Lambda^{-1}\nabla f\|=\|\Lambda^{-1}\nabla\Lambda^{1/2}g\|
    \leq\(1+\tfrac14\,\Cphi'\,{\textstyle\sqrt{\CB/C_\phi}}\,\)
    \|\nabla\,\Lambda^{-1/2}g\|\leq
    \(1+\tfrac14\,\Cphi'\,{\textstyle\sqrt{\CB/C_\phi}}\,\)
    \|g\|\,,
  \end{equation*}
  using~\eqref{eq:cphi},~\eqref{eq:toolboxH1}
  and~\eqref{eq:toolboxadj}. This concludes the proof with
  $\CRPL=\big(1+\tfrac14\,\Cphi'\,\sqrt{\CB/C_\phi}\,\big)^2$.
\end{proof}

\subsection{Proof of the Korn inequalities in Theorem~\ref{theo:KPK0}}

As a consequence of~\eqref{eq:fpbounded} the projection $u \mapsto \fP(Du)=\seq{\nablaskew u}$ has a unique extension as a bounded operator on $L^2$ with norm bounded by $\|\nabla{\phi}\|$ since~$H^1$ is dense in $L^2$. We keep the same name for the
extension and notice that
\begin{equation*}
\Lambda^{-1/2}\,\fP(Du)=\Lambda^{-1/2}\,\seq{\nablaskew u}=\langle \Lambda^{-1/2}\,\nablaskew u \rangle=
\fP( \Lambda^{-1/2}\,Du)\,.
\end{equation*}

\noindent$\rhd$ {\em Proof of~\eqref{eq:WKZfull}}. Let us take $u\in L^2$ such that $\seq{u}=0$ and $\fP(Du)=\seq{\nablaskew u}=0$. Using~\eqref{eq:poincarelions-1}, we have
\begin{equation*}\label{eq:WKZfull1}
\|\Lambda^{-1/2}\,Du\|^2=\|\Lambda^{-1/2}\,\nablasym u\|^2+\|\Lambda^{-1/2}\,\nablaskew u\|^2\leq\|\Lambda^{-1/2}\,\nablasym u\|^2+\CLPL\,\|\Lambda^{-1}\,\nabla(\nablaskew u)\|^2
\end{equation*}
where $\|\Lambda^{-1}\,\nabla(\nablaskew u)\|^2=\sum_{i,j=1}^d\|\Lambda^{-1}\,\nabla (\nablaskew u)_{ij}\|^2$. By the Schwarz Theorem~\eqref{Schwarz},
\begin{equation*}\label{eq:WKZfull3}
\|\Lambda^{-1}\,\nabla(\nablaskew u)\|^2\leq 2 \sum_{i,j,k=1}^d\Big(\|\Lambda^{-1} \D_i (\nablasym u)_{jk}\|^2+\|\Lambda^{-1} \D_j (\nablasym u)_{ik}\|^2\Big)=4 \sum_{j,k=1}^d\|\Lambda^{-1}\,\nabla (\nablasym u)_{jk}\|^2,
\end{equation*}
and~\eqref{eq:poincarelions-1} yields
\begin{equation*}
  \sum_{i,j=1}^d\|\Lambda^{-1}\,\nabla (\nablasym u)_{ij}\|^2
  \leq \CRPL\sum_{j,k=1}^d\|\Lambda^{-1/2}\,(\nablasym
  u)_{jk}\|^2 =\CRPL\,\|\Lambda^{-1/2}\,\nablasym u\|^2.
\end{equation*}
Altogether, this proves
\begin{equation*}
  \label{eq:WKZfullbis}
  \|\Lambda^{-1/2}\,\big(Du-\fP(Du)\big)\|^2 \leq (1+4\,\CLPL\, 
  \CRPL )\,\|\Lambda^{-1/2}\,\nablasym u\|^2
\end{equation*}
and~\eqref{eq:WKZfull} follows with $\CKZ=1+4\,\CLPL\,\CRPL$.
\smallskip

\noindent$\rhd$ {\em Proof of~\eqref{eq:WPKZfull}}. Let us take
$u\in L^2$ such that $\seq{u}=0$ and $\P(u)=0$. By definition of
$\P$,~\eqref{eq:poincarelions} and~\eqref{eq:WKZfull} we get
\begin{equation*}
  \norm{u}^2 \leq \norm{u- \fP(Du)\,x} \leq
  \CPL\,\norm{\Lambda^{-1/2}\,(Du- \fP(Du))}^2 \leq
  \CPL\,(1+4\,\CLPL\,\CRPL)\,\|\Lambda^{-1/2}\,\nablasym u\|^2.
\end{equation*}
This proves~\eqref{eq:WPKZfull} with
$\CPKZ \leq \CPL\,(1+4\,\CLPL\,\CRPL)$.
\smallskip

\noindent$\rhd$ {\em Proof of~\eqref{eq:WPKzero}}. Let us
consider $u\in L^2$ such that $\P_\phi(u)=0$ so that
$\P(u) \in \RR_\phi^c$. By~\eqref{eq:WPKZfull} and by
definition~\eqref{eq:rigidityzero}, we have
\begin{equation*}
\begin{split}
  \norm{u}^2
  &=\norm{u-\P(u)-\seq{u}}^2+\norm{\P(u)+\seq{u}}^2\\
  &\leq \CPKZ\,\|\Lambda^{-1/2}\,\nablasym
  u\|^2+\CRVZ\,
  \|\Lambda^{-1/2}\,\nabla\phi\cdot (\P(u)+\seq{u})\|^2\\
  &\leq \CPKZ\,\|\Lambda^{-1/2}\,\nablasym
  u\|^2+2\,\CRVZ\,\|\Lambda^{-1/2}\,(\nabla\phi\cdot
  u)\|^2+2\,\CRVZ\,\|\Lambda^{-1/2}\,[\nabla\phi\cdot
  (u-\P(u)-\seq{u})]\|^2.
\end{split}
\end{equation*}
Inequalities~\eqref{eq:toolboxH1adj}, $\Lambda^{-1/2}\le1$
and~\eqref{eq:WPKZfull} yield
\begin{equation*}
\begin{split}
  \norm{u}^2 & \leq \CPKZ\,\|\Lambda^{-1/2} \nablasym
  u\|^2+2\,\CRVZ\,\|\Lambda^{-1/2} (\nabla\phi\cdot
  u)\|^2+2\,\CRVZ\,\Cphi\,\|u-\P(u)-\seq{u}\|^2\\
  & \leq \CPKZ\|\Lambda^{-1/2} \nablasym u\|^2+2\,\CRVZ
  \norm{\Lambda^{-1/2} (\nabla\phi\cdot
    u)}^2+2\,\CRVZ\,\Cphi\,\CPKZ\,\|\Lambda^{-1/2} \nablasym u\|^2.
\end{split}
\end{equation*}
This proves~\eqref{eq:WPKzero} with
$\CPKZ' \leq \CPKZ(1+2\,\CRVZ\,\Cphi)$, and also completes the
proof of Theorem~\ref{theo:KPK0}.\qed

\appendix\section{Extensions, geometric observations and
  motivation from kinetic theory}
\label{Appendix:A}

\subsection{On the assumptions and some generalizations}
\label{Sec:Remarks}

\begin{rem}
  \label{rem:concentration}
  The fact that the Poincar\'e
  inequality~\eqref{hyp:poincarebasique} implies that
  $e^{-\phi} \dd x$ has an average $\seq{x}$ and a variance
  $\langle |x|^2 \rangle$ is classical (see, \eg,~\cite[Corollary
  3.2]{Led01}). Indeed~\eqref{hyp:poincarebasique} yields a
  concentration property of $e^{-\phi(x)} \dd x$ via a {\em
    concentration function}. A direct application of Fatou's
  Lemma allows to extend~\eqref{hyp:poincarebasique} to the set
  $W^{1,\infty}$ of uniformly Lipschitz functions, which includes
  $x\mapsto x_j$ for all $j\in \set{ 1,\cdots , d}$. This
  directly gives
  $\int_{\R^d} |x|^2\,e^{-\phi(x)} \dd x \leq d \,\CP$ and the
  integrability of $x$ w.r.t.~$e^{-\phi(x)} \dd x$ follows by the
  Cauchy-Schwarz inequality. By induction, we get
  under~\eqref{hyp:poincarebasique} alone that the set of all
  polynomial functions $\R[x]$ is included in $L^2$ and even
  $H^1$.
\end{rem}

\begin{rem}
  \label{rem:integrability}
  As a consequence of Remark~\ref{rem:concentration} and of the
  strong Poincar\'e inequality~\eqref{eq:strongpoincare}, we
  directly get that for all $i,j \in \set{1,\cdots d}$,
  $\int_{\R^d} |x_i\,\D_j \phi(x)|\,e^{-\phi(x)} \dd x
  <\infty$. It is indeed sufficient to apply the strong
  Poincar\'e inequality to $x \mapsto x_j$, which is in
  $H^1$. Note that this gives sense to all quantities of
  Theorem~\ref{theo:KPK}, \eg, $\norm{\nabla\phi\cdot R}$ for any
  infinitesimal rotation $R$.
\end{rem}

\begin{rem}
  \label{rem:poincare}
  There are many sufficient conditions for the Poincar\'e
  inequality of
  Assumption~\eqref{hyp:poincarebasique}. When~$\phi$ is
  uniformly convex, it is shown in~\cite{Bakry-Emery} that
  $C_{\mathrm P}$ is greater or equal than the convexity constant,
  hence leading to fully explicit estimates in the two main
  theorems. If we only assume that
  $\lim_{|x| \mapsto\infty } |\nabla\phi(x)|=+\infty$,
  then~$\Lambda$ is in fact an operator with compact resolvent,
  hence with discrete spectrum, and~\eqref{hyp:poincarebasique}
  follows. Another, less stringent, sufficient condition on
  $\phi$ is
  $\liminf_{|x| \to \infty} \big(\frac12
  |\nabla\phi(x)|^2-\Delta\phi(x)\big) >c$ for some $c>0$ (it
  implies the Poincar\'e inequality from the Persson-Agmon formula
  of~\cite{Per60} or~\cite[Theorem 3.2]{Agm82}). We note that this
  last assumption is satisfied by any regular function which
  coincides with $x \mapsto \alpha\,|x|+\beta$ outside of a large
  centred ball, where $\alpha$ and $\beta$ are normalization
  constants.
\end{rem}

\begin{rem}\label{rem:boundedcase}
  Assumptions~\eqref{hyp:intnorm}--\eqref{hyp:regularity}--\eqref{hyp:poincarebasique}
  may be satisfied in other geometries than the one of the whole
  Euclidean space $\R^d$. In particular, given an open, smooth,
  bounded and connected subset $\Omega$ of $\R^d$, we observe
  that these hypotheses are satisfied by the potential
  $\phi(x)=\exp\big(1/d^2(x,\partial\Omega)\big)$, where
  $d(x,\partial\Omega)$ denotes the usual Euclidean distance from
  $x$ to $\partial\Omega$. Here $\R^d$ is replaced by $\Omega$
  equipped with the measure $e^{-\phi(x)} \dd x$. It is an open
  question to understand how our results could be extended to
  usual boundary problems with potentials mimicking walls at the
  boundary of $\Omega$.
\end{rem}

\subsection{Rigidity constants and defects of axisymmetry}
\label{subsec:rigidity}

In the proofs of Theorems~\ref{theo:KPK} and~\ref{theo:KPK0} (see
also Appendix~\ref{Sec:Constants}), we used the two rigidity
constants $\CRV$ and $\CRVZ$ defined by~\eqref{eq:rigidityvect}
and~\eqref{eq:rigidityzero}, depending on the level of regularity
in each case, to measure the defects of axisymmetry (note that
$\CRV \leq \CRVZ $ since $\Lambda \geq \Id$). We used also $\CRD$
defined in~\eqref{eq:rigiditydiff} but remark that $\CRV$ and
$\CRD$ are not directly comparable either, because
$\fM_\phi^c \neq D \RR_\phi^c$. Other ways of measuring the
default of axisymmetry of the potential $\phi$ can be considered.
\smallskip

\noindent
\circled1 One can consider, again, a {\em rigidity of vector
  fields} constant, but this time defined alternatively by
\begin{equation*}
  \CRVL^{-1}=\min_{ A\,x \in \RR_\phi^c \setminus \set{0}}
  \frac{\norm{\nabla\phi\cdot A\,x}^2}{\norm{A\,x}^2}\quad
  \mbox{when}\quad \RR_\phi^c \neq \set{0}\quad\mbox{and}
  \quad\CRVL=0\quad\mbox{otherwise}\,.
\end{equation*}
This leads to the {\em modified Poincar\'e-Korn inequality}
\begin{equation}
  \label{eq:WPKL}
  \inf_{A\,x\in\RR_\phi}\|u- \seq{u}-A\,x\|^2=\|u- \seq{u}-
  \P_\phi (u)\|^2 \leq \CPKL'\,\|D^s u\|^2
  +2\,\CRVL\,\|\nabla \phi\cdot (u-\seq{u})\|^2
\end{equation}
with an explicit bound for the constant $\CPKL'$
using~\eqref{eq:WPKfull} and the method of proof
of~\eqref{eq:WPK}. Once more, the existence of $\CRVL$ follows
from the injectivity of $A\,x \mapsto \nabla\phi\cdot A\,x$ on
$\RR_\phi^c$ and the fact that $\RR_\phi^c$ is of finite
dimension. The main advantage of this approach is to preserve a
continuity property with respect to axisymmetry, which can be
stated as follows: a small perturbation of a radial potential
$\phi$ gives rise to a small constant $\CRVL$, the limiting case
being $\RR_\phi^c=\set{0}$ and $\CRVL=0$. The main drawback is
that the symmetric operator associated to~\eqref{eq:WPKL} is
neither local nor differential because of the term $\seq{u}$
which appears in the right-hand side of~\eqref{eq:WPKL}.
\smallskip

\noindent
\circled2 In a bounded domain $\Omega\subset\R^d$, with flat
metric ({\em i.e.}, for a constant potential $\phi$) considered
in~\cite{DV02}, the authors use {\em Grad's number}. Let us
explain how to adapt this method in our context under, \eg, the
additional condition
\begin{equation*}
  \lim_{|x| \rightarrow
    \infty}\frac{D^2\phi(x)}{\lfloor\nabla\phi(x)\rceil^2}=0\,.
\end{equation*}
This property implies that the multiplication operator by
$D^2 \phi$ is relatively compact with respect to $-\Delta_\phi$
acting on vector fields, with essential spectrum in
$[\CP, \infty)$. The spectrum in $[0,\CP)$ is then a pure point
spectrum and the kernel is finite dimensional. For any
antisymmetric matrix $\antiSigma$, there exists an affine space
$\VA$ of functions $v\in H^1$ solving {\em the Witten-Hodge
  problem}
\begin{equation*}\label{eq:uA}
  \Divphi v=0\,,\quad \nablaskew v=\antiSigma\,.
\end{equation*}
The {\em Witten-Hodge inequality} asserts that
\begin{equation}\label{eq:IntroWHineq}
  \inf_{v \in \VA}\|D^s v\|^2 \le c_{\mathrm H}\,|\antiSigma|^2
\end{equation}
for some constant $c_{\mathrm H}\in (0,\infty)$. The reverse
inequality amounts to the existence of {\em Grad's number} such
that
\begin{equation*}\label{gradnumber}
C_{\mathrm G}^{-1} :=\inf_{\antiSigma\in \fM_\phi^c,\,|\antiSigma|=1,\,v \in \VA}\;\|\nablasym v\|^2.
\end{equation*}
The existence of $C_{\mathrm G}$ as well as a quantitative
positive lower bound could be establish using mass transport
theory exactly as in~\cite{DV02}. Of course, $C_{\mathrm G}$ is
well defined only when $\RR_\phi^c\neq\{0\}$, \ie, under the
condition that $\phi$ is not radially
symmetric. Inequality~\eqref{eq:IntroWHineq} is natural in
differential geometry and more specifically in De Rham cohomology
theory: we refer to~\cite{AS00,HN05} for further developments on
this topic. In bounded domains, how to measure the symmetry
defect by Grad's number in view of Korn type inequalities is at
the core of~\cite{DV02} but has also been studied
in~\cite{MR2542573}. This approach differs from ours. Using
$C_{\mathrm G}$ and~\eqref{eq:WKfull}, the
inequality~\eqref{eq:WPK} can be proved along a similar 
strategy as in~\cite{DV02}, although with different constants.

\subsection{An elementary application in kinetic theory}
\label{subsec:toymodel}

The main motivation for this paper comes from {\em kinetic
  equations involving a confining potential} studied
in~\cite{CDHMMShypo}. Also see~\cite[Section~2]{DV02}
and~\cite{MR3815207,Duan_2011} for applications of Korn
inequalities to kinetic equations. As an example, let us consider
the linear relaxation model of {\em BGK}-type
\begin{equation}\label{eq:toykinintro}
  \partial_tf+v\cdot\nabla_x f -\nabla_x \phi\cdot \nabla_v f
  =\mathcal Lf :=G_f-f,
\end{equation}
where $f(t,x,v)$ is an unknown distribution function for a system
of particles depending on time $t \ge 0$, position $x\in\R^d$ and
velocity $v\in\R^d$, and where $G_f$ is defined by
\begin{equation*}
  G_f:=(\rho+\bu\cdot v)\,\mu\quad\mbox{where}\quad \rho(t,x)
  :=\int_{\R^d} f(t,x,v)\,\dd v\,,\quad \bu(t,x)
  :=\int_{\R^d}v\,f(t,x,v)\,\dd v\,.
\end{equation*}
Here $\mu(v):=(2\,\pi)^{-d/2}\,e^{-|v|^2/2}$ while $\rho $ and
$\bu$ are respectively the macroscopic density and the average
velocity associated with $f$. The collision kernel admits $d+1$
conserved moments, in the sense that
$\int_{\R^d}\mathcal Lf(t,x,v)\,\dd v=0=\int_{\R^d}v_i\,\mathcal
Lf(t,x,v)\,\dd v$ for any $i=1,\ldots,d$ and $f \in
L^1((1+|v|){\rm d} v)$.

A natural question is to look for equilibria
of~\eqref{eq:toykinintro}. A quick glance at the equation shows
that $\MM(x,v) :=e^{-\phi(x)}\,\mu(v)$ is one of them. Korn
inequalities provide us with a complete answer.
\begin{proposition}
  Under
  Assumptions~\eqref{hyp:intnorm}--\eqref{hyp:regularity}--\eqref{hyp:poincarebasique},
  all equilibria of~\eqref{eq:toykinintro} in
  $L^2(\mmm^{-1}\dd x\dd v)$ take the form
  $f(x,v)=\big(( R(x)\cdot v)+c\big)\,\MM$ for some
  $R \in \RR_\phi$ and $c \in \R$.
\end{proposition}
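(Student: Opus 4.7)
The plan is to decouple the stationary equation into a purely collisional part and a pure transport part, then read off constraints on the hydrodynamic fields $\rho$ and $\bu$ by expanding the transport equation as a polynomial in $v$, and finally invoke the rigidity of the operator $D^s$.

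\textbf{Decoupling.} Since $\mathcal{T}\MM=0$ for $\mathcal{T}:=v\cdot\nabla_x-\nabla_x\phi\cdot\nabla_v$, an integration by parts shows that $\mathcal{T}$ is antisymmetric on $L^2(\MM^{-1}\dd x\dd v)$. On each fibre $\{x\}\times\R^d_v$, $G$ is the $L^2(\mu^{-1}\dd v)$ orthogonal projection onto $\mathrm{Span}(\mu,v_1\mu,\ldots,v_d\mu)$, so $\mathcal{L}=G-\mathrm{Id}$ is symmetric nonpositive on $L^2(\MM^{-1})$ with $(\mathcal{L}f,f)=-\int\|f-G_f\|_{L^2(\mu^{-1}\dd v)}^2\,e^\phi\,\dd x$. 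Testing the stationary equation $\mathcal{T}f=\mathcal{L}f$ against $f$ yields $0=(\mathcal{T}f,f)=(\mathcal{L}f,f)\le 0$, which forces $\mathcal{L}f=0$ and then $\mathcal{T}f=0$ separately.

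\textbf{Reading off the constraints.} From $\mathcal{L}f=0$ one obtains $f=(\rho(x)+\bu(x)\cdot v)\mu(v)$ with $\rho,\bu\in L^2(e^{\phi}\dd x)$. Inserting this into $\mathcal{T}f=0$, using $\nabla_v\mu=-v\mu$, dividing by $\mu$, and separating by degree in $v$ yields
\begin{equation*}
  \nabla\phi\cdot\bu=0,\qquad \nabla\rho+\rho\,\nabla\phi=0,\qquad D^s\bu+\tfrac12\bigl(\bu\otimes\nabla\phi+\nabla\phi\otimes\bu\bigr)=0,
\end{equation*}
from the degrees $0$, $1$, and the symmetric part of degree $2$. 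The degree-one equation integrates to $\rho=c\,e^{-\phi}$ for some $c\in\R$. Setting $\bu=w\,e^{-\phi}$ and computing $D^s\bu=e^{-\phi}\bigl(D^sw-\tfrac12(w\otimes\nabla\phi+\nabla\phi\otimes w)\bigr)$ collapses the remaining constraints cleanly to
\begin{equation*}
  D^sw=0\quad\text{and}\quad\nabla\phi\cdot w=0,
\end{equation*}
with $w\in L^2$ in the paper's weighted sense.

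\textbf{Rigidity.} The distributional identity $D^sw=0$ combined with the Schwarz relation~\eqref{Schwarz} gives $\partial_k(D^aw)_{ij}=0$ for all $i,j,k$, so $Dw$ is a constant antisymmetric matrix $A$ by connectedness of $\R^d$, whence $w(x)=Ax+b$ for some $b\in\R^d$. To eliminate $b$, test the remaining constraint $\nabla\phi\cdot(Ax+b)=0$ against $(x\cdot y)$ in the measure $e^{-\phi}\dd x$ for arbitrary $y\in\R^d$: an integration by parts using $\langle x\rangle=0$, $\int e^{-\phi}\dd x=1$, and $\mathrm{tr}(A)=0$ kills the $A$-contribution and leaves $y\cdot b=0$, hence $b=0$. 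Consequently $R(x):=Ax$ lies in $\RR_\phi$ and $f=(c+R(x)\cdot v)\MM$, as claimed.

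\textbf{Main obstacle.} The main technical care is in justifying the integration-by-parts underlying the antisymmetry of $\mathcal{T}$ and the separation-by-degree in $v$ under the sole regularity $f\in L^2(\MM^{-1})$; both rely on a standard but non-trivial approximation by smooth compactly supported data to handle the potentially fast growth of $\nabla\phi$ at infinity. Alternatively one could invoke Theorem~\ref{theo:ao} to identify $w$ with an element of $\ker(-\Delta_{S\phi})=\RR_\phi$ directly, provided $w$ is first shown to lie in the form domain—which is essentially the same regularity issue.
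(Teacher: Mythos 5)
Your proof is correct, and while the first two steps coincide in substance with the paper's argument, your final step takes a genuinely different route. The decoupling (antisymmetry of the transport operator plus nonpositivity of $\mathcal L$ forcing $\mathcal Lf=0$ and $\mathcal Tf=0$ separately) and the extraction of the macroscopic constraints are the same as in the paper, up to presentation: you separate by polynomial degree in $v$, which hands you $\nabla\phi\cdot\bu=0$ directly at degree zero, whereas the paper tests against $1$, $v_i$, $v_iv_j$ and must combine its divergence identity with the trace of $D^su=0$ and an extra integration by parts (showing $\seq u=0$) to reach the same conclusion --- a small but real simplification on your side. The genuine divergence is in the rigidity step: the paper deduces $u\in\RR$ from $D^su=0$ by invoking the zeroth order Poincar\'e--Korn inequality~\eqref{eq:WPKZfull} of Theorem~\ref{theo:KPK0}, which is precisely tailored to the $L^2$-only regularity available here (indeed, illustrating that theorem is the stated purpose of this appendix); you instead use the classical distributional rigidity argument, namely~\eqref{Schwarz} applied in $\mathcal D'$ to conclude that $w$ is affine with antisymmetric linear part, which is legitimate since $w=\bu\,e^{\phi}\in L^2(e^{-\phi}\dd x)\subset L^1_{\mathrm{loc}}$, and you then kill the constant vector $b$ by the same integration by parts that the paper itself performs below~\eqref{eq:rigiditydiff} to prove injectivity of $A\,x+b\mapsto\nabla\phi\cdot(A\,x+b)$ (justified by the moment bounds~\eqref{Concentration} and Remark~\ref{rem:integrability}). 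Your route is more elementary and self-contained --- it does not need any of the Korn machinery --- at the price of losing the ``application of the main theorems'' character the authors intended; conversely, the paper's route transfers verbatim to situations where one only knows $\Lambda^{-1/2}D^su$ is small rather than $D^su=0$, which is what the quantitative inequality buys. The technical caveat you flag (justifying the energy identity and the $v$-moment manipulations for $f$ merely in $L^2(\MM^{-1}\dd x\dd v)$) is shared by the paper's own computation, so it is not a gap relative to it.
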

\begin{proof} 
  Write $f=h\,\mmm$ with $h \in L^2(\mmm \dd x\dd v)$,
  $\rho=r\,e^{-\phi}$, $\bu=u\,e^{-\phi}$, so that
  equation~\eqref{eq:toykinintro} reads
  \begin{equation}
    \label{eq:toykinintrobis}
    \partial_th+v\cdot\nabla_x h -\nabla_x \phi\cdot \nabla_v h =
    L(h) :=h-r-u\cdot v.
  \end{equation}
  The restriction of $L$ to $L^2(\mu \dd v)$ is $L=-\,\Pi^\bot$
  where $\Pi$ is the orthogonal projection onto
  $\mathrm{Span} \{1, v_1, \ldots, v_d\}$. We compute
  \begin{equation*}
    \frac d{dt}\iint_{\R^d\times\R^d}|h|^2\,\MM \dd x\dd v = 
    2\iint_{\R^d\times\R^d}(L h)\,h\,\MM \dd x\dd v=
    -\,2 \iint_{\R^d \times \R^d} \left| \Pi^\bot h \right|^2\, 
    \MM \dd x \dd v
  \end{equation*}
  and deduce that any stationary solution
  of~\eqref{eq:toykinintrobis} takes the form
  $h(x,v)=r(x)+u(x)\cdot v $. Equation~\eqref{eq:toykinintrobis}
  then reads
  \begin{equation*}
    v\cdot\nabla_x(r+u\cdot v)=\nabla_x\phi\cdot u.
  \end{equation*}
  Integrating the latter equation against respectively $1$, $v_i$
  and $v_i\,v_j$ with $i \not=j$ in $L^2(\mu \dd v)$ yields
  \begin{equation*}
    i)\quad\nabla_x\cdot u-\nabla_x \phi\cdot u=0,\quad 
    ii)\quad D^s u=0,\quad iii)\quad\nabla r=0.
  \end{equation*}
  From iii) we get that there exists $c \in \R$ such that
  $r=c$. As for i), an integration by parts gives
  \begin{equation*}
    0=\int_{\R^d} (\nabla_x\cdot u-\nabla_x \phi\cdot u)
    \seq{u}\cdot x\,e^{-\phi(x)} \dd x=
    -\,\int_{\R^d} u\cdot \nabla
    \sep{ \seq{u}\cdot x}\,e^{-\phi(x)} \dd x=
    -\,\int_{\R^d} u\cdot \seq{u}\,e^{-\phi(x)} \dd x=-\,\seq{u}^2,
  \end{equation*}
  so that $\seq{u}=0$. Note also that taking the trace in ii)
  yields $\nabla_x\cdot u=0$ so that i) reads
  $\nabla_x \phi\cdot u=0$. Using this and~\eqref{eq:WPKZfull} in
  Theorem~\ref{theo:KPK0} shows that $u=R$ with
  $R=\P_\phi(u)\in\RR_\phi$. Hence $h=R(x)\cdot v+c$ and
  $f(x,v)=\big(( R(x)\cdot v)+c\big)\,\MM$. The reciprocal is
  straightforward, which completes the proof.
\end{proof}

\section{Additional details on computations}
\label{Appendix:B}

\subsection{Functions, derivatives and projections}
\label{Appendix:B2}

We denote by $f$ a generic scalar function on
$\R^d$ and by $u:\R^d\to\R^d$ a generic vector field, so that
\hbox{$\nabla f=(\partial_if)_{i=1}^d$} is a vector field and
$Du=(\partial_ju_i)_{i,j=1}^d$ takes values in $\mathfrak M$. The
symmetric and the antisymmetric differentials of $u$,
respectively $D^su=\big((D^su)_{ij}\big)_{i,j=1}^d$ and
$D^au=\big((D^au)_{ij}\big)_{i,j=1}^d$ are defined by
\begin{equation*}
(D^su)_{ij}:=\tfrac12\(\partial_ju_i+\partial_iu_j\)\quad\mbox{and}\quad (D^au)_{ij}:=\tfrac12\(\partial_ju_i-\partial_iu_j\)
\end{equation*}
so that $D^su+D^au=Du$.

The orthogonal projection $\P$ of vector-valued functions is
defined as follows. Let $(A_{ij})_{1\le i<j\le d}$ be a basis of
$\mathfrak M^a$ whose elements are
\begin{equation*}
  A_{ij}=\big((\delta_{ij}-\delta_{ji})\,\delta_{ki}\,\delta_{j\ell}
  \big)_{k,\ell=1}^d
\end{equation*}
and $(R_{ij})_{1\le i<j\le d}$ the orthonormal (in the $L^2$
sense) basis of $\mathcal R$ given by
\begin{equation*}
  R_{ij}(x)=Z_{ij}^{-1}\,A_{ij}\,x
\end{equation*}
whose coordinates are all $0$ except the $i^{th}$ and the
$j^{th}$ ones, with respective values $-x_j/Z_{ij}$ and
$x_i/Z_{ij}$, {\em i.e.},
\begin{equation*}
  R_{ij}(x)^\perp=Z_{ij}^{-1}\,\big(0,\ldots,0,-x_j,0,
  \ldots,0,x_i,0,\ldots,0\big)\,,
\end{equation*}
and where the normalization constant is
$Z_{ij}=\(\int_{\R^d}(x_i^2+x_j^2)\,e^{-\phi(x)}\dd
x\)^{1/2}$. With these notations, $\P u$ is the vector field
\begin{equation*}
  x\mapsto\P u(x):=\sum_{1\le i<j\le d}\mathsf c_{ij}\,R_{ij}(x)
\end{equation*}
where the coefficients are computed, for all integers $i$, $j$
such that $1\le i<j\le d$, as
\begin{equation*}
  \textstyle\mathsf c_{ij}=
  \int_{\R^d}u(x)\cdot R_{ij}(x)\,e^{-\phi(x)}\dd x=
  \frac1{Z_{ij}}\int_{\R^d}\(x_i\,u_j(x)-x_j\,u_i(x)\)\,
  e^{-\phi(x)}\dd x\,.
\end{equation*}

The orthogonal projection $\fP$ of a matrix-valued function
$\mathfrak F$ is defined as
\begin{equation*}
  \fP\,\mathfrak F:=\sum_{1\le i<j\le d}\mathsf d_{ij}\,A_{ij}
\end{equation*}
where the coefficients are computed, for all integers $i$, $j$
such that $1\le i<j\le d$, as
\begin{equation*}
  \textstyle\mathsf d_{ij}=
  \frac12\int_{\R^d}\mathfrak F(x):A_{ij}\,e^{-\phi(x)}\dd x=
  \frac12\int_{\R^d}
  \big(\mathfrak F_{ij}(x)-\mathfrak F_{ji}(x)\big)\,
  e^{-\phi(x)}\dd x\,.
\end{equation*}
As a a consequence, we deduce that
$\fP\mathfrak F=\seq{\mathfrak F^a}$ and $\fP(Du) = \seq{D^a u}$
for any $u \in H^1$.

A matrix $A\in D\RR_\phi^c$ is such that $A\in\fM^a$ and for any
$B\in D\RR_\phi\subset\fM^a$,
\begin{equation*}
  \textstyle0=\int_{\R^d}A\,x\cdot B\,x\,e^{-\phi(x)}\dd x=
  \sum_{i,j,k=1}^dA_{ij}\,B_{ik}
  \int_{\R^d}x_j\,x_k\,e^{-\phi(x)}\dd x\,.
\end{equation*}
A matrix $A\in\fM_\phi^c$ is such that $A\in\fM^a$ and for any
$B\in D\RR_\phi=\fM_\phi\subset\fM^a$,
\begin{equation*}
  \textstyle0=\int_{\R^d}A:B\,e^{-\phi(x)}\dd x=
  A:B=\sum_{i,j}^dA_{ij}\,B_{ij}.
\end{equation*}
Based on these two definitions, it is clear that $D\RR_\phi^c$
and $\fM_\phi^c$ generically differ.

\subsection{Operators}
\label{Appendix:B4}

Let us give some details on the differential operators
$-\Delta_\phi$ and $-\Delta_S$ associated respectively with the
quadratic forms $f\mapsto\|\nabla f\|^2$ and
$u\mapsto\|D^s u\|^2$.
\smallskip

\noindent$\rhd$ Using
$\nablaphi u := \nabla\cdot u- \nabla \phi\cdot u$, the {\sl
  Witten-Laplace operator $\Delta_\phi$ on functions} is such
that $\|\nabla f\|^2=(f,-\Delta_\phi f)$ and takes the form
\begin{equation*}
  \Delta_\phi f= e^\phi\,\nabla\cdot\(\nabla f\,e^{-\phi}\)
  = \nablaphi \cdot \nabla f =
  \Delta f - \nabla \phi \cdot \nabla f\,.
\end{equation*}
\smallskip

\noindent$\rhd$ By definition of $D^su$ and using integration by
parts, we have
\begin{equation*}
  \begin{array}{rl}
    (-\Delta_S u,u)\kern-8pt
    & =2\int_{\R^d}|D^su|^2\,e^{-\phi}\dd
      x=\frac12\sum_{i,j=1}^d
      \int_{\R^d}\(\partial_iu_j+\partial_ju_i\)^2e^{-\phi}\dd
      x\\
    &=-\frac12\sum_{i,j=1}^d\int_{\R^d}u_j\,
      \partial_i\((\partial_iu_j+\partial_ju_i)\,e^{-\phi}\)\dd
      x
      -\frac12\sum_{i,j=1}^d\int_{\R^d}u_i\,
      \partial_j\((\partial_iu_j+\partial_ju_i)\,
      e^{-\phi}\)\dd x\\
    &=-\sum_{i,j=1}^d\int_{\R^d}u_j\,
      \partial_i\((\partial_iu_j+\partial_ju_i)\,
      e^{-\phi}\)\dd x\\
    &=-\sum_{i=1}^d\int_{\R^d}\big(u_i\,\Delta
      u_i+u_i\,\partial_{ij}u_j\big)\,e^{-\phi}\dd
      x
      +\sum_{i=1}^d\int_{\R^d}u_i\,
      \big((\nabla\phi\cdot\nabla)\,u_i
      +2\,(D^su\,\nabla\phi)_i\big)\,
      e^{-\phi}\dd x\\
    &=-\int_{\R^d}u\cdot\big(\Delta
      u+\nabla(\nabla\cdot
      u)-(\nabla\phi\cdot\nabla)\,u
      -2\,D^su\,\nabla\phi\big)\dd x\,.
  \end{array}
\end{equation*}
so that $-\Delta_S$ is given, for an arbitrary vector field
$u\in\ccc_c^\infty(\R^d;\R^d)$, by
\begin{equation*}
  -\Delta_S\,u=-\,D^s_\phi\cdot D^su=
  -\big(\Delta u+\nabla(\nabla\cdot u)
  -(\nabla\phi\cdot\nabla)\,u-2\,D^su\,\nabla\phi\big)\,.
\end{equation*}

\subsection{Gaussian measure}\label{Appendix:B5}

In the normalized centred Gaussian case
$\phi(x)=\frac12\,|x|^2+\frac d2\,\ln(2\pi)$ corresponding
to~\eqref{Gaussian}, the basic constants are \hbox{$\CP=1$}
(which is the optimal constant in the Gaussian Poincar\'e
inequality), either $C_\phi=1+4\,d$ and
$C_\phi'=4\,\sqrt{d\,(1+4\,d)}$ if $d\ge2$, or $C_\phi=8$ and
$C_\phi'=8\,\sqrt2$ if $d=1$, as a limit case.

Let $u(x)=(1-x_2^2,x_1\,x_2,0,\ldots0)^\perp$. By elementary
computations, we find that
\begin{equation*}
  Du=
  \begin{pmatrix}
    \begin{array}{cc}
      0&-\,2\,x_2\\
      x_2&x_1\end{array}&{\mathbf0}\\
    {\mathbf0}&{\mathbf0}
  \end{pmatrix},\quad
  D^su=
  \begin{pmatrix}
    \begin{array}{cc}0&-\,\frac12\,x_2\\
      -\,\frac12\,x_2&x_1\end{array}&{\mathbf0}\\
    {\mathbf0}&{\mathbf0}
  \end{pmatrix},\quad
  D^au=
  \begin{pmatrix}
    \begin{array}{cc}0&-\,\frac32\,x_2\\
      \frac32\,x_2&0\end{array}&{\mathbf0}\\
    {\mathbf0}&{\mathbf0}
  \end{pmatrix}
\end{equation*}
where $\mathbf0$ denotes $2\times(d-2)$, $(d-2)\times2$, and
$(d-2)\times(d-2)$ null matrices. After integration against the
normalized centred Gaussian measure, we have
\begin{equation*}
  \seq u=0,\quad\|u\|^2=3\,,\quad
  \P(u)=0\,,\quad\fP (Du)=0=\seq{D^au}\,,\quad
  \|D^su\|^2=\frac32\,,\quad\|D^au\|^2=\frac92\,,
  \quad\|Du\|^2=6\,.
\end{equation*}
This proves that $\CK=4$ in
$\|\nablaD u-\fP (Du)\|^2 \leq \CK\,\|D^s u\|^2$ and $\CPK=2$ in
$\|u-\seq{u} - \P (u)\|^2 \leq \CPK\,\|D^s u\|^2$ are both
optimal.

\subsection{Estimates for the
  \texorpdfstring{$D(\Lambda)$}{Dlambda}-Toolbox and
  consequences}
\label{Sec:Dlambda}

Here we give some details on the computation of $\CB$ in the
proof of Proposition~\ref{prop:domain} in
Section~\ref{Sec:Toolboxes}. Let $A=\big\|\wgt\,\nabla f\big\|$,
$B=\big\|\wgt^2\,f\big\|$, $Z=\|\xi\|$ and let
${c'}^2=C_\phi'\ge C_\phi=c^2$. Inequalities~\eqref{DVDu}
and~\eqref{DVDubis} amount to
\begin{equation*}
  B\leq\tfrac43\,c\,A+\tfrac53\,{c'}^2\,Z\quad\mbox{and} \quad
  A^2\leq B\,Z+\tfrac1{2\,c}\,A\,B+\tfrac{{c'}^2}{2\,c}\,A\,Z\,.
\end{equation*}
Taking the equality case in the first inequality, we find that
\begin{equation*}
  A^2-4\,A\,\big(c+{c'}^2/c\big)\,Z-5\,{c'}^2\,Z^2\le0
\end{equation*}
which means that
\begin{equation*}
  \textstyle A\le\(2\,\sigma+\sqrt{4\,\sigma^2+5}\,\)c'\,Z
  \quad\mbox{with}\quad\sigma=\frac c{c'}+\frac{c'}c\ge2\,.
\end{equation*}
On $[2,+\infty)$, the function
$\sigma\mapsto\sqrt{4\,\sigma^2+5}/\sigma$ is monotone
non-increasing, so that
$\sqrt{4\,\sigma^2+5}\le\frac12\,\sqrt{21}\,\sigma$. Using the
monotonicity of $c\mapsto c\,\sigma$ and $c\le c'$, we also have
$c\,\sigma\le2\,c'$. As a consequence, we have
\begin{align*}
  &\textstyle A\le\(2+\tfrac12\,\sqrt{21}\,\)c\,\sigma\,
    \frac{c'}c\,Z\le\(4+\sqrt{21}\,\)\frac{{c'}^2}c\,Z
    \le 9\,\frac{{c'}^2}c\,Z\,,\\
  &\textstyle B
    \le\frac13\(4\,c\(2\,\sigma
    +\sqrt{4\,\sigma^2+5}\,\)+5\,c'\)c'\,Z\le\(7+4\,\sqrt{7/3}\,\)
    \le14\,{c'}^2\,Z\,,
\end{align*}
that is, the bounds~\eqref{Est:4}. Moreover, from
$\big\|D^2f\big\|^2\le\tfrac
Cd+\tfrac12\(d+\sqrt{d^2+4\,C}\,\)\|\xi\|^2$ with
$C=\tfrac{277}8\(C_\phi-1\){C_\phi'}^2$, we deduce that
\begin{equation}
  \label{Eq:CB}
  \CB=81\,\tfrac{{C_\phi'}^2}{C_\phi}
  +196\,{C_\phi'}^2+\tfrac Cd+\tfrac12\(d+\sqrt{d^2+4\,C}\,\).
\end{equation}

\subsection{Estimates on various constants}
\label{Sec:Constants}

The constants $\Cphi$ and $\Cphi'$ appear in~\eqref{eq:cphi} as a
consequence of~\eqref{hyp:regularity} while the Poincar\'e
constant $\CP$ follows from
Assumption~\eqref{hyp:poincarebasique}. According to~\eqref{CPL},
the constant in the Poincar\'e-Lions
inequality~\eqref{eq:poincarelions} is given with~$\CB$ as
in~\eqref{Eq:CB} by
$\CPL=(1+\CP)^2\,\big(1+\Cphi'\,\sqrt{\CB}/4\big)^2$. From
Proposition~\ref{prop:poincares}, we know that the strong
Poincar\'e inequality~\eqref{eq:strongpoincare} holds for some
$\CSP\leq\Cphi\,(1+\CP)$. As for the other constants in
Theorems~\ref{theo:KPK} and~\ref{theo:PKPK}, we learn from the
proofs in Sections~\ref{Sec:Prf1} and~\ref{Sec:Prf2} that
\begin{align*}
  &\CK\leq1+4\,\CPL\,,\quad\CPK\leq\CP\,\CK\,,
    \quad\CSPK\leq\CSP(\CK+3\,\Cphi\,\CPK)\,,\\
  &\CPK'\leq\CPK+2\,\CRV\,\CSPK\quad\mbox{and}\quad
    \CK'\leq\CK(1+2\,\CRD\,\CSP)\,.
\end{align*}
In Section~\ref{Sec:6.1}, Lemma~\ref{LemPL-1}, using
$\Cphi'':=C_\eps$ as in~\eqref{hyp:regularity} with
$\eps=1/(2\,\sqrt{\CB})$, the constants
in~\eqref{eq:poincarelions-1} are
\begin{equation*}
  \CLPL \leq 4\,(1+\CP)^2\,(1+\Cphi'')^2\quad\mbox{and}\quad
  \CRPL\leq
  \(1+\tfrac14\,\Cphi'\,{\textstyle\sqrt{\CB/C_\phi}}\,\)^2\,.
\end{equation*}
Finally, the constants in Theorem~\ref{theo:KPK0} are given by
\begin{equation*}
  \CKZ=1+4\,\CLPL\,,\quad\CPKZ \leq \CPL\,(1+4\,\CLPL)
  \quad\mbox{and}\quad\CPKZ' \leq \CPKZ(1+2\,\CRVZ\,\Cphi).
\end{equation*}

\section*{Acknowledgments} \noindent\quad \small
This work has been partially supported by the Projects EFI (K.C.,
J.D., ANR-17-CE40-0030) and Kibord (K.C., J.D., S.M.,
ANR-13-BS01-0004) of the French National Research Agency
(ANR). C.M.~and S.M.~acknowledge partial funding by the ERC
grants MATKIT 2011-2016 and MAFRAN 2017-2022. Moreover C.M.~is
very grateful for the hospitality at Universit\'e Paris-Dauphine.\\
\noindent{\scriptsize\copyright\,2020 by the authors. This paper
  may be reproduced, in its entirety, for non-commercial
  purposes.}

\begin{thebibliography}{10}

\bibitem{Agm82}
{\sc Agmon, S.}
\newblock {\em Lectures on exponential decay of solutions of second-order
  elliptic equations: bounds on eigenfunctions of {$N$}-body {S}chr\"{o}dinger
  operators}, vol.~29 of {\em Mathematical Notes}.
\newblock Princeton University Press, Princeton, NJ; University of Tokyo Press,
  Tokyo, 1982.

\bibitem{AS00}
{\sc Ahmed, Z.~M., and Stroock, D.~W.}
\newblock A {H}odge theory for some non-compact manifolds.
\newblock {\em J. Differential Geom. 54}, 1 (2000), 177--225.

\bibitem{Bakry-Emery}
{\sc Bakry, D., and \'{E}mery, M.}
\newblock Diffusions hypercontractives.
\newblock In {\em S\'{e}minaire de probabilit\'{e}s, {XIX}, 1983/84}, vol.~1123
  of {\em Lecture Notes in Math.} Springer, Berlin, 1985, pp.~177--206.

\bibitem{MR3815207}
{\sc Bardos, C., Golse, F., Nguyen, T.~T., and Sentis, R.}
\newblock The {M}axwell-{B}oltzmann approximation for ion kinetic modeling.
\newblock {\em Phys. D 376/377\/} (2018), 94--107.

\bibitem{MR3570353}
{\sc Bauer, S., and Pauly, D.}
\newblock On {K}orn's first inequality for mixed tangential and normal boundary
  conditions on bounded {L}ipschitz domains in {$\mathbb{R}^N$}.
\newblock {\em Ann. Univ. Ferrara Sez. VII Sci. Mat. 62}, 2 (2016), 173--188.

\bibitem{MR3582590}
{\sc Bauer, S., and Pauly, D.}
\newblock On {K}orn's first inequality for tangential or normal boundary
  conditions with explicit constants.
\newblock {\em Math. Methods Appl. Sci. 39}, 18 (2016), 5695--5704.

\bibitem{CDHMMShypo}
{\sc Carrapatoso, K., Dolbeault, J., H\'erau, F., Mischler, S., Mouhot, C., and
  Schmeiser, C.}
\newblock Special modes and hypocoercivity for linear kinetic equations with
  full local conservations laws and a confining potential.
\newblock In preparation.

\bibitem{MR936420}
{\sc Ciarlet, P.~G.}
\newblock {\em Mathematical elasticity. {V}ol. {I}}, vol.~20 of {\em Studies in
  Mathematics and its Applications}.
\newblock North-Holland Publishing Co., Amsterdam, 1988.

\bibitem{Cia13}
{\sc Ciarlet, P.~G.}
\newblock {\em Linear and nonlinear functional analysis with applications}.
\newblock Society for Industrial and Applied Mathematics, Philadelphia, PA,
  2013.

\bibitem{Courtade_2020}
{\sc Courtade, T.~A., and Fathi, M.}
\newblock {Stability of the Bakry-{\'{E}}mery theorem on $\mathbb R^n$}.
\newblock {\em Journal of Functional Analysis 279}, 2 (aug 2020), 108523.

\bibitem{CFKS87}
{\sc Cycon, H.~L., Froese, R.~G., Kirsch, W., and Simon, B.}
\newblock {\em Schr\"{o}dinger operators with application to quantum mechanics
  and global geometry}.
\newblock Texts and Monographs in Physics. Springer-Verlag, Berlin, 1987.

\bibitem{DV02}
{\sc Desvillettes, L., and Villani, C.}
\newblock On a variant of {K}orn's inequality arising in statistical mechanics.
\newblock {\em ESAIM Control Optim. Calc. Var. 8\/} (2002), 603--619
  (electronic).
\newblock A tribute to J. L. Lions.

\bibitem{Desvillettes-Villani-2005}
{\sc Desvillettes, L., and Villani, C.}
\newblock On the trend to global equilibrium for spatially inhomogeneous
  kinetic systems: the {B}oltzmann equation.
\newblock {\em Invent. Math. 159}, 2 (2005), 245--316.

\bibitem{Dolbeault_2012}
{\sc Dolbeault, J., and Volzone, B.}
\newblock Improved poincar{\'{e}} inequalities.
\newblock {\em Nonlinear Analysis: Theory, Methods {\&} Applications 75}, 16
  (nov 2012), 5985--6001.

\bibitem{Duan_2011}
{\sc Duan, R.}
\newblock Hypocoercivity of linear degenerately dissipative kinetic equations.
\newblock {\em Nonlinearity 24}, 8 (Jun 2011), 2165--2189.

\bibitem{MR0521262}
{\sc Duvaut, G., and Lions, J.-L.}
\newblock {\em Inequalities in mechanics and physics}.
\newblock Springer-Verlag, Berlin-New York, 1976.
\newblock Translated from the French by C. W. John, Grundlehren der
  Mathematischen Wissenschaften, 219.

\bibitem{MR913672}
{\sc Escobedo, M., and Kavian, O.}
\newblock Variational problems related to self-similar solutions of the heat
  equation.
\newblock {\em Nonlinear Anal. 11}, 10 (1987), 1103--1133.

\bibitem{MR2542573}
{\sc Figalli, A.}
\newblock A geometric lower bound on {G}rad's number.
\newblock {\em ESAIM Control Optim. Calc. Var. 15}, 3 (2009), 569--575.

\bibitem{MR0022750}
{\sc Friedrichs, K.~O.}
\newblock On the boundary-value problems of the theory of elasticity and
  {K}orn's inequality.
\newblock {\em Ann. of Math. (2) 48\/} (1947), 441--471.

\bibitem{Grad65}
{\sc Grad, H.}
\newblock On {B}oltzmann's {$H$}-theorem.
\newblock {\em J. Soc. Indust. Appl. Math. 13\/} (1965), 259--277.

\bibitem{HN05}
{\sc Helffer, B., and Nier, F.}
\newblock {\em Hypoelliptic estimates and spectral theory for {F}okker-{P}lanck
  operators and {W}itten {L}aplacians}, vol.~1862 of {\em Lecture Notes in
  Mathematics}.
\newblock Springer-Verlag, Berlin, 2005.

\bibitem{HS94}
{\sc Helffer, B., and Sj\"{o}strand, J.}
\newblock On the correlation for {K}ac-like models in the convex case.
\newblock {\em J. Statist. Phys. 74}, 1-2 (1994), 349--409.

\bibitem{HN04}
{\sc H\'{e}rau, F., and Nier, F.}
\newblock Isotropic hypoellipticity and trend to equilibrium for the
  {F}okker-{P}lanck equation with a high-degree potential.
\newblock {\em Arch. Ration. Mech. Anal. 171}, 2 (2004), 151--218.

\bibitem{MR1368384}
{\sc Horgan, C.~O.}
\newblock Korn's inequalities and their applications in continuum mechanics.
\newblock {\em SIAM Rev. 37}, 4 (1995), 491--511.

\bibitem{Joh00}
{\sc Johnsen, J.}
\newblock On the spectral properties of {W}itten-{L}aplacians, their range
  projections and {B}rascamp-{L}ieb's inequality.
\newblock {\em Integral Equations Operator Theory 36}, 3 (2000), 288--324.

\bibitem{Kat72}
{\sc Kato, T.}
\newblock Schr\"{o}dinger operators with singular potentials.
\newblock {\em Israel J. Math. 13\/} (1972), 135--148 (1973).

\bibitem{MR995908}
{\sc Kondratiev, V.~A., and Oleinik, O.~A.}
\newblock On {K}orn's inequalities.
\newblock {\em C. R. Acad. Sci. Paris S\'{e}r. I Math. 308}, 16 (1989),
  483--487.

\bibitem{Kor06}
{\sc Korn, A.}
\newblock Die {E}igenschwingungen eines elastischen {K}{\"o}rpers mit ruhender
  {O}berfl{\"a}che.
\newblock {\em Akad. der Wissensch., Munich, Math. phys. KI. 36\/} (1906), 351.

\bibitem{Kor08}
{\sc Korn, A.}
\newblock Solution g\'en\'erale du probl\`eme d'\'equilibre dans la th\'eorie
  de l'\'elasticit\'e, dans le cas ou les efforts sont donn\'es \`a la surface.
\newblock {\em Annales de la Facult\'e des sciences de Toulouse :
  Math\'ematiques {2i\`eme} S{\'e}rie, 10\/} (1908), 165--269.

\bibitem{Kor09}
{\sc {Korn}, A.}
\newblock {\"Uber einige Ungleichungen, welche in der Theorie der elastischen
  und elektrischen Schwingungen eine Rolle spielen}.
\newblock {\em Krak. Anz.\/} (1909), 705--724.

\bibitem{Led01}
{\sc Ledoux, M.}
\newblock {\em The concentration of measure phenomenon}, vol.~89 of {\em
  Mathematical Surveys and Monographs}.
\newblock American Mathematical Society, Providence, RI, 2001.

\bibitem{MR3498171}
{\sc Lewicka, M., and M\"{u}ller, S.}
\newblock On the optimal constants in {K}orn's and geometric rigidity
  estimates, in bounded and unbounded domains, under {N}eumann boundary
  conditions.
\newblock {\em Indiana Univ. Math. J. 65}, 2 (2016), 377--397.

\bibitem{MR2746437}
{\sc Mouhot, C., Russ, E., and Sire, Y.}
\newblock Fractional {P}oincar\'{e} inequalities for general measures.
\newblock {\em J. Math. Pures Appl. (9) 95}, 1 (2011), 72--84.

\bibitem{MR3294348}
{\sc Neff, P., Pauly, D., and Witsch, K.-J.}
\newblock Poincar\'{e} meets {K}orn via {M}axwell: extending {K}orn's first
  inequality to incompatible tensor fields.
\newblock {\em J. Differential Equations 258}, 4 (2015), 1267--1302.

\bibitem{MR631678}
{\sc Nitsche, J.~A.}
\newblock On {K}orn's second inequality.
\newblock {\em RAIRO Anal. Num\'{e}r. 15}, 3 (1981), 237--248.

\bibitem{Per60}
{\sc Persson, A.}
\newblock Bounds for the discrete part of the spectrum of a semi-bounded
  {S}chr\"{o}dinger operator.
\newblock {\em Math. Scand. 8\/} (1960), 143--153.

\bibitem{RS75}
{\sc Reed, M., and Simon, B.}
\newblock {\em Methods of modern mathematical physics. {II}. {F}ourier
  analysis, self-adjointness}.
\newblock Academic Press [Harcourt Brace Jovanovich, Publishers], New
  York-London, 1975.

\bibitem{Sim78}
{\sc Simader, C.~G.}
\newblock Essential self-adjointness of {S}chr\"{o}dinger operators bounded
  from below.
\newblock {\em Math. Z. 159}, 1 (1978), 47--50.

\bibitem{Sjo96}
{\sc Sj\"{o}strand, J.}
\newblock Correlation asymptotics and {W}itten {L}aplacians.
\newblock {\em Algebra i Analiz 8}, 1 (1996), 160--191.

\bibitem{Wit82}
{\sc Witten, E.}
\newblock Supersymmetry and {M}orse theory.
\newblock {\em J. Differential Geometry 17}, 4 (1982), 661--692 (1983).

\end{thebibliography}

\bigskip\begin{center}\rule{2cm}{0.5pt}\end{center}
\end{document}